\renewcommand*\libertine@figurestyle{LF}
\renewcommand*\libertine@figurestyle{OsF}
\theoremstyle{plain}
    \newtheorem{theorem}{Theorem}[section]
    \newtheorem{construction/theorem}[theorem]{Construction/Theorem}
    \newtheorem{corollary}[theorem]{Corollary}
    \newtheorem{proposition}[theorem]{Proposition}
\theoremstyle{definition}
    \newtheorem{remark}[theorem]{Remark}
    \newtheorem{example}[theorem]{Example}
    \newtheorem{definition}[theorem]{Definition}
\newcommand{\comment}[1]{}
\title{On the piecewise quasipolynomiality of double tropical Welschinger invariants}
\author[V.~Reda]{Vincenzo Reda}
\address{V.~Reda: School of Mathematics, 17 Westland Row, Trinity College Dublin, Dublin 2, Ireland}
\email{redav@tcd.ie}
\subjclass[2020]{14N10,14T90,52B05}
\keywords{Floor diagrams, Welschinger invariants, Ehrhart theory, tropical geometry}
\begin{document}

\begin{abstract}
    In \cite{ardila2017double}, Ardila and Brugallé conjectured that \textit{double tropical Welschinger invariants} of Hirzebruch surfaces are piecewise quasipolynomial. In this work, we prove the conjecture holds in a more general setting, i.e. for toric surfaces corresponding to $h$-transverse polygons. Furthermore, we define new combinatorial Welschinger-type numbers for $h$-transverse polygons and show that they are likewise piecewise quasipolynomial.
\end{abstract}

\keywords{Floor diagrams, Welschinger invariants, Ehrhart theory, tropical geometry}

\maketitle

\tableofcontents

\section{Introduction}\label{sec:introduction}

Let $P$ be a convex polygon and $\Sigma$ be the toric surface associated to $P$. Let $g$ be a nonnegative integer and $\omega$ a configuration of $|\partial P\cap\mathbb Z^2|+g-1$ points in $\Sigma$. The number of curves defined over $\mathbb C$ that lie in $\Sigma$ and have genus $g$, fixed degree and pass through the points in $\omega$ is finite. Moreover, this number does not depend on $\omega$ as long as it is a generic configuration. The situation changes when we count real curves in $\Sigma$. Indeed, in this case the result heavily depends on $\omega$. However, in \cite{welschinger2003invariants,welschinger2005invariants}, Welschinger introduced real invariants, nowadays called \textit{Welschinger invariants}. Under suitable hypothesis on the surface $\Sigma$, these invariants are defined as weighted counts of real curves of genus $0$ and fixed degree in $\Sigma$ passing through a conjugation-invariant configuration of points, and depend only on the number of real points in the configuration. The weight of the curve is $\pm1$ and it is determined by the number of \textit{solitary nodes} of the curve, which are singularities that are locally given by the equation $x^2+y^2=0$.

In groundbreaking work \cite{mikhalkin2005enumerative}, Mikhalkin uses tropical geometry to count real curves of any genus $g\geq0$ in toric surfaces. Later on, in \cite{itenberg2009caporaso}, the authors define new tropical Welschinger numbers for relative constraints and any genus. They prove that these numbers are actually \textit{invariants under deformation} that coincide with the classical Welschinger invariants in the case in which the set of relative constraints is empty and the surface is del Pezzo. The result is surprising since these numbers are invariants only in the tropical world and no lift to the algebraic world is known. This work provides the motivation for the problem we introduce in the following sections.

For our purposes, we consider real plane parametrized tropical curves with marked points, see \cite[Definition 4.11]{brugalle2008floor}. These are tropical curves endowed with an involution and a parametrization such that their real part is defined as the set of points fixed by the involution and the complex part is the complement of the real part. The involution is also allowed to permute the marked points. In this paper, we refer to real plane parametrized tropical curves simply as real tropical curves and consider additional tangency conditions, as will be made precise later.

\subsection{Double tropical Welschinger invariants}\label{subsec:double_invariants}

Inspired by the suggestions outlined in \cite[Section 7]{ardila2017double} and following \cite{itenberg2009caporaso}, this section introduces the enumerative problem that forms the central subject of this paper. To this end, recall the notion of \textit{$h$-transverse polygons}.
\begin{definition}\label{def-h-transverse_polygons}
    Let $P\subset\mathbb R^2$ be a convex polygon. We say that $P$ is an \textbf{$h$-transverse polygon} if each side of $P$ has slope $0,\infty$ or $\frac1k$, where $k\in\mathbb Z$.
\end{definition}
Here, we consider only $h$-transverse polygons that have two nondegenerate parallel sides, i.e. sides that have non-zero lattice length. We refer to these sides as top and bottom side. A feature of these polygons is that they come with a parametrisation. More precisely, if $P$ is an $h$-transverse polygon, we can store the inverse of the slope of its right sides in the vector $\textbf{c}^r=(c_1^r,\dots,c_n^r)\in\mathbb Z^n$ such that $c_1^r>\dots>c_n^r$. Similarly, we store the inverse of the slope of its left sides in the vector $\textbf{c}^l=(c_1^l,\dots,c_m^l)\in\mathbb Z^m$ such that $c_1^l<\dots<c_m^l$. Moreover, let $d^t,d^b$ the lattice lengths of the top and bottom sides respectively and $d_i^r,d_j^l$ the lattice lengths of the right side corresponding to $c_i^r$ and the left side corresponding to $c_j^l$. We can completely determine $P$ using the vectors $\textbf{c}=(\textbf{c}^r;\textbf{c}^l)$ and $\textbf{d}=(d^t;\textbf{d}^r;\textbf{d}^l)$, where $\textbf{d}^r=(d_1^r,\dots,d_n^r)\in(\mathbb Z_{>0})^n$ and $\textbf{d}^l=(d_1^l,\dots,d_m^l)\in(\mathbb Z_{>0})^m$. We write $P=P(\textbf{c},\textbf{d})$. Note that $d^b$ is completely determined by $(\textbf{c},\textbf{d})$ since the normal fan to $P$ is balanced. In particular, the height of the polygon equals $\sum_{i=1}^nd_i^r=\sum_{j=1}^md_j^l$ and is denoted by $a$.

The next step is to extend the plane $\mathbb R^2$. Let $\mathbb T=\mathbb R\cup\{-\infty\}$ endowed with the topology that makes it homeomorphic to $[0,+\infty)$ via the logarithmic function and the binary operations
\begin{equation}
    x\oplus y=\max\{x,y\},\qquad x\odot y=x+y.
\end{equation}
Note that $0_{\mathbb T}=-\infty$, $1_{\mathbb T}=0$ and $x\odot y^{-1}$ is the usual difference $x-y$, for $y\neq-\infty$. $\mathbb T$ is called the \textbf{tropical semifield}.
\begin{definition}\label{def-tropical_projective_space}
    The \textbf{tropical projective space} $\mathbb T\mathbb P^n$ is the quotient space $(\mathbb T^{n+1}\setminus\{(-\infty,\dots,-\infty)\})/\sim$, where $(x_0,\dots,x_n)\sim (y_0,\dots,y_n)$ if and only if there exists $\lambda\in\mathbb T\setminus\{-\infty\}$ such that $x_i=y_i+\lambda$ for all $i=0,\dots,n$. As usual, we can define affine maps $\phi_i:\mathbb T\mathbb P^n\setminus\{x_i\neq-\infty\}\to\mathbb T^n$ by taking the tropical division by $x_i$. 
\end{definition}
Consider the extended plane $\hat{\mathbb R}^2=\mathbb R\times\mathbb T\mathbb P^1$ and correspondingly extend any tropical curve by attaching on $\mathsf B=\mathbb R\times\{-\infty\}\subset\mathbb R\times\phi_0(\mathbb T\mathbb P^1\setminus\{x_0\neq-\infty\})$ and $\mathsf T=\mathbb R\times\{-\infty\}\subset\mathbb R\times\phi_1(\mathbb T\mathbb P^1\setminus\{x_1\neq-\infty\})$ a vertex to each vertically directed end of the curve.\\
Let us denote by $\sigma_t$ and $\sigma_b$ the top and bottom sides of the $h$-transverse polygon $P(\textbf{c},\textbf{d})$. Let $\alpha,\beta,\tilde\alpha,\tilde\beta$ be four sequences of nonnegative integer numbers such that only finitely many terms are non-zero and
\begin{equation}
    \sum_ii(\alpha_i+\beta_i)=|\sigma_b\cap\mathbb Z^2|-1,\qquad\sum_ii(\tilde\alpha_i+\tilde\beta_i)=|\sigma_t\cap\mathbb Z^2|-1.
\end{equation}
Finally, let $l=2a+g+\sum_{i\ge1}(\beta_i+\tilde{\beta}_i)-1$.
\begin{definition}\label{def-DTW}
In the set-up above, consider a tropically generic configuration (in the sense of \cite[Section 4]{itenberg2009caporaso})
\begin{equation}
    \omega=\bigcup_{i\geq0}\left((q_j^i)_{1\leq j\leq\alpha_i}\cup(\tilde q_j^i)_{1\leq j\leq\tilde\alpha_i}\right)\cup(p_j)_{1\leq j\leq l},
\end{equation}
where $q_j^i\in \mathsf B,\tilde{q}_j^i\in \mathsf T$ and $p_j\in\mathbb R^2$. $W_{\textbf c,g}^{\alpha,\beta,\tilde\alpha,\tilde\beta}(\textbf{d})$ denotes the number of nodal irreducible real tropical curves $T$ counted with the multiplicity defined in \cite[Section 3]{itenberg2009caporaso} such that
    \begin{itemize}
        \item $T$ has Newton polygon $P(\textbf{c},\textbf{d})$ and is of genus $g$;
        \item all the non-vertical ends have weight $1$;
        \item the number of negatively directed vertical ends is equal to $\sum(\alpha_i+\beta_i)$ and the number of positively directed vertical ends is equal to $\sum(\tilde\alpha_i+\tilde\beta_i)$;
        \item $T$ passes through all the points in $\omega$, in particular the points $q_j^i$ are contained in a negatively directed vertical end of weight $i$ and the points $\tilde q_j^i$ are contained in a positively directed vertical end of weight $i$.
    \end{itemize}
\end{definition}
This number is finite and does not depend on the chosen generic configuration of points \cite{itenberg2009caporaso}. We call this number a \textbf{double tropical Welschinger invariant} of $P(\textbf{c},\textbf{d})$. When $\alpha=\beta=\tilde\alpha=\tilde\beta=0$, $g=0$ and the surface is del Pezzo we recover the classical Welschinger invariants.\\
In \cite{ardila2017double}, the authors propose that double tropical Welschinger invariants of Hirzebruch surfaces are expected to be piecewise quasipolynomial relative to the chambers of a certain hyperplane arrangement. This paper aims to answer this question affirmatively, not only in the case of Hirzebruch surfaces, but also for the wider family of $h$-transverse polygons. Additionally, we define new numbers that coincide with double tropical Welschinger invariants in certain circumstances and we prove that they are piecewise quasipolynomial too.

\subsection{Techniques}\label{subsec:techniques}

The task of counting tropical curves is often replaced by counting \textit{floor diagrams}. Floor diagrams are decorated graphs obtained from a degeneration process of a tropical curve. They are introduced in \cite{brugalle2007enumeration,brugalle2008floor,brugalle2016floor} and extensively used to solve enumerative problems. In \cite{ardila2013universal}, floor diagrams are employed to prove polynomiality of Severi degrees. In \cite{ardila2017double} and \cite{hahn2024universal} they play a central role in the study of piecewise polynomial behavior of double Gromov--Witten invariants.\\
In \cite{arroyo2011recursive}, a definition of real floor diagrams is provided to deal with relative tropical Welschinger invariants and prove, in a purely combinatorial way, the Caporaso--Harris type formula presented in \cite{itenberg2009caporaso}. In this work, we use the real version of these graphs suitably adapted to our case. Then, we prove \cref{thm-correspondence} which ensures that the floor diagrams count equals the tropical curves count.\\
The key idea to study the piecewise quasipolynomiality is to interpret the combinatorial problem in terms of weighted enumeration of lattice points in flow polytopes, and use non-trivial techniques from Ehrhart Theory. The main difficulty we face is that the lattice points in flow polytopes are weighted by an \textit{Ehrhart quasipolynomial} (see \cref{def-Ehrhartqp}) instead of a polynomial, which makes the problem harder than expected. In order to solve this issue, we use a theorem proved in \cite{de2024sums} to make the weighted sum of lattice points of flow polytopes an unweighted sum of lattice points of higher dimensional polytopes.

\subsection{Results}\label{subsec:results}

The primary objective of this paper is to prove the quasipolynomial behavior of the invariants $W_{\textbf c,g}^{\alpha,\beta,\tilde\alpha,\tilde\beta}(\textbf{d})$ defined in \cref{subsec:double_invariants}. To this end, we aim to construct a map $G_{(\textbf{d}^r;\textbf{d}^l),\textbf{c},g}^{n_1,n_2}(\textbf{x},\textbf{y})$ that encodes these numbers. Therefore, we define the set
\begin{equation}
    \Lambda=\left\{(x_1,\dots,x_{n_1},y_1,\dots,y_{n_2})\in\mathbb{Z}^{n_1}\times\mathbb{Z}^{n_2}\bigg|\sum_{i=1}^{n_1}x_i+\sum_{j=1}^{n_2}y_j+\sum_{i=1}^nc_i^rd_i^r-\sum_{j=1}^mc_j^ld_j^l=0\right\}
\end{equation}
where $n_1,n_2\geq0$ are integers. We associate a vector of sequences $(\alpha,\beta,\tilde\alpha,\tilde\beta)$ to $(\textbf{x},\textbf{y})\in\Lambda$ in the following way:
\begin{multicols}{2}
    \begin{itemize}
        \item $\alpha_i$ is the number of entries $x_j=-i$.
        \item $\beta_i$ is the number of entries $y_j=-i$.
        \item $\tilde\alpha_i$ is the number of entries $x_j=i$.
        \item $\tilde\beta_i$ is the number of entries $y_j=i$.
    \end{itemize}
\end{multicols}
Note that $(\alpha,\beta,\tilde\alpha,\tilde\beta)$ and $(\textbf{x},\textbf{y})$ determine each other up to permutation of the entries of $(\textbf{x},\textbf{y})$. Let us fix $g\in\mathbb Z_{\geq0}$, $\textbf{c}=(\textbf{c}^r;\textbf{c}^l)\in\mathbb Z^{n+m}$ such that $c_1^r>\dots>c_n^r$ and $c_1^l<\dots<c_m^l$ and $(\textbf{d}^r;\textbf{d}^l)\in(\mathbb Z_{>0})^{n+m}$, then we define the map
\begin{equation}
    G_{(\textbf{d}^r;\textbf{d}^l),\textbf{c},g}^{n_1,n_2}:\Lambda\longrightarrow\mathbb Z\qquad G_{(\textbf{d}^r;\textbf{d}^l),\textbf{c},g}^{n_1,n_2}(\textbf{x},\textbf{y})=W_{\textbf{c},g}^{\alpha,\beta,\tilde\alpha,\tilde\beta}(\textbf{d}).
\end{equation}
Moreover, if we let the vector $\textbf{c}$ vary, we can define the following map
\begin{equation}
    G_{(\textbf{d}^r;\textbf{d}^l),g}^{n_1,n_2}:\Lambda\times\mathbb Z^{n+m}\longrightarrow\mathbb Z\qquad G_{(\textbf{d}^r;\textbf{d}^l),g}^{n_1,n_2}(\textbf{x},\textbf{y},\textbf{c})=W_{\textbf{c},g}^{\alpha,\beta,\tilde\alpha,\tilde\beta}(\textbf{d}).
\end{equation}
Consider the hyperplane arrangement in $\Lambda$ defined by the following equations
\begin{equation}\label{eq-hyperplanes1}
    \sum_{i\in S}x_i+\sum_{j\in T}y_j+\sum_{i=1}^nk_ic_i^r-\sum_{j=1}^mt_jc_j^l=0
\end{equation}
\begin{equation}\label{eq-hyperplanes2}
    y_i-y_j=0\qquad 1\leq i<j\leq n_2 
\end{equation}
where $S\subseteq[n_1]$, $T\subseteq[n_2]$, $0\leq k_i\leq d_i^r$ and $0\leq t_j\leq d_j^l$ for all $i=1,\dots,n$ and $j=1,\dots,m$ and call it $\mathcal H^{n_1,n_2}(\textbf{c})$. Furthermore, we define $\tilde{\mathcal{H}}^{n_1,n_2}\subset\Lambda\times\mathbb Z^{n+m}$ to be the intersection of $\mathcal{H}^{n_1,n_2}(\textbf{c})\subset\Lambda\times\mathbb Z^{n+m}$ and $\{\textbf{c}=(\textbf{c}^r;\textbf{c}^l)\in\mathbb Z^{n+m}|c_1^r>\dots>c_n^r,\>c_1^l<\dots<c_m^l\}$. $\tilde{\mathcal{H}}^{n_1,n_2}$ is a hyperplane arrangement in $\Lambda\times\mathbb Z^{n+m}$. The following is the main result of this paper.
\begin{theorem}\label{thm-quasipolynomiality}
    Let $(\textbf{d}^r;\textbf{d}^l)\in\mathbb Z^{n+m}$ be a vector with positive integer coordinates and $g\geq0,n_1,n_2>0$ fixed integers. Then the map $G_{(\textbf{d}^r;\textbf{d}^l),g}^{n_1,n_2}(\textbf{x},\textbf{y},\textbf{c})$ is piecewise quasipolynomial in each chamber induced by $\tilde{\mathcal{H}}^{n_1,n_2}$ and each non-zero polynomial piece has degree $g$.
\end{theorem}

As a direct consequence we have the following

\begin{corollary}\label{cor-quasipolynomiality}
    Let $(\textbf{d}^r;\textbf{d}^l)\in\mathbb Z^{n+m}$ be a vector with positive integer coordinates and $g\geq0,n_1,n_2>0$ fixed integers and $\textbf c=(\textbf c^r;\textbf c^l)\in\mathbb Z^{n+m}$ such that $c_1^r>\dots>c_n^r$ and $c_1^l<\dots<c_m^l$. The map $G_{(\textbf d^r;\textbf d^l),\textbf c,g}^{n_1,n_2}(\textbf x,\textbf y)$ of double tropical Welschinger invariants is quasipolynomial in each chamber of $\Lambda\backslash\mathcal{H}^{n_1,n_2}(\textbf{c})$.
\end{corollary}

In the last section, we introduce new numbers defined via $s$-real multiplicity of real floor diagrams (see \cref{sec:real_diagrams} for a detailed discussion). This multiplicity depends on the imaginary part of the floor diagram and allows us to extend the enumerative framework beyond the totally real case. The new numbers coincide with the double tropical Welschinger invariants when the imaginary part is empty. As before, we encode these numbers into a map $G_{(\textbf{d}^r;\textbf{d}^l),\textbf{c},g,s}^{m_1,n_2,m_3}(\textbf{x},\textbf{y},\textbf{z},\textbf{w})$ and prove an analog of \cref{thm-quasipolynomiality}. Here, the variables $\textbf{z},\textbf{w}$ take into account the information from the imaginary part.

\subsection{Organization}\label{subsec:organization}

The paper is structured as follows: in \cref{sec:preliminaries} we introduce the definition of real floor diagrams and provide a proof of \cref{thm-correspondence}. Moreover, we give an overview on some results in weighted Ehrhart theory and weighted vector partition functions. In \cref{sec:pqp}, we prove \cref{thm-quasipolynomiality} and compute explicitly the maps $G_{(\textbf{d}^r;\textbf{d}^l),\textbf{c},g}^{n_1,n_2}(\textbf{x},\textbf{y})$ and $G_{(\textbf{d}^r;\textbf{d}^l),g}^{n_1,n_2}(\textbf{x},\textbf{y},\textbf{c})$ in an example. In \cref{sec:combinatorial_game}, we define new numbers using the multiplicity of floor diagrams. Then, we state and prove the main theorem on the piecewise quasipolynomiality of these numbers. Finally, we provide an example to illustrate the result.

\subsection{Acknowledgements}
The author would like to thank T. Blomme for helpful clarifications during the study, and M. A. Hahn for many valuable discussions and for his guidance during the writing of this work. The author thanks also an anonymous referee for many useful suggestions.

\section{Preliminaries}\label{sec:preliminaries}

\subsection{Real floor diagrams}\label{sec:real_diagrams}

In this section, we provide the definition of \textit{real floor diagram}. The idea is to adapt the definition given in \cite[Section 4]{arroyo2011recursive} to our problem. We use the same notation of \cite{ardila2013universal,ardila2017double,hahn2024universal}. Let $\mathsf{R}$ and $\mathsf{L}$ be the two multisets that contain the right and left directions:
\begin{equation}
    \mathsf{R}=\{\underbrace{c_1^r,\dots,c_1^r}_{d_1^r-times},\dots,\underbrace{c_n^r,\dots,c_n^r}_{d_n^r-times}\}\qquad \mathsf{L}=\{\underbrace{c_1^l,\dots,c_1^l}_{d_1^l-times},\dots,\underbrace{c_m^l,\dots,c_m^l}_{d_m^l-times}\}
\end{equation}
Note that $|\mathsf{R}|=|\mathsf{L}|=a$. Let $r=(r_1,\dots,r_a)$ and $l=(l_1,\dots,l_a)$ be permutations of the multisets $\mathsf{R}$ and $\mathsf{L}$ respectively.
\begin{definition}[\textbf{Floor diagram}]\label{def-marked_diagram}
    Let $\textbf{c},\textbf{d},a$ as above and $n_1,n_2,g$ nonnegative integers. A \textbf{floor diagram} $\mathcal D$ \textbf{of multidegree d}, type $(n_1,n_2)$ and genus $g$ for $P(\textbf{c},\textbf{d})$ is a connected weighted oriented and colored graph without loops and oriented cycles consisting of:
    \begin{enumerate}
        \item a vertex set $V=L\cup C\cup R$ such that $L\cup R$ contains $n_1$ white vertices and $C$ contains $a$ black vertices, $n_2$ white vertices and it is totally ordered from left to right. We denote the set of black vertices by $BV(\mathcal{D})$;
        \item a set $E$ of edges directed from left to right and such that every white vertex is incident to exactly one edge, which connects it to a black vertex;
        \item a map $w:E\to \mathbb Z_{>0}$ such that if we define the divergence of $v\in V$ to be
        \begin{equation}
            \text{div}(v)=\sum_{e:v\to v'}w(e)-\sum_{e:v'\to v}w(e),
        \end{equation}
        then $\text{div}(b_i)=r_i-l_i$, where $b_i\in C$ is the $i$-th black vertex in $C$ for all $i=1,\dots,a$ and
        \begin{align}
            &\sum_{v\in L\cup (C\setminus BV(\mathcal D))^{-}}\mathrm{div}(v)=d^t,\qquad\sum_{v\in R\cup (C\setminus BV(\mathcal D))^{+}}\mathrm{div}(v)=-\biggl(d^t+\sum_{i=1}^nc_i^rd_i^r-\sum_{j=1}^mc_j^ld_j^l\biggr),
        \end{align}
        where $(C\setminus BV(\mathcal D))^{-}$ (respectively $(C\setminus BV(\mathcal D))^{+}$) denotes the set of white vertices in $C$ with outgoing edge directed to the right (to the left);
        \item the first Betti number of $\mathcal D$ is $g$.
    \end{enumerate}
\end{definition}

Throughout the paper we refer to the vector $r-l$ as \textbf{divergence sequence of black vertices}.\\
We can attach to a floor diagram $\mathcal{D}$ of multidegree \textbf d, type $(n_1,n_2)$ and genus $g$ for $P(\textbf{c},\textbf{d})$ a vector of eight sequences $(\alpha,\beta,\gamma,\delta,\tilde\alpha,\tilde\beta,\tilde\gamma,\tilde\delta)$ with only finitely many non-zero terms that we call \textbf{divergence multiplicity vector}. The sequences $\alpha,\beta,\tilde\alpha,\tilde\beta$ were introduced in \cref{subsec:double_invariants}, while the sequences $\gamma,\delta,\tilde\gamma,\tilde\delta$ are considered here in order to deal with the imaginary part of the floor diagram, which will be defined later. Furthermore, they must satisfy the following equations:
\begin{align}
    \sum_ii[\tilde\alpha_i+\tilde\beta_i+2(\tilde\gamma_i+\tilde\delta_i)]=d^t,\qquad\sum_ii[\alpha_i+\beta_i+2(\gamma_i+\delta_i)]=d^t+\sum_{i=1}^nc_i^rd_i^r-\sum_{j=1}^mc_j^ld_j^l.
\end{align}
In what follows we write $|\alpha|$ for $\sum_{i\geq1}\alpha_i$. We introduce now a marking for a floor diagram $\mathcal D$ of multidegree \textbf d, type $(n_1,n_2)$ and genus $g$ for $P(\textbf{c},\textbf{d})$.

\begin{definition}[\textbf{Marking}]
    Let $n=n_1+n_2+2a+g-1$ be the number of white vertices, black vertices and black-black edges. A map $m:[n]\to\mathcal D$ is called \textbf{marking} for $\mathcal{D}$ if it satisfies the following conditions:
    \begin{itemize}
        \item $m$ is injective and respects the ordering of the vertices in $C$;
        \item if $m(i)>m(j)$ then $i>j$;
        \item if
        \begin{equation}
            \sum_{j=1}^{k-1}\tilde\alpha_j+1\leq i\leq\sum_{j=1}^k\tilde\alpha_j\qquad\text{or}\qquad|\tilde\alpha|+2\sum_{j=1}^{k-1}\tilde\gamma_j+1\leq i\leq|\tilde\alpha|+2\sum_{j=1}^k\tilde\gamma_j
        \end{equation}
        then $m(i)$ belongs to $L$ and has divergence $k$;
        \item for any $k\geq1$, there are exactly $\tilde\beta_k+2\tilde\delta_k$ white-black edges in $C$ that are in the image of $m$;
        \item let $\kappa=n-|\alpha+2\gamma|$, if
        \begin{equation}
            \kappa+\sum_{j=1}^{k-1}\alpha_j+1\leq i\leq \kappa+\sum_{j=1}^k\alpha_j\quad\text{or}\quad \kappa+|\alpha|+2\sum_{j=1}^{k-1}\gamma_j+1\leq i\leq \kappa+|\alpha|+2\sum_{j=1}^k\gamma_j
        \end{equation}
        then $m(i)$ belongs to $R$ and has divergence $-k$;
        \item for any $k\geq1$, there are exactly $\beta_k+2\delta_k$ black-white edges in $C$ that are in the image of $m$.
    \end{itemize}
\end{definition}

\begin{definition}[\textbf{Marked floor diagram}]
    Let $\mathcal D$ be a floor diagram of multidegree \textbf d, type $(n_1,n_2)$ and genus $g$ for $P(\textbf{c},\textbf{d})$ and $m$ be a marking for $\mathcal D$. We say that the floor diagram $\mathcal{D}$ is marked by $m$ and call it \textbf{marked floor diagram}. We denote it as $(\mathcal D,m)$.
\end{definition}

\begin{remark}
    It is important to stress the fact that the definition of marked floor diagram here differs from the one given in \cite{ardila2017double,hahn2024universal}: we do \textbf{not} consider gray vertices. The motivation behind this choice will be clear after reading \cref{def-real_multiplicity}. Strictly speaking, in the definition of \textit{real multiplicity} we do not want the weight of black-black edges to be squared. 
\end{remark}
The goal now is to define the imaginary part of a marked floor diagram. In the same fashion of \cite{arroyo2011recursive}, we introduce first $s$-pairs.
\begin{definition}[\textbf{$s$-pair}]\label{def-r_pair}
    Let $(\mathcal D,m)$ be a marked floor diagram and let us fix $s\geq0$ such that $n_2+2a+g-1-2s\geq0$. Note that $n_2+2a+g-1$ is the number of white vertices, black vertices and black-black edges in the central block. The set $\{i,i+1\}$ is called an $s$-pair if $i$ satisfies one of the following
    \begin{itemize}
        \item $i=|\tilde\alpha|+2k-1$ with $1\leq k\leq|\tilde\gamma|$;
        \item $i=|\tilde\alpha+2\tilde\gamma|+2k-1$ with $1\leq k\leq s$;
        \item $i=\kappa+|\alpha|+2k-1$ with $1\leq k\leq|\gamma|$.
    \end{itemize}
\end{definition}
\begin{example}
    Let us consider $n_1=4$, $n_2=2$, $g=0$, $a=2$ and $\beta=\gamma=\delta=\tilde\beta=0$, $\alpha=\tilde\alpha=1$, $\tilde\gamma=0001$ and $\tilde\delta=01$. Then $n_2+2a+g-1=5$ and $s$ can only be $0,1$ or $2$. If $s=0$ the only $0$-pair is given by $\{2,3\}$ since $|\tilde\gamma|=1$. If $s=1$, then $\{2,3\}$ and $\{4,5\}$ are both $1$-pairs. Finally, if $s=2$, then $\{2,3\}$, $\{4,5\}$ and $\{6,7\}$ are $2$-pairs.
\end{example}
\begin{remark}
    Notice that the number of $s$-pairs is $s+|\gamma+\tilde\gamma|$.
\end{remark}
If $\mathcal D$ is a floor diagram marked by $m$ and $s$ is as in \cref{def-r_pair}, the imaginary part of $\mathcal D$, denoted by $\mathcal I(\mathcal D,m,s)$, is given by
\begin{equation}
    \mathcal I(\mathcal D,m,s)=\{m(i)|\{i,i+1\}\text{ is an $s$-pair and }m(i)\text{ is not adjacent to }m(i+1)\}.
\end{equation}
We define a bijection $\rho_{\mathcal D,m}:\mathcal D\to\mathcal D$ as follows
\begin{itemize}
    \item $\rho_{\mathcal D,m}(m(i))=m(i)$ if $m(i)\in\mathcal D\setminus\mathcal I(\mathcal D,m,s)$;
    \item $\rho_{\mathcal D,m}(m(i))=m(j)$ if $\{i,j\}$ is an $s$-pair and $\{m(i),m(j)\}\subset\mathcal I(\mathcal D,m,s)$.
\end{itemize}
In particular, the function $\rho_{\mathcal D,m}$ is an involution. We associate to a marked floor diagram $\mathcal D$ a vector $(\textbf{x},\textbf{y},\textbf{z},\textbf{w})\in\mathbb{Z}^{m_1}\times\mathbb{Z}^{m_2}\times\mathbb Z^{m_3}\times\mathbb Z^{m_4}$ with $m_1+2m_3=n_1$ and $m_2+2m_4=n_2$, called \textbf{divergence sequence} where
\begin{itemize}
    \item $\textbf{x}=(\text{div}(\tilde q_1),\dots,\text{div}(\tilde q_{e}),\text{div}(q_1),\dots,\text{div}(q_{b}))$ is the sequence of divergences of white vertices in $L\cap(\mathcal D\setminus\mathcal I(\mathcal D,m,s))$ and $R\cap(\mathcal D\setminus\mathcal I(\mathcal D,m,s))$;
    \item $\textbf{y}$ is the sequence of divergences of white vertices in $C\cap(\mathcal D\setminus\mathcal I(\mathcal D,m,s))$;
    \item $\textbf{z}=(\text{div}(\tilde p_1),\dots,\text{div}(\tilde p_{\tilde e}),\text{div}(p_1),\dots,\text{div}(p_{\tilde b}))$ is the sequence of divergences of white vertices in $L\cap\mathcal I(\mathcal D,m,s)$ and $R\cap\mathcal I(\mathcal D,m,s)$;
    \item $\textbf{w}$ is the sequence of divergences of white vertices in $C\cap\mathcal I(\mathcal D,m,s)$.
\end{itemize}
We have a relation between the divergence sequence $(\textbf{x},\textbf{y},\textbf{z},\textbf{w})$ and the divergence multiplicity vector $(\alpha,\beta,\gamma,\delta,\tilde\alpha,\tilde\beta,\tilde\gamma,\tilde\delta)$ given by the following conditions:
\begin{multicols}{2}
    \begin{itemize}
        \item $\alpha_i$ is the number of entries $x_j=-i$.
        \item $\beta_i$ is the number of entries $y_j=-i$.
        \item $\gamma_i$ is the number of entries $z_j=-i$.
        \item $\delta_i$ is the number of entries $w_j=-i$.
        \item $\tilde\alpha_i$ is the number of entries $x_j=i$.
        \item $\tilde\beta_i$ is the number of entries $y_j=i$.
        \item $\tilde\gamma_i$ is the number of entries $z_j=i$.
        \item $\tilde\delta_i$ is the number of entries $w_j=i$.
    \end{itemize}
\end{multicols}
Since the sum of all the divergences in the graph must be $0$, we must have 
\begin{equation}
    \sum_{i=1}^{m_1}x_{i}+\sum_{j=1}^{m_2}y_{j}+\sum_{i=1}^{m_3}2z_{i}+\sum_{j=1}^{m_4}2w_{j}=d^t-d^b=\sum_{j=1}^mc_j^ld_j^l-\sum_{i=1}^nc_i^rd_i^r.
\end{equation}
\begin{remark}\label{rmk-delta_beta_difference}
    Before introducing the notion of $s$-real floor diagrams, we further clarify the distinction between the sequences $\beta,\tilde\beta$ and $\delta,\tilde\delta$. These pairs encode the distribution of positively and negatively directed white-black edges in the central block of the floor diagram, but with an important difference:
    \begin{itemize}
        \item The $i$-th entry of the sequences $\beta,\tilde\beta$ count real white-black edges of weight $i$ not involved in imaginary pairs.
        \item In contrast, the sequences $\delta,\tilde\delta$ track white-black edges that are identified through the involution $\rho_{\mathcal D,m}$ and thus contribute to the imaginary part of the diagram.
    \end{itemize}
\end{remark}
\begin{definition}[\textbf{$s$-real floor diagram}]\label{def-real_diagram}
    Let $(\mathcal D,m)$ be a marked floor diagram having divergence sequence $(\alpha,\beta,\gamma,\delta,\tilde\alpha,\tilde\beta,\tilde\gamma,\tilde\delta)$ and $s\geq0$. We say that $(\mathcal D,m)$ is a \textbf{marked} $s$-\textbf{real floor diagram} if $(\mathcal D,m)$ and $(\mathcal D,\rho_{\mathcal D,m}\circ m)$ are equivalent, namely there is a isomorphism of weighted oriented graphs between $(\mathcal D,m)$ and $(\mathcal D,\rho_{\mathcal D,m}\circ m)$, and there are exactly $2\delta_k$ black-white internal edges and $2\tilde\delta_k$ white-black internal edges of weight $k$ belonging to $\mathcal I(\mathcal D,m,s)$. 
\end{definition}
Strictly speaking, a marked floor diagram $(\mathcal D,m)$ is a marked $s$-real floor diagram if the bijection $\rho_{\mathcal D,m}$ exchanges pairs of consecutive edges having the same weight or pairs of consecutive black vertices having same divergence.
\begin{definition}[\textbf{$s$-real multiplicity}]\label{def-real_multiplicity}  
    Let $(\mathcal D,m)$ be a marked floor diagram having divergence sequence $(\alpha,\beta,\gamma,\delta,\tilde\alpha,\tilde\beta,\tilde\gamma,\tilde\delta)$. If $(\mathcal D,m)$ is an $s$-real floor diagram with all edges of even weight in $\mathcal I(\mathcal D,m,s)$, we define the $s$-\textbf{real multiplicity} of $(\mathcal D,m)$, denoted by $\mu_s(\mathcal D,m)$, as
    \begin{equation}
        \mu_s(\mathcal D,m)=(-1)^{\frac{|BV(\mathcal D)\cap\mathcal I(\mathcal D,m,s)|}{2}}\prod w(e)
    \end{equation}
     where the product runs over all the internal edges $e\in E$ such that $e\notin m(\{|\tilde\alpha+2\tilde\gamma|+2s+1,\dots,n\})$, and $\mu_s(\mathcal D,m)=0$ otherwise.
\end{definition}
\begin{remark}
    The definition of $s$-real floor diagrams given in this section is an adaptation of the one given in \cite[Section 4]{arroyo2011recursive}. Note that, if $s=0$, the function $\rho_{\mathcal D,m}$ exchanges the order of pairs of elements in $L$ and $R$, therefore any floor diagram is also a $0$-real floor diagram. In particular, the $0$-real multiplicity takes only two possible values: $0$ or $1$. To prove \cref{thm-quasipolynomiality}, we are interested in the \textit{totally real case}, i.e. the case in which the imaginary part of floor diagrams is empty. In terms of divergence multiplicity vector, this condition translates in asking for the sequences $\gamma,\tilde\gamma,\delta,\tilde\delta$ to be $0$.\\
    More precisely, the central block of a $0$-real floor diagram has empty intersection with the imaginary part, which forces the sequences $\delta,\tilde\delta$ to be $0$. Furthermore, since we are interested in the case in which the imaginary part is empty, also the sequences $\gamma,\tilde\gamma$ must be $0$.
\end{remark}
\begin{remark}
    From now on we refer to \textit{marked real floor diagrams} simply as \textit{floor diagrams}.
\end{remark}
\begin{example}\label{example-s-real-multiplicity}
    Let us consider the following data: $n_1=4$, $n_2=2$, $g=0$, $d_1^r=d_2^r=1$, $d^l=a=2$ and 
    \begin{equation}
        \mathsf{R}=\{c_1,c_2\}\qquad \mathsf{L}=\{0,0\}
    \end{equation}
    where $c_1,c_2\in\mathbb Z$ such that $c_1> c_2$. Consider the permutations $r=(c_1,c_2)$ and $l=(0,0)$, so we have $r-l=(c_1,c_2)$. The floor diagram $\mathcal D$ in \cref{fig:example_floor_diagram} has multidegree $\textbf{d}=(13;1,1;2)$, genus $0$ and it is of type $(4,2)$. The marking $m:[9]\to\mathcal D$ is given by red numbers in \cref{fig:example_floor_diagram}.
    \begin{figure}
        \centering

        \tikzset{every picture/.style={line width=0.75pt}} %set default line width to 0.75pt        
    
        \tikzset{every picture/.style={line width=0.75pt}} %set default line width to 0.75pt        
    
        \begin{tikzpicture}[x=0.75pt,y=0.75pt,yscale=-1,xscale=1]
            %uncomment if require: \path (0,300); %set diagram left start at 0, and has height of 300
    
            %Straight Lines [id:da45573938188752083] 
            \draw  [dash pattern={on 4.5pt off 4.5pt}]  (104,61) -- (105,197) ;
            %Straight Lines [id:da5615639411265632] 
            \draw  [dash pattern={on 4.5pt off 4.5pt}]  (422,62) -- (424,181) ;
            %Shape: Circle [id:dp5572904475436538] 
            \draw   (60,83.5) .. controls (60,79.36) and (63.36,76) .. (67.5,76) .. controls (71.64,76) and (75,79.36) .. (75,83.5) .. controls (75,87.64) and (71.64,91) .. (67.5,91) .. controls (63.36,91) and (60,87.64) .. (60,83.5) -- cycle ;
            %Shape: Circle [id:dp12649423975155338] 
            \draw   (60,187.5) .. controls (60,183.36) and (63.36,180) .. (67.5,180) .. controls (71.64,180) and (75,183.36) .. (75,187.5) .. controls (75,191.64) and (71.64,195) .. (67.5,195) .. controls (63.36,195) and (60,191.64) .. (60,187.5) -- cycle ;
            %Shape: Circle [id:dp6588283066289355] 
            \draw   (61,143.5) .. controls (61,139.36) and (64.36,136) .. (68.5,136) .. controls (72.64,136) and (76,139.36) .. (76,143.5) .. controls (76,147.64) and (72.64,151) .. (68.5,151) .. controls (64.36,151) and (61,147.64) .. (61,143.5) -- cycle ;
            %Shape: Circle [id:dp3505681746999192] 
            \draw   (187,121.5) .. controls (187,117.36) and (190.36,114) .. (194.5,114) .. controls (198.64,114) and (202,117.36) .. (202,121.5) .. controls (202,125.64) and (198.64,129) .. (194.5,129) .. controls (190.36,129) and (187,125.64) .. (187,121.5) -- cycle ;
            %Shape: Circle [id:dp5318254524159085] 
            \draw   (457,120.5) .. controls (457,116.36) and (460.36,113) .. (464.5,113) .. controls (468.64,113) and (472,116.36) .. (472,120.5) .. controls (472,124.64) and (468.64,128) .. (464.5,128) .. controls (460.36,128) and (457,124.64) .. (457,120.5) -- cycle ;
            %Shape: Circle [id:dp5391550380646064] 
            \draw   (128,121.5) .. controls (128,117.36) and (131.36,114) .. (135.5,114) .. controls (139.64,114) and (143,117.36) .. (143,121.5) .. controls (143,125.64) and (139.64,129) .. (135.5,129) .. controls (131.36,129) and (128,125.64) .. (128,121.5) -- cycle ;
            %Shape: Circle [id:dp6517832362972478] 
            \draw  [fill={rgb, 255:red, 0; green, 0; blue, 0 }  ,fill opacity=1 ] (261,122.5) .. controls (261,118.36) and (264.36,115) .. (268.5,115) .. controls (272.64,115) and (276,118.36) .. (276,122.5) .. controls (276,126.64) and (272.64,130) .. (268.5,130) .. controls (264.36,130) and (261,126.64) .. (261,122.5) -- cycle ;
            %Shape: Circle [id:dp7262353510252366] 
            \draw  [fill={rgb, 255:red, 0; green, 0; blue, 0 }  ,fill opacity=1 ] (360,122) .. controls (360,117.86) and (363.36,114.5) .. (367.5,114.5) .. controls (371.64,114.5) and (375,117.86) .. (375,122) .. controls (375,126.14) and (371.64,129.5) .. (367.5,129.5) .. controls (363.36,129.5) and (360,126.14) .. (360,122) -- cycle ;
            %Curve Lines [id:da43209117867087743] 
            \draw    (75,83.5) .. controls (200,57) and (366,69) .. (360,122) ;
            %Curve Lines [id:da372250565706405] 
            \draw    (76,143.5) .. controls (134,164) and (234,168) .. (261,122.5) ;
            %Curve Lines [id:da7420387181444426] 
            \draw    (75,187.5) .. controls (135,202) and (370,206) .. (360,122) ;
            %Straight Lines [id:da3798951872786487] 
            \draw    (276,122.5) -- (360,122) ;
            %Straight Lines [id:da05599550117650676] 
            \draw    (202,121.5) -- (261,122.5) ;
            %Curve Lines [id:da4332641171093461] 
            \draw    (143,121.5) .. controls (156,98) and (228,82) .. (261,122.5) ;
            %Straight Lines [id:da9469444723082019] 
            \draw    (375,122) -- (457,120.5) ;

            % Text Node
            \draw (192,80.4) node [anchor=north west][inner sep=0.75pt]    {$2$};
            % Text Node
            \draw (218,121.4) node [anchor=north west][inner sep=0.75pt]    {$2$};
            % Text Node
            \draw (292,99.4) node [anchor=north west][inner sep=0.75pt]    {$-4-c_{2}$};
            % Text Node
            \draw (160,161.4) node [anchor=north west][inner sep=0.75pt]    {$4$};
            % Text Node
            \draw (210,201.4) node [anchor=north west][inner sep=0.75pt]    {$4$};
            % Text Node
            \draw (64,79) node [anchor=north west][inner sep=0.75pt]    {$\textcolor[rgb]{0.82,0.01,0.01}{\text{x}}$};
            % Text Node
            \draw (64,139) node [anchor=north west][inner sep=0.75pt]    {$\textcolor[rgb]{0.82,0.01,0.01}{\text{x}}$};
            % Text Node
            \draw (63,183) node [anchor=north west][inner sep=0.75pt]    {$\textcolor[rgb]{0.82,0.01,0.01}{\text{x}}$};
            % Text Node
            \draw (163,99) node [anchor=north west][inner sep=0.75pt]    {$\textcolor[rgb]{0.82,0.01,0.01}{\text{x}}$};
            % Text Node
            \draw (228,117) node [anchor=north west][inner sep=0.75pt]    {$\textcolor[rgb]{0.82,0.01,0.01}{\text{x}}$};
            % Text Node
            \draw (264,117) node [anchor=north west][inner sep=0.75pt]    {$\textcolor[rgb]{0.82,0.01,0.01}{\text{x}}$};
            % Text Node
            \draw (313,117) node [anchor=north west][inner sep=0.75pt]    {$\textcolor[rgb]{0.82,0.01,0.01}{\text{x}}$};
            % Text Node
            \draw (363,117) node [anchor=north west][inner sep=0.75pt]    {$\textcolor[rgb]{0.82,0.01,0.01}{\text{x}}$};
            % Text Node
            \draw (460,117) node [anchor=north west][inner sep=0.75pt]    {$\textcolor[rgb]{0.82,0.01,0.01}{\text{x}}$};
            % Text Node
            \draw (62,53.4) node [anchor=north west][inner sep=0.75pt]    {$\textcolor[rgb]{0.82,0.01,0.01}{1}$};
            % Text Node
            \draw (64,118.4) node [anchor=north west][inner sep=0.75pt]    {$\textcolor[rgb]{0.82,0.01,0.01}{2}$};
            % Text Node
            \draw (60,201.4) node [anchor=north west][inner sep=0.75pt]    {$\textcolor[rgb]{0.82,0.01,0.01}{3}$};
            % Text Node
            \draw (154,84.4) node [anchor=north west][inner sep=0.75pt]    {$\textcolor[rgb]{0.82,0.01,0.01}{4}$};
            % Text Node
            \draw (218,104.4) node [anchor=north west][inner sep=0.75pt][font=\footnotesize]{$\textcolor[rgb]{0.82,0.01,0.01}{5}$};
            % Text Node
            \draw (263,133.4) node [anchor=north west][inner sep=0.75pt]    {$\textcolor[rgb]{0.82,0.01,0.01}{6}$};
            % Text Node
            \draw (314,129.4) node [anchor=north west][inner sep=0.75pt]    {$\textcolor[rgb]{0.82,0.01,0.01}{7}$};
            % Text Node
            \draw (369.5,132.9) node [anchor=north west][inner sep=0.75pt]    {$\textcolor[rgb]{0.82,0.01,0.01}{8}$};
            % Text Node
            \draw (466.5,131.4) node [anchor=north west][inner sep=0.75pt]    {$\textcolor[rgb]{0.82,0.01,0.01}{9}$};

        \end{tikzpicture}
        
        \caption{}
        \label{fig:example_floor_diagram}
    \end{figure}

    We can attach two divergence multiplicity vectors to $\mathcal D$:
    \begin{align}
        &\xi=(\alpha,\beta,\gamma,\delta,\tilde\alpha,\tilde\beta,\tilde\gamma,\tilde\delta)=(1,0,0,0,1,0,0001,01)\\
        &\xi'=(\alpha',\beta',\gamma',\delta',\tilde\alpha',\tilde\beta',\tilde\gamma',\tilde\delta')=(1,0,0,0,1,02,0001,0).
    \end{align}
    The divergence sequence associated to $\xi$ is $(\textbf{x}_\xi;\textbf{y}_\xi;\textbf{z}_\xi;\textbf{w}_\xi)=(1,-1;0;4;2)$, while the divergence sequence associated to $\xi'$ is $(\textbf{x}_{\xi'};\textbf{y}_{\xi'};\textbf{z}_{\xi'};\textbf{w}_{\xi'})=(1,-1;2,2;4;0)$.\\
    Let us compute the multiplicity of $\mathcal D$ with divergence multiplicity vector $\xi$. Note that, since there are some edges in $C$ having even weight we have that $\mu_0(\mathcal D)=0$. Let us consider the cases $s=1,2$:
    \begin{itemize}
        \item $s=1$: we have that $\{2,3\}$ and $\{4,5\}$ are $s$-pairs and since $4$ and $5$ are not adjacent, we have that
        \begin{equation}
            \mathcal I(\mathcal D,m,1)=\{2,3,4,5\}.
        \end{equation}
        Consider the bijection $\rho_{\mathcal D,m}:\mathcal D\to\mathcal D$ such that $\rho_{\mathcal D,m}(2)=3$ and $\rho_{\mathcal D,m}(4)=5$. Then $(\mathcal D,\rho_{\mathcal D,m}\circ m)$ and $(\mathcal D,m)$ are equivalent and we have $\mu_1(\mathcal D,m)=4$ as long as $c_2$ is odd and $-4-c_2>0$.
        \item $s=2$: we have that $\{2,3\}$, $\{4,5\}$ and $\{6,7\}$ are $s$-pairs, but $6$ is adjacent to $7$ and therefore $\mathcal I(\mathcal D,m,2)$ does not change from the previous case as well as $\rho_{\mathcal D,m}$, hence we get $\mu_2(\mathcal D,m)=4(-4-c_2)$ as long as $c_2$ is odd and $-4-c_2>0$.
    \end{itemize}
    It is easy to see that the $s$-real multiplicity of $\mathcal D$ with divergence multiplicity vector $\xi'$ is zero for all $s=0,1,2$.
\end{example}

The following theorem proves that we can enumerate floor diagrams instead of tropical curves.
\begin{theorem}\label{thm-correspondence}
    Let $\textbf d=(d^t;\textbf d^r;\textbf d^l)$ be a vector of positive integer numbers, $g\geq0$ an integer and \textbf{x} a vector with coordinates in $\mathbb{Z}\setminus\{0\}$. We write $\alpha(\textbf x)=\alpha$ and $\tilde\alpha(\textbf x)=\tilde\alpha$. Then, for any two sequences of non-negative integer numbers $\beta=(\beta_i)_{i\geq1}$ and $\tilde\beta=(\tilde\beta_i)_{i\geq1}$ such that
    \begin{equation}
        \sum_ii(\alpha_i+\beta_i)=d^t+\sum_{i=1}^nc_i^rd_i^r-\sum_{j=1}^mc_j^ld_j^l\quad\text{and}\quad\sum_ii(\tilde\alpha_i+\tilde\beta_i)=d^t,
    \end{equation}
    one has
    \begin{equation}
        W_{\textbf c,g}^{\alpha,\beta,\tilde\alpha,\tilde\beta}(\textbf d)=\sum_{\mathcal D}\mu(\mathcal D)
    \end{equation}
    where the sum runs over all floor diagrams $\mathcal D$ of multidegree \textbf d, genus $g$, left-right sequence \textbf x, and divergence multiplicity vector $(\alpha,\beta,\tilde\alpha,\tilde\beta)$ for $P=P(\textbf{c},\textbf{d})$.
\end{theorem}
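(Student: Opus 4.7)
The plan is to follow the standard strategy for correspondence theorems in floor diagram theory, adapting \cite{brugalle2008floor, arroyo2011recursive, ardila2017double, hahn2024universal} to our setting of $h$-transverse polygons with both top and bottom constraints and with the real multiplicity of \cite{itenberg2009caporaso}. Since $W_{\textbf c,g}^{\alpha,\beta,\tilde\alpha,\tilde\beta}(\textbf d)$ does not depend on the chosen tropically generic configuration, I would first replace $\omega$ by a \emph{vertically stretched} one: the $y$-coordinates of the interior points $p_1,\dots,p_l$ are placed at widely separated scales dominating all horizontal differences, and the $q_j^i,\tilde q_j^i$ on $B$ and $T$ are placed in generic position. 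A standard argument from \cite{brugalle2008floor} then forces every tropical curve through $\omega$ to be \emph{floor decomposed}: it splits into horizontal floors joined by vertical elevators, each floor and each elevator carrying a controlled number of marked points.

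Next I would construct the extraction map from floor decomposed tropical curves to marked floor diagrams. Each floor contracts to a black vertex, ordered by the $y$-coordinate of its floor, with divergence equal to $r_i-l_i$ read off from the slopes of its non-vertical ends; this explains the appearance of permutations of the multisets $D_r$ and $D_l$ in \cref{def-marked_diagram}. Each elevator becomes an internal edge of weight equal to the elevator weight: if it meets an interior point $p_j$, a white vertex in $C$ is inserted; if it terminates at $\tilde q_j^i \in T$ or $q_j^i \in B$, it ends at a white vertex in $L$ or $R$ of divergence $\pm i$. The marking $m$ records the correspondence between configuration points and the pieces of the diagram they lie in, in the order induced by the vertical stretching. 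The conditions (1)--(6) of \cref{def-marked_diagram} then follow essentially tautologically: connectedness and the genus come from the corresponding properties of the curve, the divergence identity at black vertices from the balancing of each floor, and the monotonicity of $m$ from the vertical stretching. The sequences $\alpha,\tilde\alpha$ encode ends hitting fixed points on $B,T$, while $\beta,\tilde\beta$ encode moving ends of fixed weight, matching the definition of $\omega$.

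The extraction map is then shown to be a bijection by reconstructing the curve from the diagram: floors are placed one by one from bottom to top, the $x$-coordinate of each being rigidly determined by its marked point and by the floors below, as in \cite{brugalle2008floor}. The final and most delicate step is the multiplicity match. The real tropical multiplicity of \cite[Section 3]{itenberg2009caporaso} is a product of local vertex contributions; under the floor decomposition these localize so that contributions along each floor are trivial (the non-vertical ends of a floor in an $h$-transverse polygon have weight one), while the contributions at the endpoints of elevators yield $\pm w(e)$ or vanish according to the parity of $w(e)$. Since we are in the totally real regime, the sequences $\gamma,\tilde\gamma,\delta,\tilde\delta$ vanish, all $s$-pairs are empty, $\rho_{\mathcal D,m}=\mathrm{id}$, and the formula in \cref{def-real_multiplicity} reduces to $\mu(\mathcal D)=\prod w(e)$ over the appropriate subset of internal edges, matching the tropical computation.

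The main obstacle I expect is precisely this multiplicity match: while the combinatorial bijection between stretched tropical curves and marked floor diagrams is essentially bookkeeping once the geometry is set up, reconciling the real vertex multiplicities of \cite{itenberg2009caporaso} with the combinatorial formula of \cref{def-real_multiplicity} requires a careful local analysis at each vertex, tracking how signs and vanishing conditions induced by even-weight elevators propagate, and how the dichotomy between edges hit by $m$ and those not hit is produced by the vertical stretching. The computation is local, but the bookkeeping, compounded by the presence of arbitrary $h$-transverse direction vectors $\textbf c$ rather than the Hirzebruch case of \cite{ardila2017double}, is the technical heart of the argument.
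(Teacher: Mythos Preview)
Your outline is correct and would lead to a valid proof, but the paper takes a much shorter route. Rather than reconstructing the bijection between vertically stretched tropical curves and marked floor diagrams from scratch, the paper simply cites \cite{hahn2024universal}, where this correspondence is already established for $h$-transverse polygons in the same generality. The multiplicity match, which you flag as ``the technical heart of the argument,'' is then dispatched in one line: by \cite[Remark~6]{shustin2012tropical}, the Welschinger vertex multiplicity is governed by $|\text{int}(\Delta_v)\cap\mathbb Z^2|$, and for a floor-decomposed curve with $h$-transverse Newton polygon every dual triangle $\Delta_v$ has no interior lattice points. Hence the tropical multiplicity of the whole curve is either $0$ (if some edge has even weight) or $1$, which immediately matches the $0$-real multiplicity of the corresponding floor diagram in the totally real case (cf.\ the remark following \cref{def-real_multiplicity}).

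One correction to your local analysis: your description of the elevator-endpoint contributions as ``$\pm w(e)$ or vanish according to parity'' conflates the real and complex cases. The Welschinger vertex multiplicity is $(-1)^{|\text{int}(\Delta_v)\cap\mathbb Z^2|}$ when $\Delta_v$ has no even side, and $0$ otherwise; there is no edge-weight factor. It is precisely the observation $|\text{int}(\Delta_v)\cap\mathbb Z^2|=0$ that forces all nonzero contributions to be $+1$, so the sign bookkeeping you anticipate never arises and your expected main obstacle dissolves.
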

\begin{proof}
    In \cite{hahn2024universal}, we illustrate the correspondence between tropical curves and floor diagrams. Therefore, what remains to prove here is just that the tropical curve $T$ and the corresponding floor diagram have the same Welschinger multiplicity: this follows from \cite[Remark 6]{shustin2012tropical} since in our situation the floor decomposition tells us that $|\text{int}(\Delta_v)\cap\mathbb Z^2|=0$ for each vertex $v$ of $T$, where $\Delta_v$ represents the triangle associated to $v$ in the subdivision of $P$ given by $T$. Hence, the multiplicity of $T$ can assume only two values, i.e. $0$ or $1$, and the claim follows from the correspondence between tropical curves and floor diagrams. In particular, when $\tilde\alpha,\tilde\beta=0$ we get the relative tropical Welschinger invariants defined in \cite{itenberg2009caporaso}. 
\end{proof}
\begin{remark}
    In the case of the projective plane and $\alpha=\tilde\alpha=\beta=\tilde\beta=0$ \cref{thm-correspondence} is the correspondence theorem stated and proved in \cite[Theorem 3]{mikhalkin2005enumerative}. Correspondence theorems are fundamental in the application of tropical geometry to enumerative problems, see \cite{mikhalkin2005enumerative} and \cite{shustin2004tropical,shustin2012tropical,ardila2017double,hahn2024universal,mandel2020descendant} for modification.
\end{remark}

\subsection{Weighted Ehrhart theory}\label{subsec:Ehrhart}

The focus of this section is the study of a particular weighted partition function. Specifically, we study the properties of a weighted partition function defined by means of a quasipolynomial. To this aim, we refer to results proved in \cite{de2024sums,sturmfels1995vector}, which are stated herein for completeness.
\begin{definition}\label{def-lattice_vectors}
    Let $X=\{a_1,\dots,a_m\}\subset\mathbb Z^d$ be a multiset of lattice vectors in $\mathbb R^d$. Sometimes we intend $X$ as a $d\times m$ matrix with coefficients in $\mathbb Z$.
    \begin{itemize}
        \item The \textbf{rank} of $X$, denoted by $\text{rank}(X)$, is defined as
        \begin{equation}
            \text{rank}(X)=\text{dim}(\text{Span}_\mathbb R(X)).
        \end{equation}
        \item $X$ is \textbf{unimodular} if all the maximal minors of $X$ are $-1,0$ or $1$.
        \item $X$ is \textbf{pointed} if $\text{cone}(X)$ does not contain a nontrivial linear subspace of $\mathbb R^d$.
        \item The \textbf{chamber complex} of $X$, denoted by $\text{Ch}(X)$, is the following set
        \begin{equation}
            \text{Ch}(X)=\{\sigma_Y=\text{cone}(Y)\subseteq\mathbb R^d|Y\subseteq X\}.
        \end{equation}
    \end{itemize}
\end{definition}
\begin{definition}\label{def-quasipolynomial}
    The function $f\colon\mathbb Z^d\to\mathbb R$ is \textbf{quasipolynomial} if there exists a full rank sublattice $\Lambda\subseteq\mathbb Z^d$ and $N$ different cosets $\Lambda_1,\dots,\Lambda_N$ of $\Lambda$ in $\mathbb Z^d$, where $N$ is the index of $\Lambda$ in $\mathbb Z^d$, such that $f(v)=f_i(v)$ for $v\in\Lambda_i$, where the $f_i$'s are polynomials. Furthermore, if $X\subset\mathbb Z^d$ is a pointed multiset of lattice vectors, $f$ is \textbf{piecewise quasipolynomial relative to} $\text{Ch}(X)$ if the restriction of $f$ to any $\sigma\subset\text{Ch}(X)$ is quasipolynomial.
\end{definition}
Let $c\in\mathbb Z^d$ and $X=\{a_1,\dots,a_m\}\subset\mathbb Z^d$ be a pointed vector configuration. Define the polytope
\begin{equation}\label{eq-polytope_PX(c)}
    P_X(c)\colon=\{z=(z_1,\dots,z_m)\in\mathbb R^m|Xz=c,\>z_i\geq0\text{ for all }i\in[m]\}
\end{equation}
and consider the function
\begin{equation}
    \mathcal P_X(c)=|P_X(c)\cap\mathbb Z^m|.
\end{equation}
This function is called \textbf{vector partition function}.
\begin{theorem}\label{thm-pqp_vector_partition_map}
    Let $X\subset\mathbb Z^d$ be a pointed vector configuration. The function $\mathcal P_X(c)$ is piecewise quasipolynomial relative to the chambers of $\text{Ch}(X)$. Furthermore, if $X$ is unimodular, then $\mathcal P_X(c)$ is piecewise polynomial relative to the chambers of $\text{Ch}(X)$ and each polynomial piece has degree $|X|-\text{rank}(X)$.
\end{theorem}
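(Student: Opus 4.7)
The plan is to view $\mathcal P_X(c)$ as a parametric lattice-point count of the polytope family $P_X(c)$ defined in \eqref{eq-polytope_PX(c)} and combine the chamber structure of $X$ with parametric Ehrhart theory. First I would verify that the chamber complex $\mathrm{Ch}(X)$ controls the combinatorial type of $P_X(c)$: for $c$ lying in the interior of a single chamber $\sigma$, the collection of bases $B\subseteq X$ for which $X_B^{-1}c\geq 0$ is constant, so the vertices of $P_X(c)$, given by $X_B^{-1}c$ supported on $B$ and zero elsewhere, depend linearly on $c$ throughout $\sigma$. Pointedness of $X$ guarantees that $P_X(c)$ is bounded, so we are dealing with a genuine family of polytopes of common combinatorial type.

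Next, I would fix a chamber $\sigma$ and count lattice points parametrically. One natural route is to use the generating-function identity
\begin{equation}
    \sum_{c\in\mathbb Z^d}\mathcal P_X(c)\,z^{c}=\prod_{i=1}^m\frac{1}{1-z^{a_i}},
\end{equation}
and extract the part of the Laurent expansion relevant to $c\in\sigma$ via partial-fraction decomposition in the spirit of \cite{sturmfels1995vector}; the denominators appearing produce periods dividing the maximal minors of $X$, so the extracted coefficient is a quasipolynomial in $c$ on $\sigma$. A more geometric alternative would be to apply a Brion--Vergne style local formula to $P_X(c)$, expressing the lattice-point count as an alternating sum of tangent-cone contributions at the vertices $X_B^{-1}c$, each of which is quasipolynomial in $c$ on $\sigma$.

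For the unimodular refinement, every maximal minor of $X$ is $0$ or $\pm1$, so all candidate vertices $X_B^{-1}c$ are integer for $c\in\mathbb Z^d$ and $P_X(c)$ is a lattice polytope parametrized linearly by $c$. In this setting the periods in the partial-fraction (or Brion--Vergne) expansion all collapse to $1$, and the quasipolynomial becomes an honest polynomial on each chamber. For the degree, note that on a top-dimensional chamber $P_X(c)$ has dimension $\dim\ker(X)=|X|-\mathrm{rank}(X)$, and the leading coefficient of the Ehrhart polynomial equals the relative volume of $P_X(c)$, which is strictly positive by pointedness; hence the degree is exactly $|X|-\mathrm{rank}(X)$.

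The main technical obstacle is handling the transition between chambers and ensuring a uniform quasipolynomial formula valid on the closure of each $\sigma$: within the interior the linear dependence of vertices on $c$ is clean, but on the walls several bases can collapse to the same vertex, and one must check that the residue extraction (or tangent-cone sum) remains well defined. I would navigate this exactly as in \cite{sturmfels1995vector,de2024sums}, where the appropriate framework of parametric Ehrhart quasipolynomials over the chamber complex is already developed, and apply their theorems directly rather than reconstructing the analysis from scratch.
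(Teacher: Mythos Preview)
Your proposal is correct, and in fact you do considerably more than the paper does: the paper provides no proof of this theorem at all, merely stating ``A proof of \cref{thm-pqp_vector_partition_map} can be found in \cite{sturmfels1995vector}.'' So there is nothing to compare your argument to within the paper itself.

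Your sketch is a reasonable reconstruction of the approach in the cited reference. The generating-function identity and partial-fraction route is precisely the mechanism Sturmfels uses, and your observation that the periods come from the maximal minors (hence collapse to $1$ in the unimodular case) is the correct explanation for the polynomial refinement. Your final paragraph, where you acknowledge that the wall-crossing analysis is the delicate point and that you would ultimately defer to the established framework rather than redo it, is an honest assessment and aligns with how the paper treats the result: as a black box to be invoked, not reproved.
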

A proof of \cref{thm-pqp_vector_partition_map} can be found in \cite{sturmfels1995vector}.
\begin{definition}\label{def-parametric}
    Let $z=(z_1,\dots,z_m)\in\mathbb Z^m$. A polytope $Q$ is called \textbf{parametric polytope} if
    \begin{equation}
        Q=Q(z_1,\dots,z_m)=\biggl\{y\in\mathbb R^k\bigg|Cy=\sum_{i=1}^mz_id_i+e,\>y_j\geq0\text{ for all }j\in[k]\biggr\}
    \end{equation}
    with $C\in\mathbb Z^{r\times k}$, $d_i,e\in\mathbb Z^r$.
\end{definition}
\begin{definition}\label{def-Ehrhartqp}
    A function $f:\mathbb Z^m\to\mathbb Z$ is called \textbf{Ehrhart quasipolynomial} if $f$ is quasipolynomial and there exists a parametric polytope $Q(z_1,\dots,z_m)\subset\mathbb R^k$ such that
    \begin{equation}
        f(z_1,\dots,z_m)=|Q(z_1,\dots,z_m)\cap\mathbb Z^k|.
    \end{equation}
    Let $f:\mathbb Z^m\to\mathbb Z$ be an Ehrhart quasipolynomial and $X=\{a_1,\dots,a_m\}\subset\mathbb Z^d$ a pointed vector configuration. Define the \textbf{weighted partition function} as
    \begin{equation}
        \mathcal P_{X,f}(c)=\sum_{z\in P_X(c)\cap\mathbb Z^m}f(z).
    \end{equation}
\end{definition}
The following theorem provides a method for converting a weighted sum of lattice points of a polytope into the enumeration of lattice points of a higher-dimensional polytope. \cref{thm-weight_lifting_polytope} is proved in full generality in \cite{de2024sums}.
\begin{theorem}\label{thm-weight_lifting_polytope}
    Let $X=\{a_1,\dots,a_m\}\subset\mathbb Z^d$ be a pointed vector configuration, $c\in\mathbb Z^d$ and consider the polytope $P_X(c)$. Let $w(z_1,\dots,z_m)$ be any function such that
    \begin{equation}
        w(z_1,\dots,z_m)=|Q(z_1,\dots,z_m)\cap\mathbb Z^k|.
    \end{equation} 
    Then there exists a \textbf{weight lifting polytope}
    \begin{equation}
        P_X^\ast(c)=\biggl\{(z,y)\in\mathbb R^{m+k}\bigg|
        \begin{pmatrix}
            & &X & &0\\
            &d_1 &\dots &d_m &-C
        \end{pmatrix}
        \begin{pmatrix}
            z\\
            y
        \end{pmatrix}
        =
        \begin{pmatrix}
            c\\
            -e
        \end{pmatrix},\>
        z_i,y_j\geq0 \text{ for all }i\in[m],j\in[k]\biggr\}
    \end{equation}
    such that
    \begin{equation}
        \mathcal P_{X,w}(c)=\sum_{z\in P_X(c)\cap\mathbb Z^m}w(z)=|P_X^*(c)\cap\mathbb Z^{m+k}|,
    \end{equation}
    where $C,d_1,\dots,d_m,e$ are as in \cref{def-parametric}.
\end{theorem}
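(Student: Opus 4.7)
The plan is to recast the invariant as a weighted lattice-point count via \cref{thm-correspondence} and then extract piecewise quasipolynomiality from the Ehrhart machinery of Section~2.2. First, \cref{thm-correspondence} rewrites
\begin{equation*}
G^{n_1,n_2}_{(\textbf{d}^r;\textbf{d}^l),\textbf{c},g}(\textbf{x},\textbf{y})=\sum_\mathcal{D}\mu(\mathcal{D}),
\end{equation*}
summed over marked floor diagrams of the prescribed multidegree, genus, and divergence sequence coming from $(\textbf{x},\textbf{y})$. The next step is to stratify this sum by the combinatorial type $\tau$ of the underlying directed graph (including the decomposition $V=L\cup C\cup R$ and the coloring of its vertices); since only finitely many types are compatible with the fixed data $(\textbf{d}^r,\textbf{d}^l,g,n_1,n_2)$, it suffices to analyse each type.

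For a fixed type $\tau$, the divergence conditions at every vertex form a linear system in the positive integer edge weights $w\colon E\to\mathbb{Z}_{>0}$, whose right-hand side depends affinely on $(\textbf{x},\textbf{y},\textbf{c})$. Letting $X_\tau$ be the (reduced) signed vertex-edge incidence matrix, the valid weightings are exactly the lattice points of the flow polytope $P_{X_\tau}(b_\tau(\textbf{x},\textbf{y},\textbf{c}))$ of \eqref{eq-polytope_PX(c)}. For each such weighting, the number of compatible markings, counted with their multiplicity $\mu(\mathcal{D})$, is a product of multinomial-type factors recording how the marking indices in $[n]$ are distributed among the vertices and edges of prescribed weight; this count is itself an Ehrhart quasipolynomial in $w$ in the sense of \cref{def-Ehrhartqp}. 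Applying \cref{thm-weight_lifting_polytope} then yields a weight-lifting polytope $P^*_{X_\tau}$ whose unweighted lattice-point count realises the whole $\tau$-contribution.

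By \cref{thm-pqp_vector_partition_map}, each such count is piecewise quasipolynomial in the parameters relative to the chamber complex of the column configuration of the lifted matrix, and summing the finitely many $\tau$-contributions preserves piecewise quasipolynomiality on the common refinement of these chamber complexes. The main obstacle will be to match this refinement with the declared arrangement $\mathcal{H}^{n_1,n_2}(\textbf{c})$ (respectively $\tilde{\mathcal{H}}^{n_1,n_2}$): the walls \eqref{eq-hyperplanes1} should correspond precisely to degenerations where a subset $S\cup T$ of white vertices together with a sub-collection $(k_i),(t_j)$ of black vertices grouped by right/left slopes assembles into a connected zero-divergence subconfiguration (the failure locus of a combinatorial type), while \eqref{eq-hyperplanes2} records the marking-order constraint on the central white vertices of $C$.

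Finally, the degree-$g$ claim for the $\textbf{c}$-varying map will follow from a dimension count: the flow polytope of a connected graph with prescribed divergences and first Betti number $g$ is $g$-dimensional, so by the unimodular case of \cref{thm-pqp_vector_partition_map} its lattice-point count contributes polynomial pieces of degree $g$; the extra coordinates introduced by the weight lifting are absorbed by the marking-counting constraints baked into $P^*_{X_\tau}$, leaving $g$ as the effective degree of each non-zero polynomial piece on every chamber of $\tilde{\mathcal{H}}^{n_1,n_2}$.
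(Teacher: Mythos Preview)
Your proposal does not address the stated theorem at all. The statement you were asked to prove is \cref{thm-weight_lifting_polytope}, a purely Ehrhart-theoretic lemma asserting that a weighted lattice-point sum $\sum_{z\in P_X(c)\cap\mathbb Z^m}w(z)$, where the weight $w(z)$ is itself the lattice-point count of a parametric polytope $Q(z)$, equals the unweighted lattice-point count of an explicit higher-dimensional polytope $P_X^*(c)$. What you have written is instead a sketch of the proof of \cref{thm-quasipolynomiality}, the main result of the paper on piecewise quasipolynomiality of double tropical Welschinger invariants; indeed, your argument \emph{invokes} \cref{thm-weight_lifting_polytope} as a black box rather than proving it.

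The paper does not supply its own proof of \cref{thm-weight_lifting_polytope} either: it is quoted from \cite{de2024sums}. The argument there is essentially a Fubini count. Fixing $z\in P_X(c)\cap\mathbb Z^m$, the fibre of the projection $P_X^*(c)\to P_X(c)$ over $z$ is exactly $\{y\in\mathbb R^k: Cy=\sum z_id_i+e,\ y\ge 0\}=Q(z)$, so the number of integral $y$ in that fibre is $|Q(z)\cap\mathbb Z^k|=w(z)$; summing over $z$ gives $|P_X^*(c)\cap\mathbb Z^{m+k}|=\sum_z w(z)$. No floor diagrams, correspondence theorems, or flow polytopes enter. If you wish to write a self-contained proof, that one-paragraph fibrewise argument is what is required; your current text, while a reasonable outline of Section~\ref{sec:pqp}, is simply a proof of a different theorem.
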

In what follows we derive two key technical propositions. Consider the lattice $\Lambda$ given by all the vectors in $\mathbb Z^m$ having even entries and let denote by $\mathbbm 1\in\mathbb Z^m$ the vector with entries equal to $1$. Let 
\begin{equation}
    \Lambda_{[m]}=\mathbbm 1+\Lambda,\qquad\Lambda_I=e_I+\Lambda
\end{equation}
be the cosets of $\Lambda$ in $\mathbb Z^m$. Here, $I=\{i_1,\dots,i_k\}\subset[m]$ of size $k=0,1,\dots,m-1$ and $e_I=\sum_{i\in I}e_i$ where $e_i$ is the vector having $1$ in the $i$-th entry and $0$ elsewhere. Define $\pi_X:\mathbb Z^m\to\mathbb R$ as the function
\begin{equation}\label{eq-map}
    \pi_X(z)=
    \begin{cases}
        1 &\text{if }z\in\Lambda_{[m]}\text{ and } z_i\geq0\\
        0 &\text{if }z\in\Lambda_I\text{ for some }I\subset [m].
    \end{cases}
\end{equation}\vspace*{-\baselineskip}
\begin{proposition}\label{cor-Ehrhart_quasipoly}
    The weighted vector partition function $\mathcal P_{X,\pi_X}(c)$ is piecewise quasipolynomial relative to the chamber complex $\text{Ch}(X)$. In particular, each nonzero quasipolynomial piece has degree $|X|-\text{rank}(X)$.
\end{proposition}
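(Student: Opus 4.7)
The approach is to recognise $\pi_X$ as an Ehrhart quasipolynomial, apply \cref{thm-weight_lifting_polytope} to turn the weighted count into an ordinary vector partition function, and conclude via \cref{thm-pqp_vector_partition_map}.

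For the first step, I would introduce the parametric polytope
\[
Q(z) = \bigl\{y \in \mathbb R^m : 2y = z - \mathbbm 1,\ y \geq 0 \bigr\}.
\]
The underlying linear system has the unique real solution $y = (z - \mathbbm 1)/2$, which lies in $\mathbb Z^m_{\geq 0}$ precisely when every $z_i$ is a positive odd integer. Because the support of the sum defining $\mathcal P_{X, \pi_X}(c)$ is contained in $P_X(c) \cap \mathbb Z^m \subset \mathbb Z^m_{\geq 0}$, on this support $|Q(z) \cap \mathbb Z^m|$ coincides with $\pi_X(z)$, and so it may serve as an Ehrhart weight without changing the value of $\mathcal P_{X, \pi_X}(c)$.

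Applying \cref{thm-weight_lifting_polytope} with $C = 2I_m$, $d_i = e_i$, $e = -\mathbbm 1$ yields the weight lifting polytope
\[
P_X^{*}(c) = \bigl\{(z, y) \in \mathbb R^{2m} : Xz = c,\ z - 2y = \mathbbm 1,\ z, y \geq 0 \bigr\}.
\]
Eliminating $z$ via $z = 2y + \mathbbm 1$ (so that $z \geq 0$ is automatic) shows that $|P_X^{*}(c) \cap \mathbb Z^{2m}|$ equals the number of $y \in \mathbb Z^m_{\geq 0}$ satisfying $2Xy = c - X\mathbbm 1$. Thus
\[
\mathcal P_{X, \pi_X}(c) = \mathcal P_{2X}(c - X\mathbbm 1).
\]
Since rescaling columns by $2$ does not alter any cone $\sigma_Y = \mathrm{cone}(Y)$, we have $\mathrm{Ch}(2X) = \mathrm{Ch}(X)$; \cref{thm-pqp_vector_partition_map} then furnishes the piecewise quasipolynomiality, and the degree of each nonzero piece is $|2X| - \mathrm{rank}(2X) = |X| - \mathrm{rank}(X)$.

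The main point I expect to need care is the translation by the fixed integer vector $X\mathbbm 1$: strictly, \cref{thm-pqp_vector_partition_map} gives a quasipolynomial on each chamber of $\mathrm{Ch}(X)$ in the argument of $\mathcal P_{2X}$, whereas $\mathcal P_{X, \pi_X}$ is a function of $c$, so the induced decomposition is $\mathrm{Ch}(X) + X\mathbbm 1$ rather than $\mathrm{Ch}(X)$ itself. Because quasipolynomials are stable under integral translations and the shift is by a single fixed lattice vector, the translated complex has the same combinatorial structure as $\mathrm{Ch}(X)$ and the pullback function remains piecewise quasipolynomial with the same degree bound; verifying that the two chamber decompositions can be identified (so that the restriction to each chamber of $\mathrm{Ch}(X)$ is indeed a single quasipolynomial) is the essential bookkeeping required to match the statement of the proposition verbatim.
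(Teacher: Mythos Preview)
Your argument tracks the paper's proof through its essential moves: the same parametric polytope $Q(z)=\{y:2y=z-\mathbbm 1,\ y\ge 0\}$, the same appeal to \cref{thm-weight_lifting_polytope}, and the same conclusion via \cref{thm-pqp_vector_partition_map}. The one genuine deviation is that, instead of retaining the lifted configuration
\[
\tilde X=\{(a_1,e_1),\dots,(a_m,e_m),(0,-2e_1),\dots,(0,-2e_m)\}\subset\mathbb Z^{d+m}
\]
and working in the higher-dimensional space as the paper does, you eliminate $z=2y+\mathbbm 1$ from the weight-lifting polytope to obtain the clean identity $\mathcal P_{X,\pi_X}(c)=\mathcal P_{2X}(c-X\mathbbm 1)$. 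This buys you something: you stay in dimension $d$, you bypass the verification that $\tilde X$ is pointed (which occupies most of the paper's argument), and the degree count $|2X|-\mathrm{rank}(2X)=|X|-\mathrm{rank}(X)$ becomes transparent. The paper's route, by contrast, keeps the construction parallel to the general mechanism of \cref{thm-weight_lifting_polytope}, which is what is reused verbatim in \cref{cor-EhrhartY_qp}.

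Your worry about the translation is legitimate, and it is worth knowing that the paper's proof carries exactly the same gap in a different guise. The paper asserts that $\mathrm{Ch}(X)=\pi(\mathrm{Ch}(\tilde X))$ and concludes; but projecting cones through the origin is not the same as intersecting $\mathrm{Ch}(\tilde X)$ with the affine slice $\{(c,\mathbbm 1):c\in\mathbb R^d\}$, and if one actually computes that slice one finds precisely the translated complex $X\mathbbm 1+\mathrm{Ch}(X)$ that you identified. So the ``bookkeeping'' you describe---checking that on each chamber $\tau$ of $\mathrm{Ch}(X)$ the function is governed by a single quasipolynomial despite the shift---is needed in both versions, and you have been more scrupulous than the paper in isolating it. The substantive input that closes it is that $\mathcal P_{X,\pi_X}(c)$ vanishes on $\tau\setminus(X\mathbbm 1+\mathrm{cone}(X))$ (since $\pi_X(z)\neq 0$ forces $z\ge\mathbbm 1$), together with the standard fact that the quasipolynomial for a vector partition function on a chamber extends correctly across this bounded boundary region.
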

\begin{proof}
    First of all, note that $\pi_X(z_1,\dots,z_m)=|Q(z_1,\dots,z_m)\cap\mathbb Z^m|$, where
        \begin{equation}
        Q(z_1,\dots,z_m)=\{y\in\mathbb Z^m|2y_i=z_i-1,\>y_i\geq0\text{ for all }i\in[m]\}.
    \end{equation}
    By \cref{thm-weight_lifting_polytope}, the function $\mathcal P_{X,\pi_X}(c)$ coincides with the vector partition function $\mathcal P_{\tilde X}(\tilde c)$, where $\tilde X=\{(a_1,e_1),\dots,(a_m,e_m),(0,-2e_1),\dots,(0,-2e_m)\}\subset\mathbb Z^{d+m}$ and $\tilde c=(c,\sum e_i)\in\mathbb Z^{d+m}$. Note that $\tilde X$ is a pointed vector configuration. Indeed, if $V\subseteq\mathbb R^{m+d}$ is a vector space such that $V\subseteq\text{cone}(\tilde X)$ and $\{v_1,\cdots,v_n\}$ is a basis of $V$, then
    \begin{equation}
        v_i=\sum_{j=1}^m\lambda_{ij}(a_j,e_j)+\sum_{j=1}^m\mu_{ij}(0,-2e_j)=\sum_{j=1}^m(\lambda_{ij}a_j,(\lambda_{ij}-2\mu_{ij})e_j)\qquad\text{for all }i\in[n]
    \end{equation}
    where $\lambda_{ij},\mu_{ij}\in\mathbb R_{\geq0}$ for all $j\in[m]$. Since $V$ is a vector space, then $-v_i\in V\subseteq\text{cone}(\tilde X)$, but $X=\{a_1,\dots,a_m\}$ is a pointed vector configuration, hence $\lambda_{ij}=0$ for all $i\in[n]$ and $j\in[m]$, meaning that $V\subseteq\text{cone}(\tilde Y)$ where $\tilde Y=\{(0,-2e_1),\dots,(0,-2e_m)\}$ that is a contradiction. Thus the function $\mathcal P_{\tilde X}$ satisfies the hypothesis of \cref{thm-pqp_vector_partition_map}, so the function $\mathcal P_{\tilde X}$ is piecewise quasipolynomial relative to $\text{Ch}(\tilde{X})$. Consider $\pi:\mathbb R^{d+m}\to\mathbb R^d$ the projection on the first $d$ coordinates, then $\text{Ch}(X)=\pi(\text{Ch}(\tilde X))$ and in particular $\mathcal P_{\tilde X}$ restricted to $\mathbb R^d$ is piecewise quasipolynomial relative to $\text{Ch}(X)$.
\end{proof}
Let $k=2h$ be an even integer with $h\geq0$. Let $Y=\{a_{j_1},\dots,a_{j_k}\}\subseteq E=\{a_{t_1},\dots,a_{t_r}\}\subseteq X$ and $\Lambda$ as above. Let $J\subseteq\{j_1,\dots,j_k\}$ and define the following set
\begin{equation}
    \mathsf P_J=\{z\in(\mathbb Z_{>0})^m|z\in\Lambda_{[m]\setminus J}\text{ and }z_{j_i}=z_{j_{i+1}}\text{ for all }i=1,3,\dots,k-1\}
\end{equation}Define the function $\pi_Y:\mathbb Z^m\to\mathbb R$ as 
\begin{equation}
    \pi_Y(z)=
    \begin{cases}
        \displaystyle\prod_{i=1}^r z_{t_i} &\text{if }z\in\mathsf P_{J}\text{ for some } J\subseteq\{j_1,\dots,j_k\}\\
        0 &\text{if }z\in\Lambda_I\text{ for some } I \text{ with } I\subset[m]\setminus\{j_1,\dots,j_k\}.
    \end{cases}
\end{equation}\vspace*{-\baselineskip}
\begin{proposition}\label{cor-EhrhartY_qp}
    The weighted vector partition function $\mathcal P_{X,\pi_Y}(c)$ is piecewise quasipolynomial relative to the chamber complex $\text{Ch}(X)$.
\end{proposition}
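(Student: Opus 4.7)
The plan is to extend the strategy used in the proof of \cref{cor-Ehrhart_quasipoly}: we decompose $\pi_Y$ as a finite sum of Ehrhart quasipolynomials, lift each summand via \cref{thm-weight_lifting_polytope} to the lattice-point count of a higher-dimensional pointed vector partition function, and conclude via \cref{thm-pqp_vector_partition_map}. Since a finite sum of functions piecewise quasipolynomial on $\text{Ch}(X)$ is again piecewise quasipolynomial on $\text{Ch}(X)$, it suffices to treat each summand individually.

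First I would split $\pi_Y$ according to which of the entries $z_{j_1},\dots,z_{j_k}$ are even:
\begin{equation*}
    \pi_Y(z)=\sum_{J\subseteq\{j_1,\dots,j_k\}}w_J(z),\qquad w_J(z)=\mathbbm{1}_{\Lambda_{[m]\setminus J}}(z)\prod_{i=1}^r z_{t_i}.
\end{equation*}
Each $w_J$ is an Ehrhart quasipolynomial in the sense of \cref{def-Ehrhartqp}: the parity condition $z\in\Lambda_{[m]\setminus J}$ is recorded by nonnegative integer auxiliary variables $y_i$ satisfying $2y_i=z_i-1$ for $i\notin J$ and $2y_i=z_i$ for $i\in J$ (exactly as in the proof of \cref{cor-Ehrhart_quasipoly}), while the polynomial factor $\prod_{i=1}^r z_{t_i}$ counts the nonnegative integer vectors $u\in\mathbb{Z}^r$ subject to $u_s\leq z_{t_s}-1$, realised as equalities via standard slack variables (this matches even in the degenerate case $z_{t_s}=0$, in which the product vanishes and the polytope is empty). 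Concatenating both families of auxiliary constraints produces a parametric polytope $Q_J(z)$ whose integer points are counted by $w_J(z)$.

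Applying \cref{thm-weight_lifting_polytope} to the pair $(X,w_J)$ then yields an augmented vector configuration $\tilde X_J$ so that $\mathcal{P}_{X,w_J}(c)=\mathcal{P}_{\tilde X_J}(\tilde c_J)$. Pointedness of $\tilde X_J$ is verified by the same projection-to-the-first-$d$-coordinates argument used in the proof of \cref{cor-Ehrhart_quasipoly}: any linear subspace contained in $\text{cone}(\tilde X_J)$ projects to a linear subspace of $\text{cone}(X)$, which is trivial by pointedness of $X$, while the auxiliary vectors generate a pointed cone by inspection. \cref{thm-pqp_vector_partition_map} then gives piecewise quasipolynomiality of $\mathcal{P}_{\tilde X_J}$ on $\text{Ch}(\tilde X_J)$, and restricting to the first $d$ coordinates yields piecewise quasipolynomiality of $\mathcal{P}_{X,w_J}$ on $\text{Ch}(X)$. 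Summing over the finitely many subsets $J\subseteq\{j_1,\dots,j_k\}$ concludes the proof.

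The main obstacle is the bookkeeping required to combine two conceptually different kinds of auxiliary variables, parity witnesses and polynomial-weight witnesses with their slacks, into a single parametric polytope, and to verify that the resulting lifted configuration $\tilde X_J$ remains pointed; once these points are checked, the argument is a direct transcription of that for \cref{cor-Ehrhart_quasipoly}.
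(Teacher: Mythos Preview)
Your proof is correct, but the paper takes a slightly more direct route that avoids the decomposition step entirely. Rather than splitting $\pi_Y$ into $2^k$ summands indexed by the parity pattern $J\subseteq\{j_1,\dots,j_k\}$ of the $Y$-coordinates, the paper realises $\pi_Y$ itself as the lattice-point count of a \emph{single} parametric polytope: for indices $i\in Y$ one imposes only the product-witness constraint $y_i+y_{m+i}=z_i-1$ (which contributes the factor $z_i$ regardless of parity), while for indices $i\notin Y$ one additionally imposes the parity constraint $2y_{\bullet}=z_i-1$. One application of \cref{thm-weight_lifting_polytope} then suffices. Your decomposition achieves the same end by recording the parity of each $Y$-coordinate explicitly and summing; this is conceptually transparent and reuses \cref{cor-Ehrhart_quasipoly} almost verbatim, at the cost of handling $2^k$ lifted configurations and verifying pointedness for each. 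Both arguments rely on the same projection reasoning to pass from $\text{Ch}(\tilde X)$ back to $\text{Ch}(X)$, so neither gains or loses anything on that front.
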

\begin{proof}
    In order to avoid confusion, we consider $Y=\{a_{r-k+1},\dots,a_r\},\> E=\{a_1,\dots,a_r\}$. Let us consider the parametric polytope $Q(z_1,\dots,z_m)$ given by the vectors $y=(y_1,\dots,y_{m+2r-k})\in(\mathbb Z_{\geq0})^{m+2r-k}$ such that:
    \begin{itemize}
        \item $y_i+y_{m+i}=z_i-1$ for $1\leq i\leq r$;
        \item $2y_i=z_i-1$ for $r+1\leq i\leq m$;
        \item $2y_{m+r+i}=z_{i}-1$ for $1\leq i\leq r-k$;
        \item $y_{r-k+i}+y_{m+r-k+i}-(y_{r-k+i+1}+y_{m+r-k+i+1})=0$ for $i=1,3,\dots,k-1$.
    \end{itemize}
%\begin{equation}
%    Q(z_1,\dots,z_m)=\biggl\{y\in\mathbb Z^{m+2r-k}\bigg|
%    \begin{cases}
%        2y_i=z_i+1 &\text{if }i\in[m]\text{ s. t. }a_i\in X\setminus Y\\
%        y_{i1}+y_{i2}=z_i+1 &\text{if }i\in[m]\text{ s. t. }a_i\in E
%    \end{cases}
%    \biggr\}\biggr\}.
%\end{equation}
    Observe that the first and last conditions together imply $z_{r-k+i}=z_{r-k+i+1}$. Note that $\pi_Y(z_1,\dots,z_m)=|Q(z_1,\dots,z_m)\cap\mathbb Z^{m+2r-k}|$. Hence, by \cref{thm-weight_lifting_polytope}, the function $\mathcal P_{X,\pi_Y}(c)$ equals the vector partition function $\mathcal P_{\tilde X}(\tilde c)$, where $\tilde X\subset\mathbb Z^{d+m+r-h}$ is the vector configuration decomposed as $\tilde X=\tilde X_1\cup\tilde X_2\cup\tilde X_3$ such that
    \small
    \begin{align}
        &\tilde X_1=\{(a_i,e_i+e_{m+i})\}_{i=1}^{r-k}\cup\{(a_i,e_i)\}_{i=r-k+1}^k\\
        \tilde X_2=\{(0,-e_i)\}_{i=1}^{r-k}\cup\biggl(\bigcup_{\substack{i=1\\i\text{ odd}}}^{k-1}&\{(0,-e_{r-k+i}+e_{m+r-k+i}),(0,-e_{r-k+i+1}-e_{m+r-k+i})\}\biggr)\cup\{(0,-2e_i)\}_{i=r+1}^m\\
        \tilde X_3=\{(0,-e_i)\}_{i=1}^{r-k}\cup\biggl(\bigcup_{\substack{i=1\\i\text{ odd}}}^{k-1}&\{(0,-e_{r-k+i}+e_{m+r-k+i}),(0,-e_{r-k+i+1}-e_{m+r-k+i})\}\biggr)\cup\{(0,-2e_{m+i})\}_{i=1}^{r-k}
    \end{align}
    \normalsize
    with $\tilde c=(c,\sum_{i=1}^{m+r-k} e_i,0)\in\mathbb Z^{d+m+r-h}$. The order of the blocks is not random and it is such that the column vectors in $\tilde X_1$ correspond to $z_1,\dots,z_m$, the column vectors in $\tilde X_2$ correspond to $y_1,\dots,y_m$ and the column vectors in $\tilde X_3$ correspond to $y_{m+1},\dots,y_{m+2r-k}$. Furthermore, the vector configuration $\tilde X$ is pointed. To see this, we only have to verify that there is no vector space contained in the cone generated by the vectors of the form
    \begin{equation}
        \bigcup_{\substack{i=1\\i\text{ odd}}}^{k-1}\{(0,-e_{r-k+i}+e_{m+r-k+i}),(0,-e_{r-k+i+1}-e_{m+r-k+i})\}.
    \end{equation}
    However, this immediately follows from the sign of $e_{r-k+i}$, $i=1,\dots,k$. The proof is then the same of \cref{cor-Ehrhart_quasipoly}.
\end{proof}

\section{Piecewise quasipolynomiality of double tropical Welschinger invariants}\label{sec:pqp}

\subsection{Notation}

Fix $g,n_1,n_2\in\mathbb Z_{\geq0}$, and $(\textbf{d}^r;\textbf{d}^l)\in(\mathbb Z_{>0})^{n+m}$. Let $(\mathcal D,m)$ be a floor diagram and denote by $\tilde{\mathcal D}$ the graph obtained from $\mathcal D$ by removing all the weights, but such that the underlying graph $\tilde{\mathcal D}$ inherits the partition $V=L\cup C\cup R$ of the vertices and the ordering given by $m$. We call $\mathcal G$ the collection of such graphs that contribute to $G_{(\textbf{d}^r;\textbf{d}^l),g}^{n_1,n_2}(\textbf{x},\textbf{y},\textbf{c})$. In particular, $\mathcal G$ is finite and depends only on $g,a$ and $n_1+n_2$. We denote by $Perm(\mathsf{R})$ and $Perm(\mathsf{L})$ the sets of permutations of the multisets $\mathsf{R}$ and $\mathsf{L}$ respectively and let $r(\textbf{c})\in Perm(\mathsf{R})$ and $l(\textbf{c})\in Perm(\mathsf{L})$. For each graph $(H,m)\in\mathcal G$, let $E(H)$ and $V(H)$ be the sets of edges and vertices of $H$ respectively and define the set $W_{H,r(\textbf{c})-l(\textbf{c})}(\textbf{x},\textbf{y},\textbf{c})$ of weights $w:E(H)\to\mathbb N$ for which the resulting weighted graph is a floor diagram for $P(\textbf{c},\textbf{d})$, i.e. such that the $i$-th black vertex has divergence $r_i(\textbf{c})-l_i(\textbf{c)}$ and white divergence sequence $(\textbf{x},\textbf{y})$. By construction, the obtained floor diagram has genus $g$ and multidegree \textbf{d}. Finally, call $\mathbb R^{X}=\{\textbf w:X\to\mathbb R\}$ and let $\pi_{E(H)}:\mathbb R^{E(H)}\to\mathbb R$ be the polynomial map defined by 
\begin{equation}
    \pi_{E(H)}(\textbf{w})=
    \begin{cases}
        1 &\text{if }\textbf{w}(e)\equiv1\bmod 2 \text{ for all }e\in E(H)\\
        0 &\text{otherwise.}
    \end{cases}
\end{equation}

\subsection{Proof of Theorem \ref{thm-quasipolynomiality}}\label{subsec:proof}

First, we note that the domain of the function $G_{(\textbf{d}^r;\textbf{d}^l),g}^{n_1,n_2}(\textbf{x},\textbf{y},\textbf{c})$ is 
\begin{equation}
    \Lambda\cap\{(\textbf{x},\textbf{y},\textbf{c})\in\mathbb Z^{n_1+n_2+n+m}|c_1^r>\dots>c_n^r,\>c_1^l<\dots<c_m^l\},
\end{equation}
where we see $\Lambda$ as a subset of $\mathbb Z^{n_1+n_2+n+m}$. Define
\begin{equation}
    G_{H,r(\textbf{c})-l(\textbf{c})}(\textbf{x},\textbf{y},\textbf{c})=\sum_{\textbf{w}\in W_{H,r(\textbf{c})-l(\textbf{c})}}\pi_{E(H)}(\textbf{w}).
\end{equation}
Note that $G_{H,r(\textbf{c})-l(\textbf{c})}(\textbf{x},\textbf{y},\textbf{c})$ depends on the order of the entries of \textbf{y}, while in $G_{(\textbf{d}^r;\textbf{d}^l),g}^{n_1,n_2}(\textbf{x},\textbf{y},\textbf{c})$ we have to consider all the distinct orders for $\textbf{y}$:
\begin{equation}
    G_{(\textbf{d}^r;\textbf{d}^l),g}^{n_1,n_2}(\textbf{x},\textbf{y},\textbf{c})=\frac{1}{\beta_1!\beta_2!\cdots\tilde\beta_1!\tilde\beta_2!\cdots}\sum_{(H,m)\in\mathcal G}\sum_{(r(\textbf{c}),l(\textbf{c}))}\sum_{\sigma\in S_{n_2}}G_{H,r(\textbf{c})-l(\textbf{c})}(\textbf{x},\sigma(\textbf{y}),\textbf{c})
\end{equation}
\textbf{Step 1: express $G_{(\textbf{d}^r;\textbf{d}^l),g}^{n_1,n_2}(\textbf x,\textbf y,\textbf{c})$ as a weighted partition function.} Recall that the divergence of a vertex is defined as 
\begin{equation}
    \text{div}(v)=\sum_{e:v\to v'}w(e)-\sum_{e:v'\to v}w(e)
\end{equation} 
and that the adjacency matrix of the graph $H$ is $A_H\in\mathbb R^{V(H)\times E(H)}$ which is, in our convention:
\begin{equation}
    A_H(v,e)=
    \begin{cases}
        1 &\text{when}\>e:v\to v'\>\text{for some}\>v'\\
        -1 &\text{when}\>e:v'\to v\>\text{for some}\>v'\\
        0 &\text{otherwise.}
    \end{cases}
\end{equation}
Note that the columns of the matrix $A_H$ are a subset of the root system $A_{|E(H)|-1}$ (see \cite[Example 4.4]{ardila2017double}). Now, take $\textbf{k}=\textbf{k}(\textbf{c})\in\mathbb R^{V(H)}$ and define the flow polytope
\begin{align}
    \Phi_H(\textbf{k}(\textbf{c}))&=\{\textbf{w}\in\mathbb R^{E(H)}|\>\textbf{w}(e)\geq0\>\text{for all}\> e\in E(H),\>\text{div}(v)=\textbf{k}(v)\>\text{for all vertices}\>v\}\\
    &=\{\textbf{w}\in\mathbb R^{E(H)}|\>A_H\textbf{w}=\textbf{k}(\textbf{c}),\>\textbf{w}\geq0\}.
\end{align}
If $\textbf{k}(\textbf{c})$ is the vector which entries are given by $(\textbf{x},\textbf{y})$ for the white vertices and $r(\textbf{c})-l(\textbf{c})$ for the black vertices, then $W_{H,r(\textbf{c})-l(\textbf{c})}(\textbf{x},\textbf{y},\textbf{c})=\Phi_H(\textbf{k}(\textbf{c}))\cap\mathbb Z^{E(H)}$.\\
Define the weighted partition function 
\begin{equation}
    \mathcal P_{H,\pi_{E(H)}}(\textbf{k}(\textbf{c}))=\sum_{\textbf{w}\in\Phi_H(\textbf{k}(\textbf{c}))\cap\mathbb Z^{E(H)}}\pi_{E(H)}(\textbf{w})
\end{equation}
and consider the hyperplane $\{\textbf{k}\in\mathbb R^{V(H)}|\sum\textbf{k}(v)=0\}$. Let $H_{r(\textbf{c})-l(\textbf{c})}\subset\{\textbf{k}\in\mathbb R^{V(H)}|\sum\textbf{k}(v)=0\}$ be the subspace determined by the equations
\begin{equation}
    \textbf{k}(w_i)=x_i,\quad \textbf{k}(w_j)=y_j,\quad \textbf{k}(b_i)=r_i(\textbf{c})-l_i(\textbf{c})\>\text{for all black}\> b_i.
\end{equation}
The restriction of $\mathcal P_{H,\pi_{E(H)}}(\textbf{k}(\textbf{c}))$ to the subspace $H_{r(\textbf{c})-l(\textbf{c})}$ is the map $G_{H,r(\textbf{c})-l(\textbf{c})}(\textbf{x},\textbf{y},\textbf{c})$.\\
\textbf{Step 2: the map $G_{(\textbf{d}^r;\textbf{d}^l),g}^{n_1,n_2}(\textbf{x},\textbf{y},\textbf{c})$ is piecewise quasipolynomial.} In the notation of \cref{cor-Ehrhart_quasipoly}, $X$ is given by the columns of $A_H$, which do not depend on the vector $\textbf{c}$. Therefore, by \cref{cor-Ehrhart_quasipoly}, the weighted partition function $\mathcal P_{H,\pi_{E(H)}}(\textbf{k}(\textbf{c}))$ is piecewise quasipolynomial relative to $\mathrm{Ch}(X)$. By \cite[Example 4.4]{ardila2017double}, the chambers in $\mathrm{Ch}(X)$ coincide with the chambers of the discriminant arrangement in $\{\textbf{k}\in\mathbb R^{V(H)}|\sum\textbf{k}(v)=0\}$. Recall that this arrangement consists of the hyperplanes
\begin{equation}
    \sum_{v'\in V'}\textbf{k}(v')=0\qquad\text{for all subsets }V'\subseteq V.
\end{equation}
In particular, $G_{H,r(\textbf{c})-l(\textbf{c})}(\textbf{x},\textbf{y},\textbf{c})$ is piecewise quasipolynomial relative to the chambers of the discriminant arrangement in $H_{r(\textbf{c})-l(\textbf{c})}$. Denote this discriminant arrangement by $S_{r(\textbf{c})-l(\textbf{c})}$. When we symmetrise, the result $\displaystyle\sum_{\sigma\in S_{n_2}}G_{H,\textbf{c},r-l}(\textbf{x},\sigma(\textbf{y}))$ is still piecewise quasipolynomial relative to the same chambers, since the chamber structure is fixed under permutation of the $n_2$ \textbf{y} variables. What remains to prove is that 
\begin{equation}
    \sum_{(r(\textbf{c}),l(\textbf{c}))}\sum_{\sigma\in S_{n_2}}G_{H,r(\textbf{c})-l(\textbf{c})}(\textbf{x},\sigma(\textbf{y}),\textbf{c})
\end{equation}
is piecewise quasipolynomial. For fixed \textbf{c}, when we sum over all the pairs $(r(\textbf{c}),l(\textbf{c}))$, the resulting map is piecewise quasipolynomial relative to the chambers of the common refinement of the hyperplane arrangements $S_{r(\textbf{c})-l(\textbf{c})}$, in other words $\mathcal{H}^{n_1,n_2}(\textbf{c})=\displaystyle\bigcup_{(r(\textbf{c}),l(\textbf{c}))}S_{r(\textbf{c})-l(\textbf{c})}$. Thus, when we let \textbf{c} vary, the resulting function is piecewise quasipolynomial relative to the chambers induced by $\tilde{\mathcal{H}}^{n_1,n_2}$.\\
\textbf{Step 3: $G_{(\textbf{d}^r;\textbf{d}^l),g}^{n_1,n_2}(\textbf{x},\textbf{y},\textbf{c})$ has degree $g$.} Note that
\begin{equation}
    \text{dim}(\Phi_H(\textbf{k}))=|E(H)|-\text{rank}(A_H)=|E(H)|-(|E(H)|-g)=g
\end{equation}
and since each non-zero polynomial piece of $\pi_{E(H)}$ has degree $0$, we have that $\mathcal{P}_{H,\pi_{E(H)}}$ has degree $g$, hence $G_{(\textbf{d}^r;\textbf{d}^l),g}^{n_1,n_2}(\textbf{x},\textbf{y},\textbf{c})$ has degree $g$.

\subsection{Example}\label{subsec:example1}

In this section, we provide an example that illustrates the result of \cref{thm-quasipolynomiality}. Let us consider the following data: let $k\in\mathbb Z_{\geq0}$ and consider $\textbf{c}^r=(k,0)$, $\textbf{c}^l=(0,0)$, $\textbf{d}^r=(1,1)$, $\textbf{d}^l=d^l=2$, $a=2$, $n_1=1$, $n_2=2$ and $g=1$. The $h$-transverse polygon we are considering for this example is depicted in \cref{fig:h-transverse_polygon}. We have the following multisets
\begin{equation}
    \mathsf{R}=\{k,0\}\qquad \mathsf{L}=\{0,0\}
\end{equation}
that originate the permutations $r_1=(k,0)$, $r_2=(0,k)$ and $l=(0,0)$. Therefore, black vertices can have divergences given by
\begin{equation}
    r_1-l=(k,0),\qquad r_2-l=(0,k).
\end{equation}
We compute the maps $G^{1,2}_{(\textbf{d}^r;\textbf{d}^l),\textbf{c},1}(x_1,y_1,y_2)$ and $G^{1,2}_{(\textbf{d}^r;\textbf{d}^l),1}(x_1,y_1,y_2,k)$ in only one chamber of the hyperplane arrangement. The hyperplane arrangement in
\begin{equation}
    \Lambda=\{(x_1,y_1,y_2)\in\mathbb Z^3|x_1+y_1+y_2+k=0\}
\end{equation}
is given by hyperplanes
\begin{align}
    x_1=0,\qquad &x_1+k=0,\qquad y_1=0,\qquad y_1+k=0,\qquad y_2=0,\qquad y_2+k=0.
\end{align}

\begin{figure}
    \centering
    \tikzset{every picture/.style={line width=0.75pt}} %set default line width to 0.75pt        

        \begin{tikzpicture}[x=0.75pt,y=0.75pt,yscale=-1,xscale=1]
        %uncomment if require: \path (0,300); %set diagram left start at 0, and has height of 300

        %Shape: Polygon [id:ds17488778839130092] 
        \draw   (211,120) -- (210,180) -- (62,180) -- (62,91) -- (152,91) -- cycle ;

        % Text Node
        \draw (259,98) node [anchor=north west][inner sep=0.75pt]
            {$ \begin{array}{l}
                \mathbf{c}^{r} =( k,0) ,\ \mathbf{c}^{l} =( 0,0)\\
                \mathbf{d}^{r} =( 1,1) ,\mathbf{d}^{l} =2\\
                a=2,\ n_{1} =1,\ n_{2} =2
            \end{array}$};

    \end{tikzpicture}

    \caption{}
    \label{fig:h-transverse_polygon}
\end{figure}

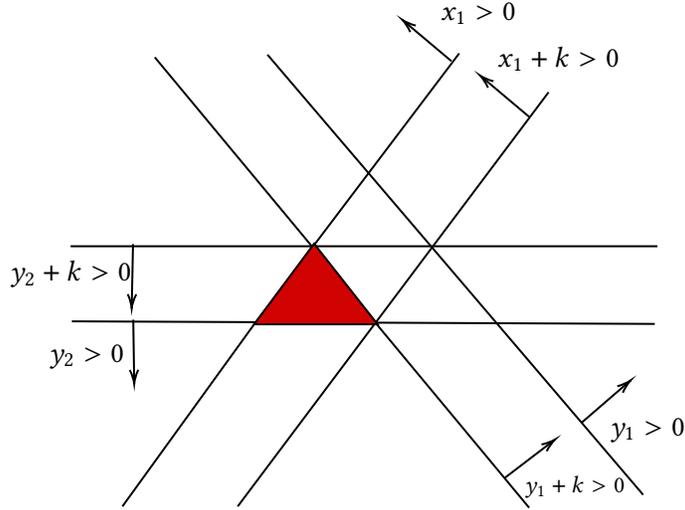
\begin{figure}
    \centering
    \tikzset{every picture/.style={line width=0.75pt}} %set default line width to 0.75pt        

    \begin{tikzpicture}[x=0.70pt,y=0.70pt,yscale=-0.70,xscale=0.70]
        %uncomment if require: \path (0,708); %set diagram left start at 0, and has height of 708
    
        %Straight Lines [id:da1392665430229072] 
        \draw    (390,452) -- (491.71,311.3) -- (650,102) ;
        %Straight Lines [id:da6693041887501123] 
        \draw    (479,452) -- (719,132) ;
        %Straight Lines [id:da647155129306513] 
        \draw    (350,251) -- (803,251) ;
        %Straight Lines [id:da9579159021602939] 
        \draw    (351,309) -- (801,311) ;
        %Straight Lines [id:da14595530485539365] 
        \draw    (792,443) -- (502,103) ;
        %Straight Lines [id:da4946947233704948] 
        \draw    (704,453) -- (415,105) ;
        %Straight Lines [id:da2994470874358284] 
        \draw    (644,108) -- (607.53,77.29) ;
        \draw [shift={(606,76)}, rotate = 40.1] [color={rgb, 255:red, 0; green, 0; blue, 0 }  ][line width=0.75]    (10.93,-3.29) .. controls (6.95,-1.4) and (3.31,-0.3) .. (0,0) .. controls (3.31,0.3) and (6.95,1.4) .. (10.93,3.29)   ;
        %Straight Lines [id:da587505475862875] 
        \draw    (704,151) -- (667.53,120.29) ;
        \draw [shift={(666,119)}, rotate = 40.1] [color={rgb, 255:red, 0; green, 0; blue, 0 }  ][line width=0.75]    (10.93,-3.29) .. controls (6.95,-1.4) and (3.31,-0.3) .. (0,0) .. controls (3.31,0.3) and (6.95,1.4) .. (10.93,3.29)   ;
        %Straight Lines [id:da7396797328116171] 
        \draw    (744.83,387) -- (780.91,355.83) ;
        \draw [shift={(782.42,354.53)}, rotate = 139.18] [color={rgb, 255:red, 0; green, 0; blue, 0 }  ][line width=0.75]    (10.93,-3.29) .. controls (6.95,-1.4) and (3.31,-0.3) .. (0,0) .. controls (3.31,0.3) and (6.95,1.4) .. (10.93,3.29)   ;
        %Straight Lines [id:da5193072970569319] 
        \draw    (684.76,430.28) -- (722.64,401.33) ;
        \draw [shift={(724.23,400.12)}, rotate = 142.62] [color={rgb, 255:red, 0; green, 0; blue, 0 }  ][line width=0.75]    (10.93,-3.29) .. controls (6.95,-1.4) and (3.31,-0.3) .. (0,0) .. controls (3.31,0.3) and (6.95,1.4) .. (10.93,3.29)   ;
        %Straight Lines [id:da3790203465520465] 
        \draw    (398.2,249.35) -- (397.22,297.02) ;
        \draw [shift={(397.18,299.02)}, rotate = 271.18] [color={rgb, 255:red, 0; green, 0; blue, 0 }  ][line width=0.75]    (10.93,-3.29) .. controls (6.95,-1.4) and (3.31,-0.3) .. (0,0) .. controls (3.31,0.3) and (6.95,1.4) .. (10.93,3.29)   ;
        %Straight Lines [id:da5492089170150714] 
        \draw    (398.39,307.74) -- (399.09,355.42) ;
        \draw [shift={(399.12,357.42)}, rotate = 269.16] [color={rgb, 255:red, 0; green, 0; blue, 0 }  ][line width=0.75]    (10.93,-3.29) .. controls (6.95,-1.4) and (3.31,-0.3) .. (0,0) .. controls (3.31,0.3) and (6.95,1.4) .. (10.93,3.29)   ;
        %Shape: Triangle [id:dp8024624317878942] 
        \draw  [fill={rgb, 255:red, 208; green, 2; blue, 2 }  ,fill opacity=1 ] (538,249) -- (587,311.3) -- (491.71,311.3) -- cycle ;

        % Text Node
        \draw (634,60) node [anchor=north west][inner sep=0.75pt]    {$x_{1}  >0$};
        % Text Node
        \draw (678,94) node [anchor=north west][inner sep=0.75pt]    {$x_{1} +k >0$};
        % Text Node
        \draw (763.26,381.1) node [anchor=north west][inner sep=0.75pt]    {$y_{1}  >0$};
        % Text Node
        \draw (697.69,425.86) node [anchor=north west][inner sep=0.75pt]    [font=\footnotesize]{$y_{1} +k >0$};
        % Text Node
        \draw (300.19,259.62) node [anchor=north west][inner sep=0.75pt]    {$y_{2} +k >0$};
        % Text Node
        \draw (330.4,324.23) node [anchor=north west][inner sep=0.75pt]    {$y_{2}  >0$};

    \end{tikzpicture}
    
    \caption{Hyperplane arrangement in $\Lambda$}
    \label{fig:hyperplane_arrangement}
\end{figure}

Here, we consider the chamber $\mathbf{C}$ given by the inequalities $-k<x_1<0$, $y_1<-k<0$ and $-k<y_2<0$, which is the red chamber in \cref{fig:hyperplane_arrangement}. First, we compute the map $G^{1,2}_{(\textbf{d}^r;\textbf{d}^l),\textbf{c},1}(x_1,y_1,y_2)$ in $\mathbf{C}$ for $k$ even and $k$ odd and then we put all together to obtain the map $G^{1,2}_{(\textbf{d}^r;\textbf{d}^l),1}(x_1,y_1,y_2,k)$.\\
Let us assume $k$ \textbf{even}. In order to have non-zero multiplicity, $x_1,y_1$ and $y_2$ must be odd. However, since $k$ is even and $k=-x_1-y_1-y_2$, at least one among $x_1,y_1$ and $y_2$ is even. Therefore, $G^{1,2}_{(\textbf{d}^r;\textbf{d}^l),\textbf{c},1}(x_1,y_1,y_2)=0$ for $k$ even.\\
Let us assume $k$ \textbf{odd}. If $x_1,y_1,y_2$ are odd, then $\pm (x_1+k)$, $\pm(y_1+k)$ and $\pm(y_2+k)$ are all even, hence $-w\pm(x_1+k)$, $-w\pm(y_1+k)$ and $-w\pm(y_2+k)$ are odd if and only if $w$ is odd. On the other hand $-w\pm x_1$, $-w\pm y_1$, $-w\pm y_2$ and $-w+k$ are odd if and only if $w$ is even, therefore the floor diagrams $A1$, $B2$, $B3$, $C1$, $C3$, $D1$ and $D2$ in \cref{table1} have multiplicity $0$.

For $x_1,y_1,y_2,k$ odd we have
\begin{align}
    G^{1,2}_{(\textbf{d}^r;\textbf{d}^l),\textbf{c},1}(x_1,y_1,y_2)&=10\sum_{\substack{w=0\\w\text{ odd}}}^{y_2+k}1+10\sum_{\substack{w=0\\w\text{ odd}}}^{y_1+k}1+2\sum_{\substack{w=0\\w\text{ odd}}}^{x_1+k}1\\
    &=10\frac{y_2+k+1}{2}+10\frac{y_1+k+1}{2}+2\frac{x_1+k+1}{2}\\
    &=x_1+5y_1+5y_2+11k+11.
\end{align}\vspace*{-\baselineskip}

\begin{figure}
    \centering
    \tikzset{every picture/.style={line width=0.75pt}} %set default line width to 0.75pt        

    % [inline block 0: 3 envs, 72808 chars -> data_tex | \begin{tikzpicture}[x=0.65pt,y=0.65pt,yscale=-0.75,xscale=0.75]     %uncomment if require: \path (0,496); %set diagram l...]

        
    \caption{Floor diagrams with divergence sequence $r_2-l$ contributing to $G^{1,2}_{(\textbf{d}^r;\textbf{d}^l),\textbf{c},1}(x_1,y_1,y_2)$ in $\mathbf{C}$.}
    \label{table2}
        
\end{table}\vspace*{-\baselineskip}

\begin{remark}
    The first two sums come from the multiplicities of the graphs $A2,A3$ and $B1$, while the last sum comes from the multiplicities of the graph $C2$. The factor in front of each summation is given by the possible markings of the floor diagrams. For instance, the floor diagram $B1$ has six possible markings as it is showed in \cref{fig:markings} and the floor diagrams $A2$ and $A3$ have two possible markings each.
\end{remark}
We can then write the map $G^{1,2}_{(\textbf{d}^r;\textbf{d}^l),1}(x_1,y_1,y_2,k)$ in the chamber $\mathbf{C}$
\begin{equation}
    G^{1,2}_{(\textbf{d}^r;\textbf{d}^l),1}(x_1,y_1,y_2,k)=
    \begin{cases}
        x_1+5y_1+5y_2+11k+11 &x_1,y_1,y_2,k\text{ odd}\\
        0 &\text{otherwise.}
    \end{cases}
\end{equation}

\section{Combinatorial game}\label{sec:combinatorial_game}

The search for Welschinger-type invariants for the enumeration of real curves of positive genera and passing through a configuration of points, that allows also complex conjugate points, on algebraic surfaces is an active research area \cite{shustin2015higher,itenberg2017welschinger,itenberg2018relative}. However, at present no such invariants are known, either in the algebraic setting or in the tropical one. In this section, we introduce new numbers, defined as multiplicities of $s$-real floor diagrams, which - while they fail to be invariants - provide interesting combinatorics. The goal of this section is to study their piecewise quasipolynomiality.\\
We first define an equivalence relation on vectors of sequences. Those vectors represent the divergence sequences of $s$-real floor diagrams. The idea is that vectors in the same equivalence class are attached to $s$-real floor diagrams for an $h$-transverse polygon $P(\textbf{c},\textbf{d})$ that have the same multidegree, type and genus, but with different imaginary part. The numbers we study in this section are then defined to be the sum of multiplicities of certain $s$-real floor diagrams having divergence sequence in a fixed equivalence class. We clarify the need of this equivalence relation in the following
\begin{example}
    Let us consider the same data from \cref{example-s-real-multiplicity} except for $\xi=(1,0,0,0,1,0,0001,001)$ and $\xi'=(1,0,0,0,1,002,0001,0)$. Let $\mathcal D_1$ and $\mathcal D_2$ the floor diagrams in \cref{fig:equivalence_relation}.
    \begin{figure}[H]
        \tikzset{every picture/.style={line width=0.75pt}} %set default line width to 0.75pt        

        \begin{tikzpicture}[x=0.70pt,y=0.70pt,yscale=-0.70,xscale=0.70]
        %uncomment if require: \path (0,688); %set diagram left start at 0, and has height of 688

            %Straight Lines [id:da8270263958446589] 
            \draw  [dash pattern={on 4.5pt off 4.5pt}]  (82,308) -- (83,444) ;
            %Straight Lines [id:da8206917222179047] 
            \draw  [dash pattern={on 4.5pt off 4.5pt}]  (400,309) -- (402,428) ;
            %Shape: Circle [id:dp1306920204475729] 
            \draw   (38,330.5) .. controls (38,326.36) and (41.36,323) .. (45.5,323) .. controls (49.64,323) and (53,326.36) .. (53,330.5) .. controls (53,334.64) and (49.64,338) .. (45.5,338) .. controls (41.36,338) and (38,334.64) .. (38,330.5) -- cycle ;
            %Shape: Circle [id:dp3541599110605126] 
            \draw   (38,434.5) .. controls (38,430.36) and (41.36,427) .. (45.5,427) .. controls (49.64,427) and (53,430.36) .. (53,434.5) .. controls (53,438.64) and (49.64,442) .. (45.5,442) .. controls (41.36,442) and (38,438.64) .. (38,434.5) -- cycle ;
            %Shape: Circle [id:dp7619734671020353] 
            \draw   (39,390.5) .. controls (39,386.36) and (42.36,383) .. (46.5,383) .. controls (50.64,383) and (54,386.36) .. (54,390.5) .. controls (54,394.64) and (50.64,398) .. (46.5,398) .. controls (42.36,398) and (39,394.64) .. (39,390.5) -- cycle ;
            %Shape: Circle [id:dp6294564272371498] 
            \draw   (165,368.5) .. controls (165,364.36) and (168.36,361) .. (172.5,361) .. controls (176.64,361) and (180,364.36) .. (180,368.5) .. controls (180,372.64) and (176.64,376) .. (172.5,376) .. controls (168.36,376) and (165,372.64) .. (165,368.5) -- cycle ;
            %Shape: Circle [id:dp35891654800339445] 
            \draw   (435,367.5) .. controls (435,363.36) and (438.36,360) .. (442.5,360) .. controls (446.64,360) and (450,363.36) .. (450,367.5) .. controls (450,371.64) and (446.64,375) .. (442.5,375) .. controls (438.36,375) and (435,371.64) .. (435,367.5) -- cycle ;
            %Shape: Circle [id:dp42955167356243795] 
            \draw   (106,368.5) .. controls (106,364.36) and (109.36,361) .. (113.5,361) .. controls (117.64,361) and (121,364.36) .. (121,368.5) .. controls (121,372.64) and (117.64,376) .. (113.5,376) .. controls (109.36,376) and (106,372.64) .. (106,368.5) -- cycle ;
            %Shape: Circle [id:dp08246628542722167] 
            \draw  [fill={rgb, 255:red, 0; green, 0; blue, 0 }  ,fill opacity=1 ] (239,369.5) .. controls (239,365.36) and (242.36,362) .. (246.5,362) .. controls (250.64,362) and (254,365.36) .. (254,369.5) .. controls (254,373.64) and (250.64,377) .. (246.5,377) .. controls (242.36,377) and (239,373.64) .. (239,369.5) -- cycle ;
            %Shape: Circle [id:dp08588533713045754] 
            \draw  [fill={rgb, 255:red, 0; green, 0; blue, 0 }  ,fill opacity=1 ] (338,369) .. controls (338,364.86) and (341.36,361.5) .. (345.5,361.5) .. controls (349.64,361.5) and (353,364.86) .. (353,369) .. controls (353,373.14) and (349.64,376.5) .. (345.5,376.5) .. controls (341.36,376.5) and (338,373.14) .. (338,369) -- cycle ;
            %Curve Lines [id:da5010973814205286] 
            \draw    (53,330.5) .. controls (178,304) and (344,316) .. (338,369) ;
            %Curve Lines [id:da4497346759798895] 
            \draw    (54,390.5) .. controls (112,411) and (212,415) .. (239,369.5) ;
            %Curve Lines [id:da2204739801066392] 
            \draw    (53,434.5) .. controls (113,449) and (348,453) .. (338,369) ;
            %Straight Lines [id:da6964977947586423] 
            \draw    (254,369.5) -- (338,369) ;
            %Straight Lines [id:da2608209592148879] 
            \draw    (180,368.5) -- (239,369.5) ;
            %Curve Lines [id:da9647609025458779] 
            \draw    (121,368.5) .. controls (134,345) and (206,329) .. (239,369.5) ;
            %Straight Lines [id:da8673177260050793] 
            \draw    (353,369) -- (435,367.5) ;
            %Straight Lines [id:da7864502867472004] 
            \draw  [dash pattern={on 4.5pt off 4.5pt}]  (567,302) -- (568,438) ;
            %Straight Lines [id:da7929998579030518] 
            \draw  [dash pattern={on 4.5pt off 4.5pt}]  (885,303) -- (887,422) ;
            %Shape: Circle [id:dp36321322558305325] 
            \draw   (523,324.5) .. controls (523,320.36) and (526.36,317) .. (530.5,317) .. controls (534.64,317) and (538,320.36) .. (538,324.5) .. controls (538,328.64) and (534.64,332) .. (530.5,332) .. controls (526.36,332) and (523,328.64) .. (523,324.5) -- cycle ;
            %Shape: Circle [id:dp5647329126579644] 
            \draw   (523,428.5) .. controls (523,424.36) and (526.36,421) .. (530.5,421) .. controls (534.64,421) and (538,424.36) .. (538,428.5) .. controls (538,432.64) and (534.64,436) .. (530.5,436) .. controls (526.36,436) and (523,432.64) .. (523,428.5) -- cycle ;
            %Shape: Circle [id:dp7779051118076318] 
            \draw   (524,384.5) .. controls (524,380.36) and (527.36,377) .. (531.5,377) .. controls (535.64,377) and (539,380.36) .. (539,384.5) .. controls (539,388.64) and (535.64,392) .. (531.5,392) .. controls (527.36,392) and (524,388.64) .. (524,384.5) -- cycle ;
            %Shape: Circle [id:dp3172643215194454] 
            \draw  [fill={rgb, 255:red, 0; green, 0; blue, 0 }  ,fill opacity=1 ] (650,362.5) .. controls (650,358.36) and (653.36,355) .. (657.5,355) .. controls (661.64,355) and (665,358.36) .. (665,362.5) .. controls (665,366.64) and (661.64,370) .. (657.5,370) .. controls (653.36,370) and (650,366.64) .. (650,362.5) -- cycle ;
            %Shape: Circle [id:dp2578049711359872] 
            \draw   (920,361.5) .. controls (920,357.36) and (923.36,354) .. (927.5,354) .. controls (931.64,354) and (935,357.36) .. (935,361.5) .. controls (935,365.64) and (931.64,369) .. (927.5,369) .. controls (923.36,369) and (920,365.64) .. (920,361.5) -- cycle ;
            %Shape: Circle [id:dp3965076754215262] 
            \draw   (591,362.5) .. controls (591,358.36) and (594.36,355) .. (598.5,355) .. controls (602.64,355) and (606,358.36) .. (606,362.5) .. controls (606,366.64) and (602.64,370) .. (598.5,370) .. controls (594.36,370) and (591,366.64) .. (591,362.5) -- cycle ;
            %Shape: Circle [id:dp3342124754288385] 
            \draw  [fill={rgb, 255:red, 255; green, 255; blue, 255 }  ,fill opacity=1 ] (724,363.5) .. controls (724,359.36) and (727.36,356) .. (731.5,356) .. controls (735.64,356) and (739,359.36) .. (739,363.5) .. controls (739,367.64) and (735.64,371) .. (731.5,371) .. controls (727.36,371) and (724,367.64) .. (724,363.5) -- cycle ;
            %Shape: Circle [id:dp22448540174909248] 
            \draw  [fill={rgb, 255:red, 0; green, 0; blue, 0 }  ,fill opacity=1 ] (823,363) .. controls (823,358.86) and (826.36,355.5) .. (830.5,355.5) .. controls (834.64,355.5) and (838,358.86) .. (838,363) .. controls (838,367.14) and (834.64,370.5) .. (830.5,370.5) .. controls (826.36,370.5) and (823,367.14) .. (823,363) -- cycle ;
            %Curve Lines [id:da5744678973741734] 
            \draw    (538,324.5) .. controls (663,298) and (829,310) .. (823,363) ;
            %Curve Lines [id:da06495913619948757] 
            \draw    (539,384.5) .. controls (597,405) and (631,382) .. (650,362.5) ;
            %Curve Lines [id:da10554428059146226] 
            \draw    (538,428.5) .. controls (598,443) and (833,447) .. (823,363) ;
            %Straight Lines [id:da11627782433377643] 
            \draw    (739,363.5) -- (823,363) ;
            %Straight Lines [id:da6507638247438139] 
            \draw    (606,362.5) -- (650,362.5) ;
            %Curve Lines [id:da6543181076943854] 
            \draw    (665,362.5) .. controls (696,333) and (764,323) .. (823,363) ;
            %Straight Lines [id:da8315851492731273] 
            \draw    (838,363) -- (920,361.5) ;

            % Text Node
            \draw (168,324) node [anchor=north west][inner sep=0.75pt]    {$3$};
            % Text Node
            \draw (194,365) node [anchor=north west][inner sep=0.75pt]    {$3$};
            % Text Node
            \draw (255,343) node [anchor=north west][inner sep=0.75pt]    {$-5-c_{2}$};
            % Text Node
            \draw (136,405) node [anchor=north west][inner sep=0.75pt]    {$4$};
            % Text Node
            \draw (186,445) node [anchor=north west][inner sep=0.75pt]    {$4$};
            % Text Node
            \draw (40,324) node [anchor=north west][inner sep=0.75pt]    {$\textcolor[rgb]{0.82,0.01,0.01}{\text{x}}$};
            % Text Node
            \draw (40,382) node [anchor=north west][inner sep=0.75pt]    {$\textcolor[rgb]{0.82,0.01,0.01}{\text{x}}$};
            % Text Node
            \draw (39,428) node [anchor=north west][inner sep=0.75pt]    {$\textcolor[rgb]{0.82,0.01,0.01}{\text{x}}$};
            % Text Node
            \draw (139,344) node [anchor=north west][inner sep=0.75pt]    {$\textcolor[rgb]{0.82,0.01,0.01}{\text{x}}$};
            % Text Node
            \draw (204,362) node [anchor=north west][inner sep=0.75pt]    {$\textcolor[rgb]{0.82,0.01,0.01}{\text{x}}$};
            % Text Node
            \draw (240,362) node [anchor=north west][inner sep=0.75pt]    {$\textcolor[rgb]{0.82,0.01,0.01}{\text{x}}$};
            % Text Node
            \draw (289,362) node [anchor=north west][inner sep=0.75pt]    {$\textcolor[rgb]{0.82,0.01,0.01}{\text{x}}$};
            % Text Node
            \draw (339,362) node [anchor=north west][inner sep=0.75pt]    {$\textcolor[rgb]{0.82,0.01,0.01}{\text{x}}$};
            % Text Node
            \draw (436,362) node [anchor=north west][inner sep=0.75pt]    {$\textcolor[rgb]{0.82,0.01,0.01}{\text{x}}$};
            % Text Node
            \draw (38,297) node [anchor=north west][inner sep=0.75pt]    {$\textcolor[rgb]{0.82,0.01,0.01}{1}$};
            % Text Node
            \draw (40,362) node [anchor=north west][inner sep=0.75pt]    {$\textcolor[rgb]{0.82,0.01,0.01}{2}$};
            % Text Node
            \draw (36,445) node [anchor=north west][inner sep=0.75pt]    {$\textcolor[rgb]{0.82,0.01,0.01}{3}$};
            % Text Node
            \draw (130,328) node [anchor=north west][inner sep=0.75pt]    {$\textcolor[rgb]{0.82,0.01,0.01}{4}$};
            % Text Node
            \draw (194,348) node [anchor=north west][inner sep=0.75pt]  [font=\footnotesize]  {$\textcolor[rgb]{0.82,0.01,0.01}{5}$};
            % Text Node
            \draw (239,377) node [anchor=north west][inner sep=0.75pt]    {$\textcolor[rgb]{0.82,0.01,0.01}{6}$};
            % Text Node
            \draw (290,373) node [anchor=north west][inner sep=0.75pt]    {$\textcolor[rgb]{0.82,0.01,0.01}{7}$};
            % Text Node
            \draw (345.5,376.5) node [anchor=north west][inner sep=0.75pt]    {$\textcolor[rgb]{0.82,0.01,0.01}{8}$};
            % Text Node
            \draw (442.5,375) node [anchor=north west][inner sep=0.75pt]    {$\textcolor[rgb]{0.82,0.01,0.01}{9}$};
            % Text Node
            \draw (609,363) node [anchor=north west][inner sep=0.75pt]    {$3$};
            % Text Node
            \draw (752,363) node [anchor=north west][inner sep=0.75pt]    {$3$};
            % Text Node
            \draw (702,315) node [anchor=north west][inner sep=0.75pt]    {$-8-c_{2}$};
            % Text Node
            \draw (616,388) node [anchor=north west][inner sep=0.75pt]    {$4$};
            % Text Node
            \draw (671,439) node [anchor=north west][inner sep=0.75pt]    {$4$};
            % Text Node
            \draw (525,317) node [anchor=north west][inner sep=0.75pt]    {$\textcolor[rgb]{0.82,0.01,0.01}{\text{x}}$};
            % Text Node
            \draw (525,377) node [anchor=north west][inner sep=0.75pt]    {$\textcolor[rgb]{0.82,0.01,0.01}{\text{x}}$};
            % Text Node
            \draw (524,422) node [anchor=north west][inner sep=0.75pt]    {$\textcolor[rgb]{0.82,0.01,0.01}{\text{x}}$};
            % Text Node
            \draw (620,356) node [anchor=north west][inner sep=0.75pt]    {$\textcolor[rgb]{0.82,0.01,0.01}{\text{x}}$};
            % Text Node
            \draw (651,356) node [anchor=north west][inner sep=0.75pt]    {$\textcolor[rgb]{0.82,0.01,0.01}{\text{x}}$};
            % Text Node
            \draw (704,332) node [anchor=north west][inner sep=0.75pt]    {$\textcolor[rgb]{0.82,0.01,0.01}{\text{x}}$};
            % Text Node
            \draw (774,356) node [anchor=north west][inner sep=0.75pt]    {$\textcolor[rgb]{0.82,0.01,0.01}{\text{x}}$};
            % Text Node
            \draw (824,356) node [anchor=north west][inner sep=0.75pt]    {$\textcolor[rgb]{0.82,0.01,0.01}{\text{x}}$};
            % Text Node
            \draw (921,356) node [anchor=north west][inner sep=0.75pt]    {$\textcolor[rgb]{0.82,0.01,0.01}{\text{x}}$};
            % Text Node
            \draw (523,291) node [anchor=north west][inner sep=0.75pt]    {$\textcolor[rgb]{0.82,0.01,0.01}{1}$};
            % Text Node
            \draw (525,356) node [anchor=north west][inner sep=0.75pt]    {$\textcolor[rgb]{0.82,0.01,0.01}{2}$};
            % Text Node
            \draw (521,439) node [anchor=north west][inner sep=0.75pt]    {$\textcolor[rgb]{0.82,0.01,0.01}{3}$};
            % Text Node
            \draw (620,335) node [anchor=north west][inner sep=0.75pt]    {$\textcolor[rgb]{0.82,0.01,0.01}{4}$};
            % Text Node
            \draw (657.5,370) node [anchor=north west][inner sep=0.75pt]  [font=\footnotesize]  {$\textcolor[rgb]{0.82,0.01,0.01}{5}$};
            % Text Node
            \draw (702,344) node [anchor=north west][inner sep=0.75pt]    {$\textcolor[rgb]{0.82,0.01,0.01}{6}$};
            % Text Node
            \draw (775,367) node [anchor=north west][inner sep=0.75pt]    {$\textcolor[rgb]{0.82,0.01,0.01}{7}$};
            % Text Node
            \draw (830.5,370.5) node [anchor=north west][inner sep=0.75pt]    {$\textcolor[rgb]{0.82,0.01,0.01}{8}$};
            % Text Node
            \draw (927.5,369) node [anchor=north west][inner sep=0.75pt]    {$\textcolor[rgb]{0.82,0.01,0.01}{9}$};
            % Text Node
            \draw (128,476) node [anchor=north west][inner sep=0.75pt]    {$\mathcal{D}_{1} ,\ \xi $};
            % Text Node
            \draw (658,475) node [anchor=north west][inner sep=0.75pt]    {$\mathcal{D}_{2} ,\ \xi '$};

        \end{tikzpicture}

        \caption{}
        \label{fig:equivalence_relation}
    \end{figure}

    Both $\mathcal D_1$ and $\mathcal D_2$ are $1$-real floor diagrams having same type, genus and multidegree with non-zero multiplicity. Therefore, they contribute to the same number, although their imaginary parts differ. Indeed $\mathcal I(\mathcal D_1,m_1,1)=\{m_1(2),m_1(3),m_1(4),m_1(5)\}$, while $\mathcal I(\mathcal D_2,m_2,1)=\{m_2(2),m_2(3)\}$. Let $\tilde\varepsilon=(0,0,-1)$, one can check that $\tilde\beta'=\tilde\beta-2\tilde\varepsilon$ and $\tilde\delta'=\tilde\delta+\tilde\varepsilon$. This operation corresponds to removing an $s$-pair from the imaginary part.
\end{example}
\begin{remark}
    \cref{example-s-real-multiplicity} shows that two divergence sequences in the same equivalence class cannot be attached to the same $s$-real floor diagram.
\end{remark}
We encode the combinatorial Welschinger-type numbers in a function $G_{(\textbf{d}^r;\textbf{d}^l),\textbf{c},g,r}^{n_1,n_2}(\textbf{x},\textbf{y},\textbf{z},\textbf{w})$, that depends on four vectors of variables, defined over a lattice. The vector $\textbf{z}$ traces the white vertices in the imaginary part of the floor diagrams that belong to the left and right block, while the vector $\textbf{w}$ traces the white vertices in the imaginary part of the floor diagrams that belong to the central block. Then, the equivalence relation on the vectors of sequences induces an equivalence relation on the vectors $(\textbf{x},\textbf{y},\textbf{z},\textbf{w})$. Therefore, we have a well-defined map on the quotient lattice that gets rid of the vector $\textbf{w}$. Finally, we use results from \cref{subsec:Ehrhart} to prove the piecewise quasipolynomiality.

Let us denote by $\mathcal C$ the set of sequences $\alpha=(\alpha_1,\alpha_2,\dots)$ such that $\alpha_i\in\mathbb Z$ and $\alpha_i\neq0$ for only finitely many $i$'s. We write $|\alpha|=\sum\alpha_i$ and define the operation $\alpha+\beta=(\alpha_1+\beta_1,\alpha_2+\beta_2,\dots)$. Let $\mathcal C^+$ be the subset of $\mathcal C$ of sequences $\alpha$ such that $\alpha_i\geq0$ for all $i$. We define a relation on $(\mathcal C^+)^8$ in the following way: let $\xi=(\alpha,\beta,\gamma,\delta,\tilde\alpha,\tilde\beta,\tilde\gamma,\tilde\delta)$ and $\xi'=(\alpha',\beta',\gamma',\delta',\tilde\alpha',\tilde\beta',\tilde\gamma',\tilde\delta')$ in $(\mathcal C^+)^8$, we say $\xi\sim\xi'$ if and only if $\alpha'=\alpha$, $\gamma'=\gamma$, $\tilde\alpha'=\tilde\alpha$, $\tilde\gamma'=\tilde\gamma$ and
\begin{align}
    &\beta'=\beta-2\varepsilon\qquad\tilde\beta'=\tilde\beta-2\tilde\varepsilon\\
    &\delta'=\delta+\varepsilon\qquad\>\>\>\>\tilde\delta'=\tilde\delta+\tilde\varepsilon
\end{align}
for some $\varepsilon,\tilde\varepsilon\in\mathcal C$ such that
\begin{align}
    \begin{cases}
        \text{if }\beta_j=\delta_j=0\implies\varepsilon_j=0\\
        \varepsilon_j=i\leq0\implies0\leq-i\leq\delta_j\\
        \varepsilon_j=i\geq0\implies0\leq i\leq\frac{\beta_j}{2}
    \end{cases}
    \qquad
    \begin{cases}
        \text{if }\tilde\beta_j=\tilde\delta_j=0\implies\tilde\varepsilon_j=0\\
        \tilde\varepsilon_j=i\leq0\implies0\leq-i\leq\tilde\delta_j\\
        \tilde\varepsilon_j=i\geq0\implies0\leq i\leq\frac{\tilde\beta_j}{2}
    \end{cases}
\end{align}
\begin{proposition}
    The relation $\sim$ is an equivalence relation on $(\mathcal C^+)^8$.
\end{proposition}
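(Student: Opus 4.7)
The plan is to verify the three axioms of an equivalence relation in turn, handling the $\beta,\delta,\varepsilon$ part and the $\tilde\beta,\tilde\delta,\tilde\varepsilon$ part completely symmetrically, so I can restrict attention to the first.

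For reflexivity, I would just take $\varepsilon=\tilde\varepsilon=0$; the three constraints hold vacuously and $(\beta,\delta)$ is unchanged. For symmetry, if $\xi\sim\xi'$ is witnessed by $(\varepsilon,\tilde\varepsilon)$, I would show that $\xi'\sim\xi$ is witnessed by $(-\varepsilon,-\tilde\varepsilon)$. The identities $\beta=\beta'-2(-\varepsilon)$ and $\delta=\delta'+(-\varepsilon)$ are immediate; the content is in the constraints. For each index $j$ I would split according to the sign of $\varepsilon_j$: if $\varepsilon_j\ge 0$ then $-\varepsilon_j\le 0$, so I must verify $0\le\varepsilon_j\le\delta'_j=\delta_j+\varepsilon_j$, which reduces to $\delta_j\ge 0$; the dual case is analogous. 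The degenerate case $\beta'_j=\delta'_j=0$ forces $\beta_j=-2\delta_j$, hence $\beta_j=\delta_j=0$ and $\varepsilon_j=0$ by the original constraints, so $-\varepsilon_j=0$.

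Transitivity is the main step. Given $\xi\sim\xi'$ via $(\varepsilon_1,\tilde\varepsilon_1)$ and $\xi'\sim\xi''$ via $(\varepsilon_2,\tilde\varepsilon_2)$, I would set $\varepsilon=\varepsilon_1+\varepsilon_2$ and $\tilde\varepsilon=\tilde\varepsilon_1+\tilde\varepsilon_2$. The formulas $\beta''=\beta-2\varepsilon$ and $\delta''=\delta+\varepsilon$ follow by chaining. The real work is checking the constraints on $\varepsilon$ componentwise, which I would do by a four-way case analysis on the signs of $\varepsilon_{1,j}$ and $\varepsilon_{2,j}$, using at each step the bounds
\[
0\le -\varepsilon_{1,j}\le\delta_j,\quad 0\le\varepsilon_{1,j}\le\tfrac{\beta_j}{2},\quad 0\le -\varepsilon_{2,j}\le\delta'_j=\delta_j+\varepsilon_{1,j},\quad 0\le\varepsilon_{2,j}\le\tfrac{\beta'_j}{2}=\tfrac{\beta_j}{2}-\varepsilon_{1,j}.
\]
For instance, when $\varepsilon_j\le 0$ with $\varepsilon_{1,j}\ge 0$ and $\varepsilon_{2,j}\le 0$, the bound $-\varepsilon_j=-\varepsilon_{1,j}-\varepsilon_{2,j}\le\delta_j$ follows directly from $-\varepsilon_{2,j}\le\delta_j+\varepsilon_{1,j}$; the other sign patterns are handled by the same kind of one-line telescoping. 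Finally, the degenerate case $\beta_j=\delta_j=0$ propagates through $\xi'$: it forces $\varepsilon_{1,j}=0$, hence $\beta'_j=\delta'_j=0$, hence $\varepsilon_{2,j}=0$, so $\varepsilon_j=0$.

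The only subtlety I anticipate is bookkeeping in the transitivity step, since the upper bound on $\varepsilon_{2,j}$ depends on $\varepsilon_{1,j}$ through $\beta'_j$ and $\delta'_j$; this is what makes the bounds telescope cleanly, but one has to be careful to use the intermediate quantities $\beta'_j,\delta'_j$ rather than $\beta_j,\delta_j$ when invoking the constraints for the second step.
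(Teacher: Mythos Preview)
Your proof plan is correct. The paper states this proposition without proof, so there is nothing to compare against directly; your direct verification of reflexivity, symmetry, and transitivity via explicit witnesses $\varepsilon=0$, $-\varepsilon$, and $\varepsilon_1+\varepsilon_2$ works, and the sign-based case analysis for transitivity telescopes exactly as you describe.

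One shortcut you could take instead: for $\xi,\xi'\in(\mathcal C^+)^8$, observe that once $\beta'=\beta-2\varepsilon$ and $\delta'=\delta+\varepsilon$ are required to lie in $\mathcal C^+$, the three listed constraints on $\varepsilon$ are automatic (the case $\varepsilon_j\ge 0$ gives $\varepsilon_j\le\beta_j/2$ from $\beta'_j\ge 0$, the case $\varepsilon_j\le 0$ gives $-\varepsilon_j\le\delta_j$ from $\delta'_j\ge 0$, and $\beta_j=\delta_j=0$ forces $\beta'_j+2\delta'_j=0$ hence $\varepsilon_j=0$). Thus $\xi\sim\xi'$ is equivalent to $\alpha=\alpha'$, $\gamma=\gamma'$, $\tilde\alpha=\tilde\alpha'$, $\tilde\gamma=\tilde\gamma'$, together with $\beta+2\delta=\beta'+2\delta'$ and $\tilde\beta+2\tilde\delta=\tilde\beta'+2\tilde\delta'$ componentwise. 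In this formulation $\sim$ is the pullback of equality under the map $\xi\mapsto(\alpha,\gamma,\tilde\alpha,\tilde\gamma,\beta+2\delta,\tilde\beta+2\tilde\delta)$, and the equivalence-relation axioms are immediate with no case analysis. Your approach is more hands-on but arrives at the same conclusion.
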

In the notation of \cref{sec:real_diagrams}, let $s\geq0$ such that $n_2+2a+g-1-2s\geq0$. Let $\textbf{d}=(d^t;\textbf{d}^r;\textbf{d}^l)$ be a vector with positive integer entries and $\textbf{c}=(\textbf{c}^r;\textbf{c}^l)\in\mathbb Z^{n+m}$ such that $c_1^r>\dots>c_n^r$ and $c_1^l<\dots<c_m^l$. Let $\xi=(\alpha,\beta,\gamma,\delta,\tilde\alpha,\tilde\beta,\tilde\gamma,\tilde\delta)\in(\mathcal C^+)^8$ such that
\begin{align}
    \sum_i i[\alpha_i+\beta_i+2(\gamma_i+\delta_i)]=d^t+\sum_{i=1}^nc_i^rd_i^r-\sum_{j=1}^mc_j^ld_j^l,\qquad\sum_i i[\tilde\alpha_i+\tilde\beta_i+2(\tilde\gamma_i+\tilde\delta_i)]=d^t.
\end{align}
Denote by $C^\xi_{\textbf{c},g}(\textbf{d},s)$ the following quantity
\begin{equation}
    \sum\mu_s(\mathcal{D})
\end{equation}
where the sum is taken over all floor diagrams of multidegree \textbf{d} and genus $g$ for $P(\textbf{c},\textbf{d})$ and divergence multiplicity vector $\xi'\in(\mathcal{C}^+)^8$ such that $\xi\sim\xi'$.
\begin{remark}\label{rmk-representative}
    Let $\xi=(\alpha,\beta,\gamma,0,\tilde\alpha,\tilde\beta,\tilde\gamma,0)$ and $\xi'=(\alpha,\beta',\gamma,\delta,\tilde\alpha,\tilde\beta',\tilde\gamma,\tilde\delta)$ in $(\mathcal C^+)^8$ such that $\xi\sim\xi'$, then $C^\xi_{\textbf{c},g}(\textbf{d},s)=C^{\xi'}_{\textbf{c},g}(\textbf{d},s)$.
\end{remark}
The next step is to encode these numbers in a map of the form $G_{(\textbf{d}^r;\textbf{d}^l),\textbf{c},g,r}^{n_1,n_2}(\textbf{x},\textbf{y},\textbf{z},\textbf{w})$ defined over the lattice
\begin{equation}
    \tilde\Lambda=\biggl\{(\textbf{x},\textbf{y},\textbf{z},\textbf{w})\in\mathbb Z^{m_1+n_2+m_3+\lfloor\frac{n_2}{2}\rfloor}\bigg|\sum_{i=1}^{m_1}x_i+\sum_{k=1}^{m_3}2z_i+\sum_{j=1}^{n_2}y_j+\sum_{t=1}^{\lfloor\frac{n_2}{2}\rfloor}2w_t+\sum_{i=1}^nc_i^rd_i^r-\sum_{j=1}^mc_j^ld_j^l=0\biggr\},
\end{equation}
where the total number of entries of the vectors $\textbf{x},\textbf{z}$ is equal to $n_1$, i.e. $m_1+2m_3=n_1$, and the total number of nonzero entries of the vectors $\textbf{y},\textbf{w}$ is equal to $n_2$. We associate a vector of sequences $(\alpha,\beta,\gamma,\delta,\tilde\alpha,\tilde\beta,\tilde\gamma,\tilde\delta)$ to $(\textbf{x},\textbf{y},\textbf{z},\textbf{w})\in\tilde\Lambda$ in the following way
\begin{multicols}{2}
    \begin{itemize}
        \item $\alpha_i$ is the number of entries $x_j=-i$.
        \item $\beta_i$ is the number of entries $y_j=-i$.
        \item $\tilde\alpha_i$ is the number of entries $x_j=i$.
        \item $\tilde\beta_i$ is the number of entries $y_j=i$.
        \item $\gamma_i$ is the number of entries $z_j=-i$.
        \item $\delta_i$ is the number of entries $w_j=-i$.
        \item $\tilde\gamma_i$ is the number of entries $z_j=i$.
        \item $\tilde\delta_i$ is the number of entries $w_j=i$.
    \end{itemize}
\end{multicols}
Note that $\xi=(\alpha,\beta,\gamma,\delta,\tilde\alpha,\tilde\beta,\tilde\gamma,\tilde\delta)$ and $(\textbf{x},\textbf{y},\textbf{z},\textbf{w})$ determine each other up to permutations of the entries of the latter, in this case we write $\xi\vdash(\textbf{x},\textbf{y},\textbf{z},\textbf{w})$. Moreover the equivalence relation defined on $(\mathcal C^+)^8$ induces an equivalence relation on the lattice: let $\xi,\xi'\in(\mathcal C^+)^8$ such that $\xi\vdash(\textbf{x}_\xi,\textbf{y}_\xi,\textbf{z}_\xi,\textbf{w}_\xi)$ and $\xi'\vdash(\textbf{x}_{\xi'},\textbf{y}_{\xi'},\textbf{z}_{\xi'},\textbf{w}_{\xi'})$, then $(\textbf{x}_\xi,\textbf{y}_\xi,\textbf{z}_\xi,\textbf{w}_\xi)\sim_{\tilde\Lambda}(\textbf{x}_{\xi'},\textbf{y}_{\xi'},\textbf{z}_{\xi'},\textbf{w}_{\xi'})$ if and only if $\xi\sim\xi'$. By \cref{rmk-representative}, we have that any $(\textbf{x}_{\xi'},\textbf{y}_{\xi'},\textbf{z}_{\xi'},\textbf{w}_{\xi'})$ is equivalent to $(\textbf{x}_\xi,\textbf{y}_\xi,\textbf{z}_\xi,0)$ for some $\xi=(\alpha,\beta,\gamma,0,\tilde\alpha,\tilde\beta,\tilde\gamma,0)$ and $\xi'=(\alpha,\beta',\gamma,\delta',\tilde\alpha,\tilde\beta',\tilde\gamma,\tilde\delta')$ such that $\xi\sim\xi'$. In particular, if $\xi=(\alpha,\beta,\gamma,\delta,\tilde\alpha,\tilde\beta,\tilde\gamma,\tilde\delta)$ is a divergence multiplicity vector, then
\begin{equation}
    G_{(\textbf{d}^r;\textbf{d}^l),\textbf{c},g,s}^{m_1,n_2,m_3}(\textbf{x},\textbf{y},\textbf{z},\textbf{w})=C^\xi_{\textbf{c},g}(\textbf{d},s),
\end{equation}
where $\xi\vdash(\textbf{x},\textbf{y},\textbf{z},\textbf{w})$.
\begin{remark}
    The map $G_{(\textbf{d}^r;\textbf{d}^l),\textbf{c},g,s}^{m_1,n_2,m_3}(\textbf{x},\textbf{y},\textbf{z},\textbf{w})$ does not depend on the class of $(\textbf{x},\textbf{y},\textbf{z},\textbf{w})$ with respect to the equivalence relation $\sim_{\tilde\Lambda}$. In particular,
    \begin{equation}
        G_{(\textbf{d}^r;\textbf{d}^l),\textbf{c},g,s}^{m_1,n_2,m_3}(\textbf{x},\textbf{y},\textbf{z},\textbf{w})=G_{(\textbf{d}^r;\textbf{d}^l),\textbf{c},g,s}^{m_1,n_2,m_3}(\textbf{x},\textbf{y}',\textbf{z},0)\qquad (\textbf{x},\textbf{y},\textbf{z},\textbf{w})\sim_{\tilde\Lambda}(\textbf{x},\textbf{y}',\textbf{z},0).
    \end{equation}
    This means that the map naturally descends on the quotient
    \begin{equation}
        \tilde\Lambda/\sim_{\tilde\Lambda}=\biggl\{(\textbf{x},\textbf{y},\textbf{z})\in\mathbb Z^{m_1+m_3+n_2}\bigg|\sum_{i=1}^{m_1}x_i+\sum_{k=1}^{m_3}2z_i+\sum_{j=1}^{n_2}y_j+\sum_{i=1}^nc_i^rd_i^r-\sum_{j=1}^mc_j^ld_j^l=0\biggr\}.
    \end{equation}
\end{remark}
Consider the following hyperplane arrangement in $\tilde\Lambda$
\begin{equation}
    \sum_{i\in S}x_i+2\sum_{j\in\tilde S}z_j+\sum_{k\in T}y_k+2\sum_{t\in\tilde T}w_t+\sum_{i=1}^nq_ic_i^r-\sum_{j=1}^mp_jc_j^l=0
\end{equation}
\begin{equation}
    y_i-y_j=0\qquad1\leq i<j\leq n_2
\end{equation}
\begin{equation}
    w_i-w_j=0\qquad1\leq i<j\leq\bigg\lfloor\frac{n_2}{2}\bigg\rfloor
\end{equation}
where $S\subseteq[m_1]$, $\tilde S\subseteq[m_3]$, $T\subseteq[n_2]$, $\tilde T\subseteq[\lfloor\frac{n_2}{2}\rfloor]$, $0\leq q_i\leq d_i^r$ and $0\leq p_j\leq d_j^l$ for all $i\in[n]$ and $j\in[m]$. Call this hyperplane arrangement $\mathcal K^{m_1,n_2,m_3}(\textbf{c})$. The image of $\mathcal K^{m_1,n_2,m_3}(\textbf{c})$ in the quotient $\tilde\Lambda/\sim_{\tilde\Lambda}$ gets rid of the variable vector \textbf{w}. We denote the hyperplane arrangement in the quotient as $\mathcal K^{m_1,n_2,m_3}(\textbf{c})/\sim_{\tilde\Lambda}$.\\
Fix $g,n_1,n_2\in\mathbb Z_{\geq0}$, $\textbf{c}=(\textbf{c}^r;\textbf{c}^l)\in\mathbb Z^{n+m}$ such that $c_1^r>\dots>c_n^r$ and $c_1^l<\dots<c_m^l$, $(\textbf{d}^r;\textbf{d}^l)\in(\mathbb Z_{>0})^{n+m}$ and $a=\sum d_i^r=\sum d_j^l$. Let $(\mathcal D,m)$ be an $s$-real floor diagram and denote by $(\tilde{\mathcal D},m)$ the graph obtained from $\mathcal D$ by removing all the weights, but such that the underlying graph $\tilde{\mathcal D}$ inherit the partition $V=L\cup C\cup R$ of the vertices, the ordering of $L,C,R$ and the coloring of the vertices. We call $\mathcal G$ the collection of such pairs $(H,m)$ that contribute to $G_{(\textbf{d}^r;\textbf{d}^l),\textbf{c},g,s}^{m_1,n_2,m_3}(\textbf{x},\textbf{y},\textbf{z},\textbf{w})$. $\mathcal G$ is finite and depends only on $g,a$ and $n_1+n_2$. Let $(H,m)\in\mathcal G$ and define the set $W_{(H,m),\textbf{c},r-l,s}(\textbf{x},\textbf{y},\textbf{z},\textbf{w})$ of weights $w:E(H)\to\mathbb N$ for which the resulting weighted graph is an $s$-real floor diagram for $P(\textbf{c},\textbf{d})$. By construction, the obtained floor diagram has genus $g$ and multidegree $\textbf{d}$. Let us define the following set
\begin{equation}
    \mathsf P_{\mathcal I(H,m,s)}=\left\{w\in\mathbb R^{E(H)}\colon
    \begin{aligned}
        &\bullet w(e)>0\text{ for all } e\in E(H)\\
        &\bullet w(e)\equiv1\bmod 2\text{ for all }e\in E(H)\setminus\mathcal{I}(H,m,s)\\
        &\bullet w(e)=w(\tilde e) \text{ for all $s$-pairs } \{e,\tilde e\}\subseteq\mathcal I(H,m,s)\\
    \end{aligned} 
    \right\}
\end{equation}
and consider the quasipolynomial function $\pi_{\mathcal{I}(H,m,s)}:\mathbb R^{E(H)}\to\mathbb R$ defined by
\begin{equation}
    \pi_{\mathcal I(H,m,s)}(w)=
    \begin{cases}
        \displaystyle\prod w(e) &\text{if }w\in\mathsf P_{\mathcal I(H,m,s)}\\
        0 &\text{otherwise}
    \end{cases}
\end{equation}
where the product runs over all \textbf{internal edges} $e\not\in m\{|\textbf{x}^+|+|\textbf{z}^+|+2s+1,\dots,n\}$, with $|\textbf{x}^+|,|\textbf{z}^+|$ the number of positive entries in $\textbf{x},\textbf{z}$ and $n=n_1+n_2+2a+g-1$.
\begin{remark}\label{rmk-nonzero_contributions}
    Since every graph $H$ in $\mathcal G$ comes with a marking $m$, there exists only one vector $(\textbf{x},\textbf{y}',\textbf{z},\textbf{w}')\sim(\textbf{x},\textbf{y},\textbf{z},\textbf{w})$ such that $W_{(H,m),\textbf{c},r-l,s}(\textbf{x},\textbf{y}',\textbf{z},\textbf{w}')\neq\emptyset$. Indeed, the marking $m$ determines vertices and edges in $\mathcal I(H,m,s)$. The divergence of the vertices in the imaginary part of the central block must agree with $\textbf{w}'$.
\end{remark}
\begin{theorem}\label{thm-rpqp}
    Let $(\textbf{d}^r;\textbf{d}^l)\in\mathbb Z^{n+m}$ be a vector with positive integer coordinates and $g\geq0$, $n_1,n_2>0$ and $\textbf{c}=(\textbf{c}^r;\textbf{c}^l)\in\mathbb Z^{n+m}$ such that $c_1^r>\dots>c_n^r$ and $c_1^l<\dots<c_m^l$. The map $G_{(\textbf{d}^r;\textbf{d}^l),\textbf{c},g,s}^{m_1,n_2,m_3}(\textbf{x},\textbf{y},\textbf{z},\textbf{w})$ is piecewise quasipolynomial in each chamber of $\tilde\Lambda\setminus\mathcal K^{m_1,n_2,m_3}(\textbf{c})$.
\end{theorem}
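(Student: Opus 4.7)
The plan is to follow the same three-step strategy as in the proof of \cref{thm-quasipolynomiality}, but replacing the weight function $\pi_{E(G)}$ (tracking only parity of edge weights) with the more refined weight $\pi_{\mathcal I(G,m,s)}$ (a product of internal edge weights together with a parity constraint on the non-imaginary edges), and invoking \cref{cor-EhrhartY_qp} in place of \cref{cor-Ehrhart_quasipoly}.

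First I would write
\begin{equation*}
    G_{(\textbf{d}^r;\textbf{d}^l),\textbf{c},g,s}^{m_1,m_3,n_2}(\textbf{x},\textbf{y},\textbf{z},\textbf{w})
    =\frac{1}{\prod_i\beta_i!\tilde\beta_i!\gamma_i!\tilde\gamma_i!\delta_i!\tilde\delta_i!}\sum_{(G,m)\in\mathcal G}\sum_{(r,l)}\sum_{\sigma,\tau} G_{G,\textbf{c},r-l,s}(\textbf{x},\sigma(\textbf{y}),\textbf{z},\tau(\textbf{w}))
\end{equation*}
by breaking the enumeration of $s$-real floor diagrams into a choice of underlying decorated graph $(G,m)\in\mathcal G$, permutations $(r,l)\in Perm(D_r)\times Perm(D_l)$ of the prescribed right/left slopes, and reorderings $\sigma\in S_{n_2}$, $\tau\in S_{\lfloor n_2/2\rfloor}$ of the central white vertices in the real and imaginary parts. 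Here $G_{G,\textbf c,r-l,s}=\sum_{\mathbf w\in W_{G,\textbf c,r-l,s}}\pi_{\mathcal I(G,m,s)}(\mathbf w)$.

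Next, exactly as in \cref{subsec:proof}, I would interpret $W_{G,\textbf c,r-l,s}$ as the lattice points of the flow polytope $\Phi_G(\mathbf k)=\{\mathbf w\in\mathbb R^{E(G)}_{\geq 0}\mid A_G\mathbf w=\mathbf k\}$ where $\mathbf k\in\mathbb R^{V(G)}$ encodes the divergences $(\textbf{x},\textbf{y},\textbf{z},\textbf{w})$ at the white vertices and $r_i-l_i$ at the black vertices. The key observation is that $\pi_{\mathcal I(G,m,s)}$ has precisely the structure of the function $\pi_Y$ from \cref{cor-EhrhartY_qp}: the set $E$ of indices appearing in the product corresponds to the internal edges outside $m\{|\textbf{x}^+|+|\textbf{z}^+|+2s,\dots,n\}$, while the subset $Y\subseteq E$ for which no parity constraint is imposed corresponds exactly to the edges in $\mathcal I(G,m,s)$. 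Thus the weighted partition function $\mathcal P_{G,\pi_{\mathcal I(G,m,s)}}(\mathbf k)$ is piecewise quasipolynomial on $\{\mathbf k\in\mathbb R^{V(G)}\mid\sum\mathbf k(v)=0\}$ relative to the chambers of the discriminant arrangement $\{\sum_{v'\in V'}\mathbf k(v')=0\}_{V'\subseteq V(G)}$, by \cref{cor-EhrhartY_qp} applied to the adjacency matrix $A_G$ (whose columns lie in $A_{|E(G)|-1}$ and hence form a pointed configuration).

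Finally, restricting $\mathcal P_{G,\pi_{\mathcal I(G,m,s)}}$ to the affine subspace $H_{r-l}$ that fixes the black divergences and then symmetrising over $\sigma$, $\tau$ preserves piecewise quasipolynomiality (the $y_i-y_j=0$ and $w_i-w_j=0$ hyperplanes from \cref{eq-hyperplanes2} are added to accommodate the symmetrisation). Summing over all $(r,l)$ yields a map that is piecewise quasipolynomial relative to the common refinement $\bigcup_{(r,l)}S_{r-l,s}$, which is precisely $\mathcal K^{m_1,m_3,n_2}(\textbf c)$; summing over $\mathcal G$ gives the conclusion.

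The main obstacle I anticipate is verifying that the weight function $\pi_{\mathcal I(G,m,s)}$ genuinely fits the template of \cref{cor-EhrhartY_qp}, since the definition of $s$-real multiplicity also requires that edges in the imaginary part have \emph{even} weight (not merely unconstrained parity). This will force one to refine the parametric polytope $Q$ used in the proof of \cref{cor-EhrhartY_qp} so that coordinates indexed by $\mathcal I(G,m,s)$ satisfy $z_i\equiv 0\bmod 2$ rather than being unrestricted; the proof of \cref{cor-EhrhartY_qp} extends to this variant verbatim since it only adds further linear parity conditions, preserving the pointed configuration argument. Once this variant is in place, the three-step structure from \cref{subsec:proof} goes through without further modification.
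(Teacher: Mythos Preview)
Your overall strategy matches the paper's proof: decompose over graphs $(G,m)$, permutations $(r,l)$, and symmetrisations; interpret the inner sum as a weighted lattice-point count in a flow polytope; apply \cref{cor-EhrhartY_qp} in place of \cref{cor-Ehrhart_quasipoly}; and pass to the common refinement of discriminant arrangements. Two corrections are needed.

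First, your ``main obstacle'' rests on a misreading of \cref{def-real_multiplicity}. The clause ``with all edges of even weight in $\mathcal I(\mathcal D,m,s)$'' means that every edge \emph{whose weight is even} must lie in the imaginary part; it does \emph{not} say that edges in $\mathcal I$ are required to have even weight. Equivalently, the constraint is precisely that each edge in $E(G)\setminus\mathcal I(G,m,s)$ has odd weight, which is exactly what the paper's $\pi_{\mathcal I(G,m,s)}$ already encodes. Thus $\pi_{\mathcal I(G,m,s)}$ fits the template of $\pi_Y$ in \cref{cor-EhrhartY_qp} as written, with $Y$ the imaginary edges among the relevant internal edges; your proposed refinement (forcing $z_i\equiv 0\bmod 2$ on imaginary edges) is unnecessary and would in fact compute the wrong quantity.

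Second, your decomposition omits the sum over the equivalence class $\sim_{\tilde\Lambda}$. By definition $C^\xi_{\textbf c,g}(\textbf d,s)$ sums over floor diagrams with divergence multiplicity vector $\xi'$ for \emph{every} $\xi'\sim\xi$, not just the chosen $\xi$. The paper therefore inserts an additional outer sum over representatives $(\textbf x,\textbf y',\textbf z,\textbf w')\sim_{\tilde\Lambda}(\textbf x,\textbf y,\textbf z,\textbf w)$ into $G_{G,\textbf c,r-l,s}$, and the final arrangement is the common refinement $\bigcup_{(r,l)}\bigcup_{(\textbf x,\textbf y',\textbf z,\textbf w')}S_{r-l}(\textbf x,\textbf y',\textbf z,\textbf w')$. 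You should also pull out the sign $(-1)^{|BV(G)\cap\mathcal I(G,m,s)|/2}$ as a constant depending only on $(G,m)$, and record that the affine slice $H_{r-l}$ now identifies the paired white vertices via $\textbf k(\tilde w_h)=\textbf k(\tilde w_{h+1})=z_i$ (and similarly for the $\textbf w$-coordinates). With these adjustments your argument coincides with the paper's.
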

\begin{proof}[\textit{Proof. }]
    Define the map
    \begin{equation}\label{eq-map1}
        G_{(H,m),\textbf{c},r-l,s}(\textbf{x},\textbf{y},\textbf{z},\textbf{w})=\sum\sum\prod_{i=1}^{m_3} z_i\pi_{\mathcal{I}(H,m,s)}(w),
    \end{equation}
    where the first sum runs over all the vectors $(\textbf{x},\textbf{y}',\textbf{z},\textbf{w}')\sim_{\tilde\Lambda}(\textbf{x},\textbf{y},\textbf{z},\textbf{w})$ and the second sum runs over all $w\in W_{(H,m),\textbf{c},r-l,s}(\textbf{x},\textbf{y}',\textbf{z},\textbf{w}')$. By \cref{rmk-nonzero_contributions}, for each graph $H$ and marking $m$ only one term in the outer sum provides a possibly non-zero contribution. The multiplication by the product of the entries of the vector $\mathbf{z}$ is an expedient to ensure that the quasipolynomial weight has the same form as the function $\pi_Y$ in \cref{cor-EhrhartY_qp}. Then we can write
    \small
    \begin{equation}
        G_{(\textbf{d}^r;\textbf{d}^l),\textbf{c},g,s}^{m_1,n_2,m_3}(\textbf{x},\textbf{y},\textbf{z},\textbf{w})=\frac{1}{\beta!\delta!\tilde\beta!\tilde\delta!}\sum_{(H,m)\in\mathcal G}(-1)^{\frac{|BV(H)\cap\mathcal I(H,m,s)|}{2}}\sum_{(r,l)}\sum_{\substack{(\sigma,\tau)\in\\ S_{n_2}\times S_{\lfloor\frac{n_2}{2}\rfloor}}}\frac{G_{(H,m),\textbf{c},r-l,s}(\textbf{x},\sigma(\textbf{y}),\textbf{z},\tau(\textbf{w}))}{\prod z_i}.
    \end{equation}
    \normalsize
    The idea is the same as in the proof of \cref{thm-quasipolynomiality}. The internal sum in \cref{eq-map1} equals the weighted vector partition function $\mathcal P_{H,\pi_{\mathcal I(H,m,s)}}(\textbf{k})$, which is by \cref{cor-EhrhartY_qp} quasipolynomial relative to the chambers of the discriminant arrangement $\{\textbf{k}\in\mathbb R^{V(H)}|\sum\textbf{k}(v)=0\}$, when restricted to the subspace $H_{r-l}(\textbf{x},\textbf{y}',\textbf{z},\textbf{w}')$ determined by the equations
    \begin{align}
        &\textbf{k}(w_i)=x_i,\qquad\textbf{k}(w_j)=y'_j,\qquad\textbf{k}(b_i)=r_i-l_i\text{ for all black }b_i\\
        &\textbf{k}(\tilde w_h)=\textbf{k}(\tilde w_{h+1})=z_i,\qquad\textbf{k}(\tilde w_e)=\textbf{k}(\tilde w_{e+1})=w'_e.
    \end{align}
    Therefore, the internal sum is piecewise quasipolynomial relative to the chambers of the discriminant arrangement in $H_{r-l}(\textbf{x},\textbf{y}',\textbf{z},\textbf{w}')$, denoted by $S_{r-l}(\textbf{x},\textbf{y}',\textbf{z},\textbf{w}')$. The map $G_{(H,m),\textbf{c},r-l,s}(\textbf{x},\textbf{y},\textbf{z},\textbf{w})$ is then piecewise quasipolynomial relative to the chambers of the common refinement $\displaystyle\bigcup S_{r-l}(\textbf{x},\textbf{y}',\textbf{z},\textbf{w}')$. Finally, by following the same reasoning in the proof of \cref{thm-quasipolynomiality}, the map $G_{(\textbf{d}^r;\textbf{d}^l),\textbf{c},g,s}^{m_1,n_2,m_3}(\textbf{x},\textbf{y},\textbf{z},\textbf{w})$ is piecewise quasipolynomial relative to the chambers of 
    \begin{equation}
        \bigcup_{(r,l)}\bigcup_{(\textbf{x},\textbf{y}',\textbf{z},\textbf{w}')\sim_{\tilde\Lambda}(\textbf{x},\textbf{y},\textbf{z},\textbf{w})} S_{r-l}(\textbf{x},\textbf{y}',\textbf{z},\textbf{w}').\tag*{\qedhere}
    \end{equation}
    
\end{proof}
\begin{corollary}
    The map $\tilde G_{(\textbf{d}^r;\textbf{d}^l),\textbf{c},g,s}^{m_1,n_2,m_3}(\textbf{x},\textbf{y},\textbf{z})$ defined on the quotient $\tilde\Lambda/\sim_{\tilde\Lambda}$ is piecewise quasipolynomial relative to the chambers of $(\tilde\Lambda/\sim_{\tilde\Lambda})\setminus(\mathcal K^{m_1,n_2,m_3}(\textbf{c})/\sim_{\tilde\Lambda})$.
\end{corollary}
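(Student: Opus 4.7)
The plan is to exploit the canonical representatives provided by the remark preceding \cref{thm-rpqp}: every equivalence class in $\tilde\Lambda/\sim_{\tilde\Lambda}$ contains a unique (up to permutation of the central entries) representative with $\textbf{w}=\textbf{0}$, obtained by converting each imaginary central pair into two real central vertices. Under this identification, the quotient lattice is realised as the affine slice $\Sigma=\{(\textbf{x},\textbf{y},\textbf{z},\textbf{0})\in\tilde\Lambda\}$, and the map $\tilde G_{(\textbf{d}^r;\textbf{d}^l),\textbf{c},g,s}^{m_1,m_3,n_2}$ coincides with the restriction of $G_{(\textbf{d}^r;\textbf{d}^l),\textbf{c},g,s}^{m_1,m_3,n_2}$ to $\Sigma$. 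This is essentially the content of the remark, which already observes that $G(\textbf{x},\textbf{y},\textbf{z},\textbf{w})=G(\textbf{x},\textbf{y}',\textbf{z},\textbf{0})$ whenever $(\textbf{x},\textbf{y},\textbf{z},\textbf{w})\sim_{\tilde\Lambda}(\textbf{x},\textbf{y}',\textbf{z},\textbf{0})$.

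Next, I would verify that the restriction to $\Sigma$ of the hyperplane arrangement $\mathcal K^{m_1,m_3,n_2}(\textbf{c})$ agrees with the quotient arrangement $\mathcal K^{m_1,m_3,n_2}(\textbf{c})/\sim_{\tilde\Lambda}$. Setting $\textbf{w}=\textbf{0}$ in the defining equations of $\mathcal K^{m_1,m_3,n_2}(\textbf{c})$, the hyperplanes of the first kind lose their $2\sum_{t\in\tilde T}w_t$ term and become hyperplanes in $\Sigma$ (or degenerate to the whole of $\Sigma$, in which case they contribute nothing to the chamber decomposition), the hyperplanes $y_i-y_j=0$ are preserved, and the hyperplanes $w_i-w_j=0$ restrict to the trivial equation $0=0$. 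The induced arrangement on $\Sigma$ therefore matches exactly the description of $\mathcal K^{m_1,m_3,n_2}(\textbf{c})/\sim_{\tilde\Lambda}$ in the statement.

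Finally, I would apply \cref{thm-rpqp} to conclude that $G_{(\textbf{d}^r;\textbf{d}^l),\textbf{c},g,s}^{m_1,m_3,n_2}$ is piecewise quasipolynomial relative to the chambers of $\tilde\Lambda\setminus\mathcal K^{m_1,m_3,n_2}(\textbf{c})$, and invoke the elementary fact that the restriction of a piecewise quasipolynomial function to an affine sublattice obtained by setting coordinates to zero is again piecewise quasipolynomial relative to the restricted chamber complex, since each quasipolynomial piece remains quasipolynomial in the surviving coordinates. Combined with the identifications of the preceding two paragraphs, this yields the claim.

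The main obstacle, if any, is in the bookkeeping of the identification between $\tilde\Lambda/\sim_{\tilde\Lambda}$ and the slice $\Sigma$: one must be careful that each chamber of the restricted arrangement genuinely arises as the image of a chamber of the ambient arrangement and that the quasipolynomials assigned by \cref{thm-rpqp} glue consistently along intersections produced by collapsing the $w_i-w_j=0$ hyperplanes. Both points follow immediately from the uniqueness of the $\textbf{w}=\textbf{0}$ representative and the explicit matching of hyperplanes described above, so no additional work beyond what is already contained in \cref{thm-rpqp} is required.
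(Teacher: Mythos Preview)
Your proposal is correct and matches the paper's treatment: the paper states this as an immediate corollary of \cref{thm-rpqp} with no separate proof, relying implicitly on the preceding remark that the map descends to the quotient via the $\textbf{w}=\textbf{0}$ representatives. Your write-up simply makes explicit the identification of $\tilde\Lambda/\sim_{\tilde\Lambda}$ with the slice $\Sigma$ and the induced restriction of the hyperplane arrangement, which is exactly the unstated content behind the word ``Corollary''.
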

\begin{remark}
    In the case $s=0$ and $\textbf{z}=0$, the map $G_{(\textbf{d}^r;\textbf{d}^l),\textbf{c},g,0}^{n_1,n_2,0}(\textbf{x},\textbf{y},0,0)$ coincides with the map $G_{(\textbf{d}^r;\textbf{d}^l),\textbf{c},g}^{n_1,n_2}(\textbf{x},\textbf{y})$. Therefore \cref{thm-rpqp} can be seen as a generalization of \cref{thm-quasipolynomiality}.
\end{remark}

\subsection{Example}

In this section, we provide an example to illustrate the result of \cref{thm-rpqp}. We consider the same data of \cref{subsec:example1} except for $g=0$ and the parameter $s=1$. Here, the domain $\tilde\Lambda$ has the following form
\begin{equation}
    \tilde\Lambda=\{(x_1,y_1,y_2,w_1)\in\mathbb Z^4|x_1+y_1+y_2+2w_1+k=0\}.
\end{equation}
First of all, note that we have $(x_1,y_1,y_2,0)\sim_{\tilde\Lambda}(x_1,0,0,w_1)$ if and only if $y_1=y_2$ and $w_1=y_1$. We depict the hyperplane arrangement in \cref{fig:hyperplane_arrangement2}. 

\begin{figure}
    \centering       

    \tikzset{every picture/.style={line width=0.75pt}} %set default line width to 0.75pt        

    \begin{tikzpicture}[x=0.75pt,y=0.75pt,yscale=-0.75,xscale=0.75]
    %uncomment if require: \path (0,708); %set diagram left start at 0, and has height of 708

        %Straight Lines [id:da1392665430229072] 
        \draw    (390,452) -- (491.71,311.3) -- (650,102) ;
        %Straight Lines [id:da6693041887501123] 
        \draw    (479,452) -- (719,132) ;
        %Straight Lines [id:da647155129306513] 
        \draw    (350,251) -- (803,251) ;
        %Straight Lines [id:da4946947233704948] 
        \draw    (704,453) -- (415,105) ;
        %Straight Lines [id:da2994470874358284] 
        \draw    (644,108) -- (607.53,77.29) ;
        \draw [shift={(606,76)}, rotate = 40.1] [color={rgb, 255:red, 0; green, 0; blue, 0 }  ][line width=0.75]    (10.93,-3.29) .. controls (6.95,-1.4) and (3.31,-0.3) .. (0,0) .. controls (3.31,0.3) and (6.95,1.4) .. (10.93,3.29)   ;
        %Straight Lines [id:da587505475862875] 
        \draw    (704,151) -- (667.53,120.29) ;
        \draw [shift={(666,119)}, rotate = 40.1] [color={rgb, 255:red, 0; green, 0; blue, 0 }  ][line width=0.75]    (10.93,-3.29) .. controls (6.95,-1.4) and (3.31,-0.3) .. (0,0) .. controls (3.31,0.3) and (6.95,1.4) .. (10.93,3.29)   ;
        %Straight Lines [id:da7396797328116171] 
        \draw    (792,443) -- (828.08,411.83) ;
        \draw [shift={(829.6,410.53)}, rotate = 139.18] [color={rgb, 255:red, 0; green, 0; blue, 0 }  ][line width=0.75]    (10.93,-3.29) .. controls (6.95,-1.4) and (3.31,-0.3) .. (0,0) .. controls (3.31,0.3) and (6.95,1.4) .. (10.93,3.29)   ;
        %Straight Lines [id:da5193072970569319] 
        \draw    (684.76,430.28) -- (722.64,401.33) ;
        \draw [shift={(724.23,400.12)}, rotate = 142.62] [color={rgb, 255:red, 0; green, 0; blue, 0 }  ][line width=0.75]    (10.93,-3.29) .. controls (6.95,-1.4) and (3.31,-0.3) .. (0,0) .. controls (3.31,0.3) and (6.95,1.4) .. (10.93,3.29)   ;
        %Straight Lines [id:da3790203465520465] 
        \draw    (398.2,249.35) -- (397.22,297.02) ;
        \draw [shift={(397.18,299.02)}, rotate = 271.18] [color={rgb, 255:red, 0; green, 0; blue, 0 }  ][line width=0.75]    (10.93,-3.29) .. controls (6.95,-1.4) and (3.31,-0.3) .. (0,0) .. controls (3.31,0.3) and (6.95,1.4) .. (10.93,3.29)   ;
        %Straight Lines [id:da5492089170150714] 
        \draw    (398.39,307.74) -- (399.09,355.42) ;
        \draw [shift={(399.12,357.42)}, rotate = 269.16] [color={rgb, 255:red, 0; green, 0; blue, 0 }  ][line width=0.75]    (10.93,-3.29) .. controls (6.95,-1.4) and (3.31,-0.3) .. (0,0) .. controls (3.31,0.3) and (6.95,1.4) .. (10.93,3.29)   ;
        %Shape: Triangle [id:dp8024624317878942] 
        \draw  [fill={rgb, 255:red, 208; green, 2; blue, 2 }  ,fill opacity=1 ] (537,251) -- (586,310) -- (491.71,310) -- cycle ;
        %Straight Lines [id:da403828750718922] 
        \draw  [dash pattern={on 4.5pt off 4.5pt}]  (801,311) -- (792,443) ;
        %Shape: Triangle [id:dp7152191171784666] 
        \draw  [fill={rgb, 255:red, 208; green, 2; blue, 2 }  ,fill opacity=1 ][dash pattern={on 4.5pt off 4.5pt}][line width=0.75]  (679.87,311.06) -- (801.04,310.26) -- (792.01,443) -- cycle ;
        %Straight Lines [id:da0317587792891425] 
        \draw [color={rgb, 255:red, 144; green, 19; blue, 254 }  ,draw opacity=1 ]   (370,180) -- (871,391) ;
        %Straight Lines [id:da2770936606819433] 
        \draw    (792.01,443) -- (502.01,103) ;
        %Straight Lines [id:da11185152970017498] 
        \draw    (352,309) -- (801,311) ;

        % Text Node
        \draw (634,60) node [anchor=north west][inner sep=0.75pt]    {$x_{1}  >0$};
        % Text Node
        \draw (678,94) node [anchor=north west][inner sep=0.75pt]    {$x_{1} +k >0$};
        % Text Node
        \draw (806.62,419.76) node [anchor=north west][inner sep=0.75pt]    {$y_{1}  >0$};
        % Text Node
        \draw (695.69,420.86) node [anchor=north west][inner sep=0.75pt]    {$y_{1} +k >0$};
        % Text Node
        \draw (309.19,259.62) node [anchor=north west][inner sep=0.75pt]    {$y_{2} +k >0$};
        % Text Node
        \draw (330.4,324.23) node [anchor=north west][inner sep=0.75pt]    {$y_{2}  >0$};
        % Text Node
        \draw (338,195) node [anchor=north west][inner sep=0.75pt]    {$\textcolor[rgb]{0.56,0.07,1}{y}\textcolor[rgb]{0.56,0.07,1}{_{1}}\textcolor[rgb]{0.56,0.07,1}{=y}\textcolor[rgb]{0.56,0.07,1}{_{2}}$};

    \end{tikzpicture}
    
    \caption{Hyperplane arrangement in $\tilde\Lambda$.}
    \label{fig:hyperplane_arrangement2}
\end{figure}
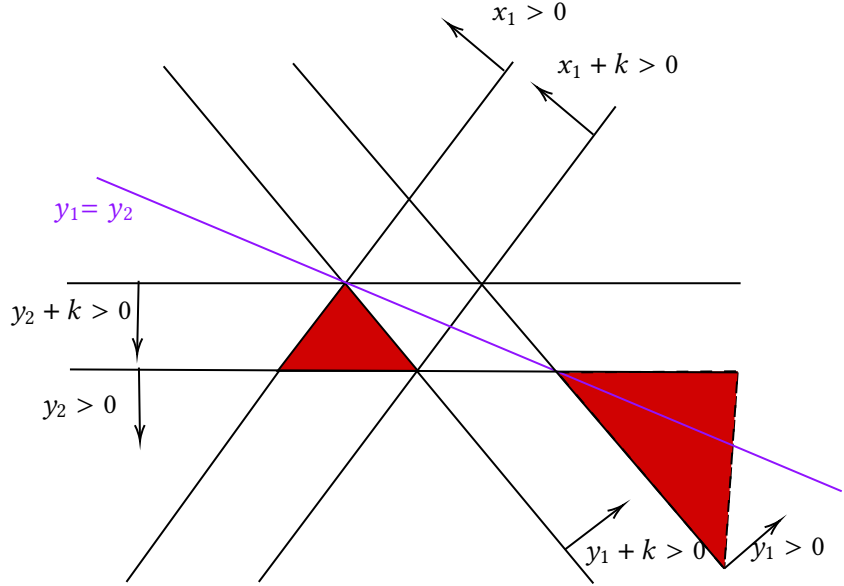

The chambers in red are the chambers in which we compute the map $G^{1,2}_{(\textbf{d}^r;\textbf{d}^l),\textbf{c},0,1}(x_1,y_1,y_2,0)$. We denote by $\mathbf{C}_1$ the chamber given by the inequalities $-k<x_1<0$, $y_1<-k$ and $-k<y_2<0$ and $\mathbf{C}_2$ the chamber given by the inequalities $x_1<-k$ and $y_1,y_2>0$. The choice of the chambers is not random: in the chamber $\mathbf{C}_1$, $y_1=y_2$ cannot happen, therefore the floor diagrams contributing to $G^{1,2}_{(\textbf{d}^r;\textbf{d}^l),\textbf{c},0,1}(x_1,y_1,y_2,0)$ can only have divergence sequence $(x_1,y_1,y_2,0)$, while in $\mathbf{C}_2$, on the line $y_1=y_2$, we have non-zero contribution from floor diagrams having divergence sequence $(x_1,y_1,y_1,0)$ and $(x_1,0,0,y_1)$.

Recall that $r_1-l=(k,0)$ and $r_2-l=(0,k)$. We compute first the map\\ 
$G^{1,2}_{(\textbf{d}^r;\textbf{d}^l),\textbf{c},0,1}(x_1,y_1,y_2,0)$ in the chamber $\mathbf{C}_1$. We list all the $1$-real floor diagrams satisfying the inequalities of the chamber, with divergence sequence $(x_1,y_1,y_2,0)$ and black vertices having divergence $r_1-l$ and $r_2-l$ in \cref{table3} and \cref{table4} respectively. Note that in \cref{table3} we provide markings for some floor diagrams.

\begin{table}
        \centering
        
        % [inline block 1: 2 envs, 65576 chars -> data_tex | \begin{tabular}{|p{0.02\textwidth}|p{0.29\textwidth}|p{0.29\textwidth}|p{0.29\textwidth}|}         \hline ...]


    \caption{$1$-real floor diagrams with divergence sequence $r_2-l$ contributing to $G^{1,2}_{(\textbf{d}^r;\textbf{d}^l),\textbf{c},0,1}(x_1,y_1,y_2,0)$ in $\mathbf{C}_1$.}
    \label{table4}
        
\end{table}\vspace{\baselineskip}    
As an example, we compute the multiplicity of the floor diagram $C1$ and then we provide the expression of the map $G^{1,2}_{(\textbf{d}^r;\textbf{d}^l),\textbf{c},0,1}(x_1,y_1,y_2,0)$. The only $1$-pair is $\{1,2\}$. Therefore, both floor diagrams in \cref{fig:example_multiplicity} have empty imaginary part and the function $\rho_{\mathcal D,m}$ is the identity in each case, so they are $1$-real floor diagram having multiplicity $-y_1$ and $-y_2$ respectively, as long as $x_1,y_1,y_2,k$ are odd. We distinguish, as in \cref{subsec:example1}, two cases.\\
If $k\geq0$ is \textbf{even}, $x_1$ is even as long as $y_1$ and $y_2$ are odd. Therefore, in this case $G^{1,2}_{(\textbf{d}^r;\textbf{d}^l),\textbf{c},0,1}(x_1,y_1,y_2,0)=0$.\\
If $k\geq0$ is \textbf{odd}, the graphs contributing non-zero to $G^{1,2}_{(\textbf{d}^r;\textbf{d}^l),\textbf{c},0,1}(x_1,y_1,y_2,0)$ are $A1$, $B2$, $B3$, $C1$, $C2$, $C3$, $D1$, $D3$, $E1$, $E2$, $E3$ in \cref{table3}. The graphs in \cref{table4} have multiplicity zero since the black-black edge has even weight. So we get
\begin{equation}
    2k-6x_1-7y_1-7y_2\qquad\text{ if }x_1,y_1,y_2\text{ are odd.}
\end{equation}
Putting all together
\begin{equation}
    G^{1,2}_{(\textbf{d}^r;\textbf{d}^l),0,1}(x_1,y_1,y_2,0,k)=
    \begin{cases}
        2k-6x_1-7y_1-7y_2 &\text{if }x_1,y_1,y_2,k\text{ are odd}\\
        0 &\text{otherwise}
    \end{cases}
\end{equation}
Note that for each graph we have two non-zero contributions since we have to interchange $y_1$ and $y_2$.

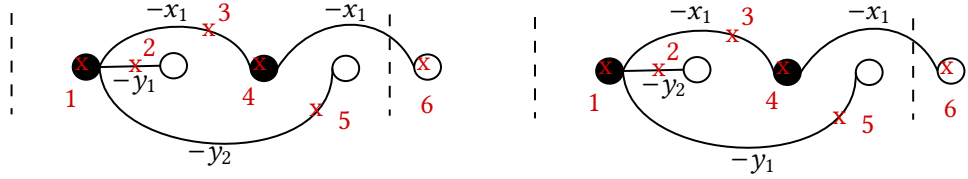
\begin{figure}
    \centering
    \tikzset{every picture/.style={line width=0.75pt}} %set default line width to 0.75pt        

    \begin{tikzpicture}[x=0.75pt,y=0.75pt,yscale=-1,xscale=1]
    %uncomment if require: \path (0,155); %set diagram left start at 0, and has height of 155

        %Shape: Ellipse [id:dp030960130868250224] 
        \draw   (219.89,48.27) .. controls (219.89,44.74) and (222.92,41.88) .. (226.65,41.88) .. controls (230.39,41.88) and (233.41,44.74) .. (233.41,48.27) .. controls (233.41,51.8) and (230.39,54.66) .. (226.65,54.66) .. controls (222.92,54.66) and (219.89,51.8) .. (219.89,48.27) -- cycle ;
        %Shape: Ellipse [id:dp21087651585991962] 
        \draw  [fill={rgb, 255:red, 0; green, 0; blue, 0 }  ,fill opacity=1 ] (47.67,48.27) .. controls (47.67,44.74) and (50.7,41.88) .. (54.43,41.88) .. controls (58.17,41.88) and (61.19,44.74) .. (61.19,48.27) .. controls (61.19,51.8) and (58.17,54.66) .. (54.43,54.66) .. controls (50.7,54.66) and (47.67,51.8) .. (47.67,48.27) -- cycle ;
        %Shape: Ellipse [id:dp9642104826769369] 
        \draw   (91.8,47.6) .. controls (91.8,44.07) and (94.82,41.21) .. (98.56,41.21) .. controls (102.29,41.21) and (105.32,44.07) .. (105.32,47.6) .. controls (105.32,51.13) and (102.29,53.99) .. (98.56,53.99) .. controls (94.82,53.99) and (91.8,51.13) .. (91.8,47.6) -- cycle ;
        %Shape: Ellipse [id:dp5200370876322641] 
        \draw  [fill={rgb, 255:red, 0; green, 0; blue, 0 }  ,fill opacity=1 ] (137.34,48.94) .. controls (137.34,45.41) and (140.37,42.55) .. (144.1,42.55) .. controls (147.83,42.55) and (150.86,45.41) .. (150.86,48.94) .. controls (150.86,52.48) and (147.83,55.34) .. (144.1,55.34) .. controls (140.37,55.34) and (137.34,52.48) .. (137.34,48.94) -- cycle ;
        %Shape: Ellipse [id:dp9335506693647481] 
        \draw  [fill={rgb, 255:red, 255; green, 255; blue, 255 }  ,fill opacity=1 ] (178.62,48.94) .. controls (178.62,45.41) and (181.64,42.55) .. (185.38,42.55) .. controls (189.11,42.55) and (192.14,45.41) .. (192.14,48.94) .. controls (192.14,52.48) and (189.11,55.34) .. (185.38,55.34) .. controls (181.64,55.34) and (178.62,52.48) .. (178.62,48.94) -- cycle ;
        %Straight Lines [id:da20452062763667456] 
        \draw  [dash pattern={on 4.5pt off 4.5pt}]  (16.83,21.02) -- (16.83,72.16) ;
        %Straight Lines [id:da13584017398600534] 
        \draw  [dash pattern={on 4.5pt off 4.5pt}]  (207.55,21.69) -- (207.55,72.83) ;
        %Curve Lines [id:da40723115152896805] 
        \draw    (61.19,48.27) .. controls (70.98,22.36) and (129.33,19.67) .. (137.34,48.94) ;
        %Straight Lines [id:da8037467632754793] 
        \draw    (61.19,48.27) -- (91.8,47.6) ;
        %Curve Lines [id:da30734335406358204] 
        \draw    (150.86,48.94) .. controls (169.19,19.67) and (207.62,19.67) .. (219.89,48.27) ;
        %Curve Lines [id:da5332008651818072] 
        \draw    (61.19,48.27) .. controls (63.77,97.06) and (180.48,102.44) .. (178.62,48.94) ;
        %Shape: Ellipse [id:dp7095483781318458] 
        \draw   (483.89,50.27) .. controls (483.89,46.74) and (486.92,43.88) .. (490.65,43.88) .. controls (494.39,43.88) and (497.41,46.74) .. (497.41,50.27) .. controls (497.41,53.8) and (494.39,56.66) .. (490.65,56.66) .. controls (486.92,56.66) and (483.89,53.8) .. (483.89,50.27) -- cycle ;
        %Shape: Ellipse [id:dp46195143898559154] 
        \draw  [fill={rgb, 255:red, 0; green, 0; blue, 0 }  ,fill opacity=1 ] (311.67,50.27) .. controls (311.67,46.74) and (314.7,43.88) .. (318.43,43.88) .. controls (322.17,43.88) and (325.19,46.74) .. (325.19,50.27) .. controls (325.19,53.8) and (322.17,56.66) .. (318.43,56.66) .. controls (314.7,56.66) and (311.67,53.8) .. (311.67,50.27) -- cycle ;
        %Shape: Ellipse [id:dp2651026846053923] 
        \draw   (355.8,49.6) .. controls (355.8,46.07) and (358.82,43.21) .. (362.56,43.21) .. controls (366.29,43.21) and (369.32,46.07) .. (369.32,49.6) .. controls (369.32,53.13) and (366.29,55.99) .. (362.56,55.99) .. controls (358.82,55.99) and (355.8,53.13) .. (355.8,49.6) -- cycle ;
        %Shape: Ellipse [id:dp6524052110538314] 
        \draw  [fill={rgb, 255:red, 0; green, 0; blue, 0 }  ,fill opacity=1 ] (401.34,50.94) .. controls (401.34,47.41) and (404.37,44.55) .. (408.1,44.55) .. controls (411.83,44.55) and (414.86,47.41) .. (414.86,50.94) .. controls (414.86,54.48) and (411.83,57.34) .. (408.1,57.34) .. controls (404.37,57.34) and (401.34,54.48) .. (401.34,50.94) -- cycle ;
        %Shape: Ellipse [id:dp5794812964765192] 
        \draw  [fill={rgb, 255:red, 255; green, 255; blue, 255 }  ,fill opacity=1 ] (442.62,50.94) .. controls (442.62,47.41) and (445.64,44.55) .. (449.38,44.55) .. controls (453.11,44.55) and (456.14,47.41) .. (456.14,50.94) .. controls (456.14,54.48) and (453.11,57.34) .. (449.38,57.34) .. controls (445.64,57.34) and (442.62,54.48) .. (442.62,50.94) -- cycle ;
        %Straight Lines [id:da3970724749278167] 
        \draw  [dash pattern={on 4.5pt off 4.5pt}]  (280.83,23.02) -- (280.83,74.16) ;
        %Straight Lines [id:da42731566569022705] 
        \draw  [dash pattern={on 4.5pt off 4.5pt}]  (471.55,23.69) -- (471.55,74.83) ;
        %Curve Lines [id:da4046670716966445] 
        \draw    (325.19,50.27) .. controls (334.98,24.36) and (393.33,21.67) .. (401.34,50.94) ;
        %Straight Lines [id:da2135694307811844] 
        \draw    (325.19,50.27) -- (355.8,49.6) ;
        %Curve Lines [id:da361799365686154] 
        \draw    (414.86,50.94) .. controls (433.19,21.67) and (471.62,21.67) .. (483.89,50.27) ;
        %Curve Lines [id:da143374241201911] 
        \draw    (325.19,50.27) .. controls (327.77,99.06) and (444.48,104.44) .. (442.62,50.94) ;

        % Text Node
        \draw (47.29,42.09) node [anchor=north west][inner sep=0.75pt]   [align=left] {\textcolor[rgb]{0.82,0.01,0.01}{x}};
        % Text Node
        \draw (74.33,43.09) node [anchor=north west][inner sep=0.75pt]   [align=left] {\textcolor[rgb]{0.82,0.01,0.01}{x}};
        % Text Node
        \draw (111.34,24.94) node [anchor=north west][inner sep=0.75pt]   [align=left] {\textcolor[rgb]{0.82,0.01,0.01}{x}};
        % Text Node
        \draw (136.96,42.09) node [anchor=north west][inner sep=0.75pt]   [align=left] {\textcolor[rgb]{0.82,0.01,0.01}{x}};
        % Text Node
        \draw (165.42,65.28) node [anchor=north west][inner sep=0.75pt]   [align=left] {\textcolor[rgb]{0.82,0.01,0.01}{x}};
        % Text Node
        \draw (219.51,42.09) node [anchor=north west][inner sep=0.75pt]   [align=left] {\textcolor[rgb]{0.82,0.01,0.01}{x}};
        % Text Node
        \draw (41.81,58.11) node [anchor=north west][inner sep=0.75pt]   {$\textcolor[rgb]{0.82,0.01,0.01}{1}$};
        % Text Node
        \draw (81.42,32.43) node [anchor=north west][inner sep=0.75pt]   {$\textcolor[rgb]{0.82,0.01,0.01}{2}$};
        % Text Node
        \draw (119.36,14.66) node [anchor=north west][inner sep=0.75pt]  {$\textcolor[rgb]{0.82,0.01,0.01}{3}$};
        % Text Node
        \draw (131.48,57.1) node [anchor=north west][inner sep=0.75pt]  {$\textcolor[rgb]{0.82,0.01,0.01}{4}$};
        % Text Node
        \draw (179.87,68.87) node [anchor=north west][inner sep=0.75pt]  {$\textcolor[rgb]{0.82,0.01,0.01}{5}$};
        % Text Node
        \draw (221.59,60.8) node [anchor=north west][inner sep=0.75pt]   {$\textcolor[rgb]{0.82,0.01,0.01}{6}$};
        % Text Node
        \draw (66,51) node [anchor=north west][inner sep=0.75pt]    {$-y_{1}$};
        % Text Node
        \draw (104,87) node [anchor=north west][inner sep=0.75pt]    {$-y_{2}$};
        % Text Node
        \draw (173,15) node [anchor=north west][inner sep=0.75pt]    {$-x_{1}$};
        % Text Node
        \draw (83,15) node [anchor=north west][inner sep=0.75pt]    {$-x_{1}$};
        % Text Node
        \draw (311.29,44.09) node [anchor=north west][inner sep=0.75pt]   [align=left] {\textcolor[rgb]{0.82,0.01,0.01}{x}};
        % Text Node
        \draw (338.33,45.09) node [anchor=north west][inner sep=0.75pt]   [align=left] {\textcolor[rgb]{0.82,0.01,0.01}{x}};
        % Text Node
        \draw (375.34,27.94) node [anchor=north west][inner sep=0.75pt]   [align=left] {\textcolor[rgb]{0.82,0.01,0.01}{x}};
        % Text Node
        \draw (400.96,44.09) node [anchor=north west][inner sep=0.75pt]   [align=left] {\textcolor[rgb]{0.82,0.01,0.01}{x}};
        % Text Node
        \draw (429.42,69.28) node [anchor=north west][inner sep=0.75pt]   [align=left] {\textcolor[rgb]{0.82,0.01,0.01}{x}};
        % Text Node
        \draw (483.51,44.09) node [anchor=north west][inner sep=0.75pt]   [align=left] {\textcolor[rgb]{0.82,0.01,0.01}{x}};
        % Text Node
        \draw (305.81,60.11) node [anchor=north west][inner sep=0.75pt]  {$\textcolor[rgb]{0.82,0.01,0.01}{1}$};
        % Text Node
        \draw (345.42,34.43) node [anchor=north west][inner sep=0.75pt]  {$\textcolor[rgb]{0.82,0.01,0.01}{2}$};
        % Text Node
        \draw (383.36,16.66) node [anchor=north west][inner sep=0.75pt]  {$\textcolor[rgb]{0.82,0.01,0.01}{3}$};
        % Text Node
        \draw (395.48,59.1) node [anchor=north west][inner sep=0.75pt]  {$\textcolor[rgb]{0.82,0.01,0.01}{4}$};
        % Text Node
        \draw (443.87,70.87) node [anchor=north west][inner sep=0.75pt]  {$\textcolor[rgb]{0.82,0.01,0.01}{5}$};
        % Text Node
        \draw (485.59,62.8) node [anchor=north west][inner sep=0.75pt]  {$\textcolor[rgb]{0.82,0.01,0.01}{6}$};
        % Text Node
        \draw (378,91) node [anchor=north west][inner sep=0.75pt]    {$-y_{1}$};
        % Text Node
        \draw (333,53) node [anchor=north west][inner sep=0.75pt]    {$-y_{2}$};
        % Text Node
        \draw (437,16) node [anchor=north west][inner sep=0.75pt]    {$-x_{1}$};
        % Text Node
        \draw (347,16) node [anchor=north west][inner sep=0.75pt]    {$-x_{1}$};

    \end{tikzpicture}

    \caption{The floor diagram $C1$ with two different weightings.}
    \label{fig:example_multiplicity}
\end{figure}

Now, we compute the map $G^{1,2}_{(\textbf{d}^r;\textbf{d}^l),\textbf{c},0,1}(x_1,y_1,y_2,0)$ in the chamber $\mathbf{C}_2$. We list all the floor diagrams having black divergence sequence $r_1-l$ in \cref{table5} and $r_2-l$ in \cref{table6}. Note that the floor diagrams $A'1,A'2,B'2,C'1$ contribute to $G^{1,2}_{(\textbf{d}^r;\textbf{d}^l),\textbf{c},0,1}(x_1,y_1,y_2,0)$ only when $y_1=y_2$.\\
If $k\geq0$ is \textbf{even}, we have again $G^{1,2}_{(\textbf{d}^r;\textbf{d}^l),\textbf{c},0,1}(x_1,y_1,y_2,0)=0$.\\
If $k\geq0$ is \textbf{odd}, then $G^{1,2}_{(\textbf{d}^r;\textbf{d}^l),\textbf{c},0,1}(x_1,y_1,y_2,0)$ has the following form
\begin{equation}
    G^{1,2}_{(\textbf{d}^r;\textbf{d}^l),\textbf{c},0,1}(x_1,y_1,y_2,0)=
    \begin{cases}
        2k+y_1+y_2 &\text{if }y_1\neq y_2\text{ and }x_1,y_1,y_2\text{ odd}\\
        3y_1^2 &\text{if }y_1=y_2,\>x_1\text{ odd, }y_1\text{ even}\\
        3y_1^2+2y_1+2k &\text{if }y_1=y_2,\>x_1,y_1\text{ odd}\\
        0 &\text{otherwise}
    \end{cases}
\end{equation}

In particular, the first polynomial piece comes from the graphs $A3$ and $C'2$, while the second and third polynomial pieces come from the graphs $A1$, $A2$, $B1$ and $A1$, $A2$, $A3$, $C'1$, $C'2$ respectively. As an example we compute the multiplicity of $A'2$ in \cref{fig:computation}: the only $1$-pair is $\{1,2\}$ and it belongs to the imaginary part since $1$ and $2$ are not adjacent. It follows that $\rho_{\mathcal D,m}$ exchanges $1$ and $2$, therefore the floor diagram is a $1$-real floor diagram if and only if $y_1=y_2$ and in this case its multiplicity is $y_1^2$. 

\begin{figure}[H]
    \centering
    \tikzset{every picture/.style={line width=0.75pt}} %set default line width to 0.75pt        

    \begin{tikzpicture}[x=1pt,y=1pt,yscale=-1,xscale=1]
    %uncomment if require: \path (0,195); %set diagram left start at 0, and has height of 195

        %Shape: Ellipse [id:dp4220938223895735] 
        \draw   (362.25,151.3) .. controls (362.25,147.89) and (364.99,145.13) .. (368.37,145.13) .. controls (371.76,145.13) and (374.5,147.89) .. (374.5,151.3) .. controls (374.5,154.71) and (371.76,157.47) .. (368.37,157.47) .. controls (364.99,157.47) and (362.25,154.71) .. (362.25,151.3) -- cycle ;
        %Shape: Ellipse [id:dp7726897794470692] 
        \draw  [fill={rgb, 255:red, 255; green, 255; blue, 255 }  ,fill opacity=1 ] (206.17,150.65) .. controls (206.17,147.24) and (208.92,144.48) .. (212.3,144.48) .. controls (215.68,144.48) and (218.43,147.24) .. (218.43,150.65) .. controls (218.43,154.06) and (215.68,156.82) .. (212.3,156.82) .. controls (208.92,156.82) and (206.17,154.06) .. (206.17,150.65) -- cycle ;
        %Shape: Ellipse [id:dp5282240132679368] 
        \draw   (250.03,151.3) .. controls (250.03,147.89) and (252.77,145.13) .. (256.16,145.13) .. controls (259.54,145.13) and (262.28,147.89) .. (262.28,151.3) .. controls (262.28,154.71) and (259.54,157.47) .. (256.16,157.47) .. controls (252.77,157.47) and (250.03,154.71) .. (250.03,151.3) -- cycle ;
        %Shape: Ellipse [id:dp08314303669558132] 
        \draw  [fill={rgb, 255:red, 0; green, 0; blue, 0 }  ,fill opacity=1 ] (287.43,151.3) .. controls (287.43,147.89) and (290.18,145.13) .. (293.56,145.13) .. controls (296.94,145.13) and (299.69,147.89) .. (299.69,151.3) .. controls (299.69,154.71) and (296.94,157.47) .. (293.56,157.47) .. controls (290.18,157.47) and (287.43,154.71) .. (287.43,151.3) -- cycle ;
        %Shape: Ellipse [id:dp3476707690603458] 
        \draw  [fill={rgb, 255:red, 0; green, 0; blue, 0 }  ,fill opacity=1 ] (324.84,151.3) .. controls (324.84,147.89) and (327.58,145.13) .. (330.97,145.13) .. controls (334.35,145.13) and (337.09,147.89) .. (337.09,151.3) .. controls (337.09,154.71) and (334.35,157.47) .. (330.97,157.47) .. controls (327.58,157.47) and (324.84,154.71) .. (324.84,151.3) -- cycle ;
        %Straight Lines [id:da8486319284274382] 
        \draw  [dash pattern={on 4.5pt off 4.5pt}]  (189.83,125.65) -- (189.83,175) ;
        %Straight Lines [id:da34547756254043194] 
        \draw  [dash pattern={on 4.5pt off 4.5pt}]  (351.06,125) -- (351.06,174.35) ;
        %Curve Lines [id:da03317112159016644] 
        \draw    (218.43,150.65) .. controls (236.28,121.75) and (314.96,123.05) .. (324.84,151.3) ;
        %Straight Lines [id:da6850010153443786] 
        \draw    (299.69,151.3) -- (324.84,151.3) ;
        %Straight Lines [id:da3487167350921443] 
        \draw    (337.09,151.3) -- (362.25,151.3) ;
        %Curve Lines [id:da40971608412310545] 
        \draw    (262.28,151.3) .. controls (267.24,182.79) and (317.54,182.79) .. (324.84,151.3) ;

        % Text Node
        \draw (256,166) node [anchor=north west][inner sep=0.75pt]    {$y_{1}$};
        % Text Node
        \draw (236,120) node [anchor=north west][inner sep=0.75pt]    {$y_{2}$};
        % Text Node
        \draw (305,140) node [anchor=north west][inner sep=0.75pt]    {$k$};
        % Text Node
        \draw (332,140) node [anchor=north west][inner sep=0.75pt]    {$-x_{1}$};
        % Text Node
        \draw (226.29,136.09) node [anchor=north west][inner sep=0.75pt]   [align=left] {\textcolor[rgb]{0.82,0.01,0.01}{x}};
        % Text Node
        \draw (281.29,170.09) node [anchor=north west][inner sep=0.75pt]   [align=left] {\textcolor[rgb]{0.82,0.01,0.01}{x}};
        % Text Node
        \draw (290.29,148.09) node [anchor=north west][inner sep=0.75pt]   [align=left] {\textcolor[rgb]{0.82,0.01,0.01}{x}};
        % Text Node
        \draw (307.29,148.09) node [anchor=north west][inner sep=0.75pt]   [align=left] {\textcolor[rgb]{0.82,0.01,0.01}{x}};
        % Text Node
        \draw (328.29,148.09) node [anchor=north west][inner sep=0.75pt]   [align=left] {\textcolor[rgb]{0.82,0.01,0.01}{x}};
        % Text Node
        \draw (365.29,148.09) node [anchor=north west][inner sep=0.75pt]   [align=left] {\textcolor[rgb]{0.82,0.01,0.01}{x}};
        % Text Node
        \draw (226.29,128.11) node [anchor=north west][inner sep=0.75pt]  [font=\normalsize]  {$\textcolor[rgb]{0.82,0.01,0.01}{1}$};
        % Text Node
        \draw (281.29,176.43) node [anchor=north west][inner sep=0.75pt]  [font=\normalsize]  {$\textcolor[rgb]{0.82,0.01,0.01}{2}$};
        % Text Node
        \draw (278.36,148.09) node [anchor=north west][inner sep=0.75pt]    {$\textcolor[rgb]{0.82,0.01,0.01}{3}$};
        % Text Node
        \draw (307.29,157.1) node [anchor=north west][inner sep=0.75pt]    {$\textcolor[rgb]{0.82,0.01,0.01}{4}$};
        % Text Node
        \draw (330.97,157.47) node [anchor=north west][inner sep=0.75pt]  [font=\normalsize]  {$\textcolor[rgb]{0.82,0.01,0.01}{5}$};
        % Text Node
        \draw (365.29,161.8) node [anchor=north west][inner sep=0.75pt]  [font=\normalsize]  {$\textcolor[rgb]{0.82,0.01,0.01}{6}$};

    \end{tikzpicture}

    \caption{}
    \label{fig:computation}
\end{figure}

\begin{table}[H]
    \centering
        
    \begin{tabular}{|p{0.02\textwidth}|p{0.29\textwidth}|p{0.29\textwidth}|p{0.29\textwidth}|}
        \hline 
        & \begin{center}
            $\displaystyle 1$
        \end{center}
        & \begin{center}
            $\displaystyle 2$
        \end{center}
        & \begin{center}
            $\displaystyle 3$
        \end{center}
        \\
        \hline 
        A' & 

        \tikzset{every picture/.style={line width=0.75pt}} %set default line width to 0.75pt        
        
        \begin{tikzpicture}[x=0.75pt,y=0.75pt,yscale=-1,xscale=1]
        %uncomment if require: \path (0,80); %set diagram left start at 0, and has height of 80

            %Shape: Circle [id:dp9426623286342491] 
            \draw   (140.5,39.25) .. controls (140.5,36.63) and (142.63,34.5) .. (145.25,34.5) .. controls (147.87,34.5) and (150,36.63) .. (150,39.25) .. controls (150,41.87) and (147.87,44) .. (145.25,44) .. controls (142.63,44) and (140.5,41.87) .. (140.5,39.25) -- cycle ;
            %Shape: Circle [id:dp9409447774071522] 
            \draw  [fill={rgb, 255:red, 255; green, 255; blue, 255 }  ,fill opacity=1 ] (19.5,38.75) .. controls (19.5,36.13) and (21.63,34) .. (24.25,34) .. controls (26.87,34) and (29,36.13) .. (29,38.75) .. controls (29,41.37) and (26.87,43.5) .. (24.25,43.5) .. controls (21.63,43.5) and (19.5,41.37) .. (19.5,38.75) -- cycle ;
            %Shape: Circle [id:dp28373340194468843] 
            \draw   (53.5,39.25) .. controls (53.5,36.63) and (55.63,34.5) .. (58.25,34.5) .. controls (60.87,34.5) and (63,36.63) .. (63,39.25) .. controls (63,41.87) and (60.87,44) .. (58.25,44) .. controls (55.63,44) and (53.5,41.87) .. (53.5,39.25) -- cycle ;
            %Shape: Circle [id:dp010943261234274804] 
            \draw  [fill={rgb, 255:red, 0; green, 0; blue, 0 }  ,fill opacity=1 ] (82.5,39.25) .. controls (82.5,36.63) and (84.63,34.5) .. (87.25,34.5) .. controls (89.87,34.5) and (92,36.63) .. (92,39.25) .. controls (92,41.87) and (89.87,44) .. (87.25,44) .. controls (84.63,44) and (82.5,41.87) .. (82.5,39.25) -- cycle ;
            %Shape: Circle [id:dp9130931662797478] 
            \draw  [fill={rgb, 255:red, 0; green, 0; blue, 0 }  ,fill opacity=1 ] (111.5,39.25) .. controls (111.5,36.63) and (113.63,34.5) .. (116.25,34.5) .. controls (118.87,34.5) and (121,36.63) .. (121,39.25) .. controls (121,41.87) and (118.87,44) .. (116.25,44) .. controls (113.63,44) and (111.5,41.87) .. (111.5,39.25) -- cycle ;
            %Straight Lines [id:da6821601715331341] 
            \draw  [dash pattern={on 4.5pt off 4.5pt}]  (6.83,19.5) -- (6.83,57.5) ;
            %Straight Lines [id:da24074357504111266] 
            \draw  [dash pattern={on 4.5pt off 4.5pt}]  (131.83,19) -- (131.83,57) ;
            %Curve Lines [id:da31454333827231873] 
            \draw    (29,38.75) .. controls (35.88,19.5) and (76.88,17.5) .. (82.5,39.25) ;
            %Straight Lines [id:da3010179875409813] 
            \draw    (92,39.25) -- (111.5,39.25) ;
            %Straight Lines [id:da8308130740996593] 
            \draw    (63,39.25) -- (82.5,39.25) ;
            %Straight Lines [id:da818444900659147] 
            \draw    (121,39.25) -- (140.5,39.25) ;

        \end{tikzpicture}
        & 

        \tikzset{every picture/.style={line width=0.75pt}} %set default line width to 0.75pt        
        
        \begin{tikzpicture}[x=0.75pt,y=0.75pt,yscale=-1,xscale=1]
        %uncomment if require: \path (0,80); %set diagram left start at 0, and has height of 80

            %Shape: Circle [id:dp21011024444394288] 
            \draw   (140.5,39.25) .. controls (140.5,36.63) and (142.63,34.5) .. (145.25,34.5) .. controls (147.87,34.5) and (150,36.63) .. (150,39.25) .. controls (150,41.87) and (147.87,44) .. (145.25,44) .. controls (142.63,44) and (140.5,41.87) .. (140.5,39.25) -- cycle ;
            %Shape: Circle [id:dp45657998792002563] 
            \draw  [fill={rgb, 255:red, 255; green, 255; blue, 255 }  ,fill opacity=1 ] (19.5,38.75) .. controls (19.5,36.13) and (21.63,34) .. (24.25,34) .. controls (26.87,34) and (29,36.13) .. (29,38.75) .. controls (29,41.37) and (26.87,43.5) .. (24.25,43.5) .. controls (21.63,43.5) and (19.5,41.37) .. (19.5,38.75) -- cycle ;
            %Shape: Circle [id:dp27414489119446295] 
            \draw   (53.5,39.25) .. controls (53.5,36.63) and (55.63,34.5) .. (58.25,34.5) .. controls (60.87,34.5) and (63,36.63) .. (63,39.25) .. controls (63,41.87) and (60.87,44) .. (58.25,44) .. controls (55.63,44) and (53.5,41.87) .. (53.5,39.25) -- cycle ;
            %Shape: Circle [id:dp9531070893626458] 
            \draw  [fill={rgb, 255:red, 0; green, 0; blue, 0 }  ,fill opacity=1 ] (82.5,39.25) .. controls (82.5,36.63) and (84.63,34.5) .. (87.25,34.5) .. controls (89.87,34.5) and (92,36.63) .. (92,39.25) .. controls (92,41.87) and (89.87,44) .. (87.25,44) .. controls (84.63,44) and (82.5,41.87) .. (82.5,39.25) -- cycle ;
            %Shape: Circle [id:dp47792027123829517] 
            \draw  [fill={rgb, 255:red, 0; green, 0; blue, 0 }  ,fill opacity=1 ] (111.5,39.25) .. controls (111.5,36.63) and (113.63,34.5) .. (116.25,34.5) .. controls (118.87,34.5) and (121,36.63) .. (121,39.25) .. controls (121,41.87) and (118.87,44) .. (116.25,44) .. controls (113.63,44) and (111.5,41.87) .. (111.5,39.25) -- cycle ;
            %Straight Lines [id:da8378993715868889] 
            \draw  [dash pattern={on 4.5pt off 4.5pt}]  (6.83,19.5) -- (6.83,57.5) ;
            %Straight Lines [id:da45195601853513356] 
            \draw  [dash pattern={on 4.5pt off 4.5pt}]  (131.83,19) -- (131.83,57) ;
            %Curve Lines [id:da7365768075177336] 
            \draw    (29,38.75) .. controls (42.84,16.5) and (103.84,17.5) .. (111.5,39.25) ;
            %Straight Lines [id:da8119009737783229] 
            \draw    (92,39.25) -- (111.5,39.25) ;
            %Straight Lines [id:da5171064641434688] 
            \draw    (121,39.25) -- (140.5,39.25) ;
            %Curve Lines [id:da6311753002974606] 
            \draw    (63,39.25) .. controls (66.84,63.5) and (105.84,63.5) .. (111.5,39.25) ;

        \end{tikzpicture}
        & 

        \tikzset{every picture/.style={line width=0.75pt}} %set default line width to 0.75pt        
        
        \begin{tikzpicture}[x=0.75pt,y=0.75pt,yscale=-1,xscale=1]
        %uncomment if require: \path (0,80); %set diagram left start at 0, and has height of 80

            %Shape: Circle [id:dp21139696063800162] 
            \draw   (140.5,39.25) .. controls (140.5,36.63) and (142.63,34.5) .. (145.25,34.5) .. controls (147.87,34.5) and (150,36.63) .. (150,39.25) .. controls (150,41.87) and (147.87,44) .. (145.25,44) .. controls (142.63,44) and (140.5,41.87) .. (140.5,39.25) -- cycle ;
            %Shape: Circle [id:dp5809181959009715] 
            \draw  [fill={rgb, 255:red, 0; green, 0; blue, 0 }  ,fill opacity=1 ] (19.5,38.75) .. controls (19.5,36.13) and (21.63,34) .. (24.25,34) .. controls (26.87,34) and (29,36.13) .. (29,38.75) .. controls (29,41.37) and (26.87,43.5) .. (24.25,43.5) .. controls (21.63,43.5) and (19.5,41.37) .. (19.5,38.75) -- cycle ;
            %Shape: Circle [id:dp9480857030084567] 
            \draw  [fill={rgb, 255:red, 255; green, 255; blue, 255 }  ,fill opacity=1 ] (53.5,39.25) .. controls (53.5,36.63) and (55.63,34.5) .. (58.25,34.5) .. controls (60.87,34.5) and (63,36.63) .. (63,39.25) .. controls (63,41.87) and (60.87,44) .. (58.25,44) .. controls (55.63,44) and (53.5,41.87) .. (53.5,39.25) -- cycle ;
            %Shape: Circle [id:dp11365216742488393] 
            \draw  [fill={rgb, 255:red, 255; green, 255; blue, 255 }  ,fill opacity=1 ] (82.5,39.25) .. controls (82.5,36.63) and (84.63,34.5) .. (87.25,34.5) .. controls (89.87,34.5) and (92,36.63) .. (92,39.25) .. controls (92,41.87) and (89.87,44) .. (87.25,44) .. controls (84.63,44) and (82.5,41.87) .. (82.5,39.25) -- cycle ;
            %Shape: Circle [id:dp32474323947071315] 
            \draw  [fill={rgb, 255:red, 0; green, 0; blue, 0 }  ,fill opacity=1 ] (111.5,39.25) .. controls (111.5,36.63) and (113.63,34.5) .. (116.25,34.5) .. controls (118.87,34.5) and (121,36.63) .. (121,39.25) .. controls (121,41.87) and (118.87,44) .. (116.25,44) .. controls (113.63,44) and (111.5,41.87) .. (111.5,39.25) -- cycle ;
            %Straight Lines [id:da9134260005894232] 
            \draw  [dash pattern={on 4.5pt off 4.5pt}]  (6.83,19.5) -- (6.83,57.5) ;
            %Straight Lines [id:da8611636639133431] 
            \draw  [dash pattern={on 4.5pt off 4.5pt}]  (131.83,19) -- (131.83,57) ;
            %Curve Lines [id:da7355476068036243] 
            \draw    (29,38.75) .. controls (42.84,16.5) and (103.84,17.5) .. (111.5,39.25) ;
            %Straight Lines [id:da2705602820399352] 
            \draw    (92,39.25) -- (111.5,39.25) ;
            %Straight Lines [id:da0012872188154789965] 
            \draw    (121,39.25) -- (140.5,39.25) ;
            %Curve Lines [id:da8313847266884413] 
            \draw    (63,39.25) .. controls (66.84,63.5) and (105.84,63.5) .. (111.5,39.25) ;

        \end{tikzpicture}
        \\
        \hline 
        B' & 

        \tikzset{every picture/.style={line width=0.75pt}} %set default line width to 0.75pt        
        
        \begin{tikzpicture}[x=0.75pt,y=0.75pt,yscale=-1,xscale=1]
        %uncomment if require: \path (0,80); %set diagram left start at 0, and has height of 80

            %Shape: Circle [id:dp41304541885202584] 
            \draw   (140.5,39.25) .. controls (140.5,36.63) and (142.63,34.5) .. (145.25,34.5) .. controls (147.87,34.5) and (150,36.63) .. (150,39.25) .. controls (150,41.87) and (147.87,44) .. (145.25,44) .. controls (142.63,44) and (140.5,41.87) .. (140.5,39.25) -- cycle ;
            %Shape: Circle [id:dp2795623075522071] 
            \draw  [fill={rgb, 255:red, 255; green, 255; blue, 255 }  ,fill opacity=1 ] (19.5,38.75) .. controls (19.5,36.13) and (21.63,34) .. (24.25,34) .. controls (26.87,34) and (29,36.13) .. (29,38.75) .. controls (29,41.37) and (26.87,43.5) .. (24.25,43.5) .. controls (21.63,43.5) and (19.5,41.37) .. (19.5,38.75) -- cycle ;
            %Shape: Circle [id:dp8017877207273599] 
            \draw   (53.5,39.25) .. controls (53.5,36.63) and (55.63,34.5) .. (58.25,34.5) .. controls (60.87,34.5) and (63,36.63) .. (63,39.25) .. controls (63,41.87) and (60.87,44) .. (58.25,44) .. controls (55.63,44) and (53.5,41.87) .. (53.5,39.25) -- cycle ;
            %Shape: Circle [id:dp920430705623498] 
            \draw  [fill={rgb, 255:red, 0; green, 0; blue, 0 }  ,fill opacity=1 ] (82.5,39.25) .. controls (82.5,36.63) and (84.63,34.5) .. (87.25,34.5) .. controls (89.87,34.5) and (92,36.63) .. (92,39.25) .. controls (92,41.87) and (89.87,44) .. (87.25,44) .. controls (84.63,44) and (82.5,41.87) .. (82.5,39.25) -- cycle ;
            %Shape: Circle [id:dp037995598909518424] 
            \draw  [fill={rgb, 255:red, 0; green, 0; blue, 0 }  ,fill opacity=1 ] (111.5,39.25) .. controls (111.5,36.63) and (113.63,34.5) .. (116.25,34.5) .. controls (118.87,34.5) and (121,36.63) .. (121,39.25) .. controls (121,41.87) and (118.87,44) .. (116.25,44) .. controls (113.63,44) and (111.5,41.87) .. (111.5,39.25) -- cycle ;
            %Straight Lines [id:da6306112126703814] 
            \draw  [dash pattern={on 4.5pt off 4.5pt}]  (6.83,19.5) -- (6.83,57.5) ;
            %Straight Lines [id:da938534181838308] 
            \draw  [dash pattern={on 4.5pt off 4.5pt}]  (131.83,19) -- (131.83,57) ;
            %Curve Lines [id:da3284410262583931] 
            \draw    (29,38.75) .. controls (42.84,16.5) and (74.84,17.5) .. (82.5,39.25) ;
            %Straight Lines [id:da3789488996079132] 
            \draw    (92,39.25) -- (111.5,39.25) ;
            %Straight Lines [id:da739196244710115] 
            \draw    (121,39.25) -- (140.5,39.25) ;
            %Curve Lines [id:da3635901814221658] 
            \draw    (63,39.25) .. controls (66.84,63.5) and (105.84,63.5) .. (111.5,39.25) ;

        \end{tikzpicture}
        & &  \\
        \hline
    \end{tabular}

    \caption{$1$-real floor diagrams with divergence sequence $r_1-l$ contributing to $G^{1,2}_{(\textbf{d}^r;\textbf{d}^l),\textbf{c},0,1}(x_1,y_1,y_2,0)$ in $\mathbf{C}_2$.}
    \label{table5}
        
\end{table}\vspace*{-\baselineskip}

\begin{table}[H]
    \centering
        
    \begin{tabular}{|p{0.02\textwidth}|p{0.29\textwidth}|p{0.29\textwidth}|p{0.29\textwidth}|}
        \hline 
        & \begin{center}
            $\displaystyle 1$
        \end{center}
        & \begin{center}
            $\displaystyle 2$
        \end{center}
        \\
        \hline 
        C' & 

        \tikzset{every picture/.style={line width=0.75pt}} %set default line width to 0.75pt        
        
        \begin{tikzpicture}[x=0.75pt,y=0.75pt,yscale=-1,xscale=1]
        %uncomment if require: \path (0,80); %set diagram left start at 0, and has height of 80

            %Shape: Circle [id:dp27426503879009667] 
            \draw   (140.5,39.25) .. controls (140.5,36.63) and (142.63,34.5) .. (145.25,34.5) .. controls (147.87,34.5) and (150,36.63) .. (150,39.25) .. controls (150,41.87) and (147.87,44) .. (145.25,44) .. controls (142.63,44) and (140.5,41.87) .. (140.5,39.25) -- cycle ;
            %Shape: Circle [id:dp5851041911035323] 
            \draw  [fill={rgb, 255:red, 255; green, 255; blue, 255 }  ,fill opacity=1 ] (19.5,38.75) .. controls (19.5,36.13) and (21.63,34) .. (24.25,34) .. controls (26.87,34) and (29,36.13) .. (29,38.75) .. controls (29,41.37) and (26.87,43.5) .. (24.25,43.5) .. controls (21.63,43.5) and (19.5,41.37) .. (19.5,38.75) -- cycle ;
            %Shape: Circle [id:dp15234696352675736] 
            \draw   (53.5,39.25) .. controls (53.5,36.63) and (55.63,34.5) .. (58.25,34.5) .. controls (60.87,34.5) and (63,36.63) .. (63,39.25) .. controls (63,41.87) and (60.87,44) .. (58.25,44) .. controls (55.63,44) and (53.5,41.87) .. (53.5,39.25) -- cycle ;
            %Shape: Circle [id:dp03288385022117224] 
            \draw  [fill={rgb, 255:red, 0; green, 0; blue, 0 }  ,fill opacity=1 ] (82.5,39.25) .. controls (82.5,36.63) and (84.63,34.5) .. (87.25,34.5) .. controls (89.87,34.5) and (92,36.63) .. (92,39.25) .. controls (92,41.87) and (89.87,44) .. (87.25,44) .. controls (84.63,44) and (82.5,41.87) .. (82.5,39.25) -- cycle ;
            %Shape: Circle [id:dp9291802359329088] 
            \draw  [fill={rgb, 255:red, 0; green, 0; blue, 0 }  ,fill opacity=1 ] (111.5,39.25) .. controls (111.5,36.63) and (113.63,34.5) .. (116.25,34.5) .. controls (118.87,34.5) and (121,36.63) .. (121,39.25) .. controls (121,41.87) and (118.87,44) .. (116.25,44) .. controls (113.63,44) and (111.5,41.87) .. (111.5,39.25) -- cycle ;
            %Straight Lines [id:da8485307975441885] 
            \draw  [dash pattern={on 4.5pt off 4.5pt}]  (6.83,19.5) -- (6.83,57.5) ;
            %Straight Lines [id:da702557507548169] 
            \draw  [dash pattern={on 4.5pt off 4.5pt}]  (131.83,19) -- (131.83,57) ;
            %Curve Lines [id:da6590088318760626] 
            \draw    (29,38.75) .. controls (42.84,16.5) and (74.84,17.5) .. (82.5,39.25) ;
            %Straight Lines [id:da23319913487102417] 
            \draw    (92,39.25) -- (111.5,39.25) ;
            %Straight Lines [id:da840280564550687] 
            \draw    (121,39.25) -- (140.5,39.25) ;
            %Curve Lines [id:da06978917003213292] 
            \draw    (63,39.25) .. controls (66.84,63.5) and (105.84,63.5) .. (111.5,39.25) ;

        \end{tikzpicture}
        & 

        \tikzset{every picture/.style={line width=0.75pt}} %set default line width to 0.75pt        
        
        \begin{tikzpicture}[x=0.75pt,y=0.75pt,yscale=-1,xscale=1]
        %uncomment if require: \path (0,80); %set diagram left start at 0, and has height of 80

            %Shape: Circle [id:dp9919335809692724] 
            \draw   (140.5,39.25) .. controls (140.5,36.63) and (142.63,34.5) .. (145.25,34.5) .. controls (147.87,34.5) and (150,36.63) .. (150,39.25) .. controls (150,41.87) and (147.87,44) .. (145.25,44) .. controls (142.63,44) and (140.5,41.87) .. (140.5,39.25) -- cycle ;
            %Shape: Circle [id:dp8554069826682298] 
            \draw  [fill={rgb, 255:red, 255; green, 255; blue, 255 }  ,fill opacity=1 ] (19.5,38.75) .. controls (19.5,36.13) and (21.63,34) .. (24.25,34) .. controls (26.87,34) and (29,36.13) .. (29,38.75) .. controls (29,41.37) and (26.87,43.5) .. (24.25,43.5) .. controls (21.63,43.5) and (19.5,41.37) .. (19.5,38.75) -- cycle ;
            %Shape: Circle [id:dp06689658304931057] 
            \draw  [fill={rgb, 255:red, 0; green, 0; blue, 0 }  ,fill opacity=1 ] (48.5,38.75) .. controls (48.5,36.13) and (50.63,34) .. (53.25,34) .. controls (55.87,34) and (58,36.13) .. (58,38.75) .. controls (58,41.37) and (55.87,43.5) .. (53.25,43.5) .. controls (50.63,43.5) and (48.5,41.37) .. (48.5,38.75) -- cycle ;
            %Shape: Circle [id:dp29065543492436907] 
            \draw  [fill={rgb, 255:red, 255; green, 255; blue, 255 }  ,fill opacity=1 ] (82.5,39.25) .. controls (82.5,36.63) and (84.63,34.5) .. (87.25,34.5) .. controls (89.87,34.5) and (92,36.63) .. (92,39.25) .. controls (92,41.87) and (89.87,44) .. (87.25,44) .. controls (84.63,44) and (82.5,41.87) .. (82.5,39.25) -- cycle ;
            %Shape: Circle [id:dp006127876626027429] 
            \draw  [fill={rgb, 255:red, 0; green, 0; blue, 0 }  ,fill opacity=1 ] (111.5,39.25) .. controls (111.5,36.63) and (113.63,34.5) .. (116.25,34.5) .. controls (118.87,34.5) and (121,36.63) .. (121,39.25) .. controls (121,41.87) and (118.87,44) .. (116.25,44) .. controls (113.63,44) and (111.5,41.87) .. (111.5,39.25) -- cycle ;
            %Straight Lines [id:da0034386269262743907] 
            \draw  [dash pattern={on 4.5pt off 4.5pt}]  (6.83,19.5) -- (6.83,57.5) ;
            %Straight Lines [id:da7763519025433899] 
            \draw  [dash pattern={on 4.5pt off 4.5pt}]  (131.83,19) -- (131.83,57) ;
            %Straight Lines [id:da1651762102640456] 
            \draw    (92,39.25) -- (111.5,39.25) ;
            %Straight Lines [id:da2635793667467018] 
            \draw    (121,39.25) -- (140.5,39.25) ;
            %Curve Lines [id:da03067331710749488] 
            \draw    (58,38.75) .. controls (61.84,63) and (105.84,63.5) .. (111.5,39.25) ;
            %Straight Lines [id:da9431168687658705] 
            \draw    (29,38.75) -- (48.5,38.75) ;

        \end{tikzpicture}
        \\
        \hline
    \end{tabular}
    \caption{$1$-real floor diagrams with divergence sequence $r_2-l$ contributing to $G^{1,2}_{(\textbf{d}^r;\textbf{d}^l),\textbf{c},0,1}(x_1,y_1,y_2,0)$ in $\mathbf{C}_2$.}
    \label{table6}
        
\end{table}

\printbibliography

\end{document}